\newcommand{\IM}{\operatorname{\mathcal{M}}}
\newcommand{\BB}{\operatorname{\mathcal{B}}}
\newcommand{\EE}{\operatorname{\mathcal{E}}}
\newcommand{\LL}{\operatorname{\mathcal{L}}}
\newcommand{\IA}{\operatorname{\mathcal{A}}}
\newcommand{\Si}{\dot{S}}
\renewcommand{\H}{\mathcal{H}}
\newcommand{\cyl}{\operatorname{cyl}}
\newcommand{\eps}{\epsilon}
\newcommand{\CR}{\bar{\partial}}
\newcommand{\ev}{\operatorname{ev}}
\newcommand{\cst}{\operatorname{const}}
\newcommand{\CZ}{\operatorname{CZ}}
\newcommand{\QH}{\operatorname{QH}}
\newcommand{\HF}{\operatorname{HF}}
\newcommand{\HC}{\operatorname{HC}}
\newcommand{\CF}{\operatorname{CF}}
\newcommand{\SH}{\operatorname{SH}}
\newcommand{\IBL}{\operatorname{IBL}}
\renewcommand{\H}{\operatorname{H}}
\newcommand{\ft}{\operatorname{ft}}
\newcommand{\ind}{\operatorname{ind}}
\newcommand{\del}{\partial}
\newcommand{\RS}{\IR \times S^1}
\newcommand{\IC}{\operatorname{\mathbb{C}}}
\newcommand{\IZ}{\operatorname{\mathbb{Z}}}
\newcommand{\IQ}{\operatorname{\mathbb{Q}}}
\newcommand{\IR}{\operatorname{\mathbb{R}}}
\newcommand{\IN}{\operatorname{\mathbb{N}}}
\newcommand{\Coker}{\operatorname{Coker}}
\newcommand{\Ker}{\operatorname{Ker}}
\newcommand{\Q}{\operatorname{Q}}
\newcommand{\T}{\operatorname{\mathcal{T}}}
\renewcommand{\LL}{\operatorname{\mathcal{L}}}
\renewcommand{\P}{\operatorname{\mathcal{P}}}
\newtheorem{theorem}{Theorem}[section]
\newtheorem{proposition}[theorem]{Proposition}
\newtheorem{definition}[theorem]{Definition}
\newtheorem{lemma}[theorem]{Lemma}
\newtheorem{corollary}[theorem]{Corollary}
\newtheorem{remark}[theorem]{Remark}
\newtheorem{conjecture}[theorem]{Conjecture}
\title{Higher algebraic structures in \\Hamiltonian Floer theory}
\author{Oliver Fabert}
\thanks{O. Fabert, VU Amsterdam, The Netherlands. Email: oliver.fabert@gmail.com}
\begin{document}

\maketitle

\begin{abstract}
In this paper we show how the rich algebraic formalism of Eliashberg-Givental-Hofer's symplectic field theory (SFT) can be used to define higher algebraic structures in Hamiltonian Floer theory. Using the SFT of Hamiltonian mapping tori we show how to define a homotopy extension of the well-known Lie bracket and discuss how it can be used to prove the existence of multiple closed Reeb orbits. Furthermore we show how to define the analogue of rational Gromov-Witten theory in the Hamiltonian Floer theory of open symplectic manifolds. More precisely, we introduce a so-called cohomology F-manifold structure in Hamiltonian Floer theory and prove that it generalizes the well-known Frobenius manifold structure in rational Gromov-Witten theory. 
\end{abstract}

\tableofcontents
\markboth{O. Fabert}{Higher algebraic structures in Hamiltonian Floer theory} 

\section*{Introduction and summary}
The Floer theory of Hamiltonian symplectomorphisms is an important tool in symplectic geometry. Floer homology was invented by A. Floer to prove the Arnold conjecture about the number of symplectic fixed points and since then was improved to answer many other questions in symplectic geometry. Following M. Schwarz and P. Seidel (\cite{Sch}), there exists the so-called pair-of-pants product in Floer homology. Apart from the Arnold conjecture for degenerate Hamiltonians, it is used in proofs of the Conley conjecture and plays a crucial role in the definition of symplectic quasimorphisms. The applications of the pair-of-pants product build on its relation with the small quantum product of the underlying symplectic manifold. Following Piunikhin-Salamon-Schwarz (\cite{PSS}) there exists an ring isomorphism between Floer homology with its pair-of-pants product and the small quantum homology ring. On the other hand, the small quantum product only involves a very small part of rational Gromov-Witten theory, since it just counts holomorphic spheres with three marked points. In order to use the geometric information of all rational Gromov-Witten invariants, it is known, see \cite{MDSa}, that there also exists a big version of the quantum cup product, which recovers the full rational Gromov-Witten potential and is the main ingredient to equip the quantum homology with the structure of a Frobenius manifold. \\
    
Since it is the goal of this paper to show how the big quantum product and the corresponding Frobenius manifold structure generalize to Hamiltonian Floer theory, we quickly review the algebraic formalism of big quantum homology. First, in order to be able to work with a simpler Novikov field and to be able to prove transversality for all occuring moduli spaces, we assume thoughout this paper that $(M,\omega)$ is semimonotone in the sense that $\omega(A)=\tau\cdot c_1(A)$ for all $A\in\pi_2(M)$ with some fixed $\tau\geq 0$, see \cite{MDSa}. Note that this includes the case of monotone symplectic manifolds as well as all exact symplectic manifolds. Following (\cite{MDSa}, section 11.1) we let $\Lambda$ denote the universal Novikov field of all formal power series in the formal variable $t$ of even degree with rational coefficients $n_{\eps}\in\IQ$, $$\Lambda\,\ni\,\lambda=\sum_{\epsilon\in\IR} n_{\eps} t^{\eps}:\,\#\{\eps\leq c:\;n_{\eps}\neq 0\}<\infty\,\textrm{for all}\,c\in\IQ.$$ With this we define the quantum homology of $M$ to be its singular homology with coefficients in the Novikov field, $\QH^*(M)=\H^*(M)\otimes\Lambda$. Note that here and throughout the entire paper, every grading is to be understood in the $\IZ_2$-sense. \\

The big difference between singular and quantum homology lies in the fact that there is a quantum product on $\QH^*(M)$ which is defined by counting holomorphic spheres $$u:S^2\to M,\,\,\CR_J u=du+J(u)\cdot du\cdot i=0$$ in the symplectic manifold $M$ equipped with a $\omega$-compatible almost complex structure $J$. On the other hand, while the quantum cup product on $\QH^*(M)$ just involves moduli spaces of holomorphic spheres with three marked points, the rational Gromov-Witten potential $F$ of $(M,\omega)$ also takes into account spheres with more than three marked points. The moduli space $\IM_{r+1}(A)$ of holomorphic spheres  with $r+1$ additional marked points consists of tuples $(u,z_0,\ldots,z_{r-1},z_{\infty})$, where $u$ denotes a holomorphic sphere and $z_0,\ldots,z_{r-1},z_{\infty}$ are marked points on $S^2$. We assume that $(z_0,z_1,z_{\infty})=(0,1,\infty)$ which ensures that there are no nontrivial automorphisms of the sphere $S^2\backslash\{z_0,z_1,\ldots,z_{r-1},z_{\infty}\}$. Using the evaluation map $\ev=(\ev_0,\ldots,\ev_{r-1},\ev_{\infty}): \IM_{r+1}(A)\to M^{r+1}$ given by 
$$\ev(u,z_2,\ldots,z_{r-1}) \;=\; (u(0),u(1),u(z_2),\ldots,u(z_{r-1}),u(\infty)),$$ the Gromov-Witten potential of $(M,\omega)$ is defined as the generating function \\$F=F(q)$, $q=(q_1,\ldots,q_K)$ given by $$\sum_r \frac{1}{(r+1)!} \sum_{\alpha_0,\ldots,\alpha_{\infty}} \int_{\IM_{r+1}(A)} \bigwedge_{i=0}^{r-1}\ev_i^*\theta_{\alpha_i}\wedge\ev_{\infty}^*\theta_{\alpha_{\infty}}\cdot\; q_{\alpha_0}\cdot\ldots\cdot q_{\alpha_{\infty}}t^{c_1(A)}.$$ Here $(q_1,\ldots,q_K)$ are formal variables assigned to the basis of homology classes $\theta_1,\ldots,\theta_K\in H^*(M)$ with the parity $|q_{\alpha}|=|\theta_{\alpha}|\in\IZ_2$. Note that they can be viewed as coordinates of a linear space $\operatorname{Q}$ which is canonically isomorphic to $\QH^*(M)$ by identifying $\theta_{\alpha}\in\QH^*(M)$ with the unit vector $e_{\alpha}=(0,\ldots,1,\ldots,0)\in\operatorname{Q}$. \\

Employing gluing of holomorphic spheres one can show that the Gromov-Witten potential satisfies the so-called WDVV-equations, which in turn can be interpreted as associativity equation for a family of new products which now involves moduli spaces of holomorphic spheres with an arbitrary number of additional marked points and hence contains the full information of the  rational Gromov-Witten theory of $(M,\omega)$. The idea is to use triple derivatives of the Gromov-Witten potential to define a product $\star_q: T_q\operatorname{Q}\otimes T_q\operatorname{Q}\to T_q\operatorname{Q}$ on the tangent space at each $q\in \operatorname{Q}$ by
$$\frac{\del}{\del q_{\alpha_0}}\star_q\frac{\del}{\del q_{\alpha_1}} \;=\; \sum_{\alpha_{\infty},\beta} \eta_{\alpha_{\infty},\beta} \cdot \Bigl(\frac{\del^3 F}{\del q_{\alpha_0}\del q_{\alpha_1}\del q_{\beta}}\Bigr)(q)\cdot\; \frac{\del}{\del q_{\alpha_{\infty}}},$$ where $\eta_{\alpha_{\infty},\beta}$ denotes the Poincare pairing. Here observe that the tangent space $T_q\operatorname{Q}$ at each $q=(q_1,\ldots,q_K)$ is canonically isomorphic to the original space $\QH^*(M)$ by identifying $\del/\del q_{\alpha}$ with $\theta_{\alpha}$. The coefficient $(\del^3 F/\del q_{\alpha_0}\del q_{\alpha_1}\del q_{\beta})(q)$ is given by $$\sum_r \frac{1}{(r-2)!}\sum_{\alpha_2,\ldots,\alpha_{r-1},A} \int_{\IM_{r+1}(A)}\bigwedge_{i=0}^{r-1}\ev_i^*\theta_{\alpha_i}\wedge\ev_{\infty}^*\theta_{\beta}\cdot\; q_{\alpha_2}\cdot\ldots\cdot q_{\alpha_{r-1}}t^{c_1(A)}.$$ 

The new product is called the \emph{big quantum cup product} and is indeed a deformation of the quantum product $\star_0$ in the sense that we indeed have $\star_q=\star_0$ at $q=(q_1,\ldots,q_K)=0$ as $$\Bigl(\frac{\del^3 F}{\del q_{\alpha_0}\del q_{\alpha_1}\del q_{\beta}}\Bigr)(0)\;=\; \sum_A\int_{\IM_3(A)}\ev_0^*\theta_{\alpha_0}\wedge\ev_1^*\theta_{\alpha_1}\wedge\ev_{\infty}^*\theta_{\beta}\cdot\; t^{c_1(A)}.$$ Note that here we view the small quantum product as a product on the tangent space to $\Q$ at $q=0$, $$\star=\star_0: T_0\operatorname{Q}\otimes T_0\operatorname{Q}\to T_0\operatorname{Q}.$$ Algebraically the big quantum (cup) product can equivalently be viewed as product on the space of vector fields $\T^{(1,0)}\Q$ on $\Q$, $$\star: \T^{(1,0)}\Q\otimes \T^{(1,0)}\Q\to \T^{(1,0)}\Q,$$ that is, it is an element in the space $\T^{(1,2)}\Q$ of $(1,2)$-tensor fields on $\Q$. Note that by an $(r,s)$-tensor field on $\Q$ we mean a map which assigns to every $q\in\Q$ a $\Lambda$-linear map from the $s$-fold tensor product of $T_q\Q\cong\Q$ to its $r$-fold tensor product. We emphasize that the big quantum product is the key ingredient of the Frobenius manifold structure on $\operatorname{Q}\cong\QH^*(M)$ defined by Dubrovin, see \cite{MDSa}, \cite{DZ}.\\

In this paper we show how the big quantum product and the corresponding Frobenius manifold structure generalize to the Floer theory of a Hamiltonian symplectomorphism $\phi$, extending the relation between the small quantum product and the pair-of-pants product. We emphasize that, in contrast to the bulk deformations considered in \cite{FOOO}, we deform the pair-of-pants product by actually introducing additional periodic orbits and not just additional marked points. As we show with an example at the end of the paper, this leads to new applications in the case of open symplectic manifolds; \emph{indeed we emphasize that the case of symplectic manifolds with contact-type boundary is explicitly considered in this paper.} In contrast to the classical bulk deformations we show that the arising moduli spaces have new codimension-one boundary components which leads to an enriched algebraic formalism. In particular, in the case of symplectic manifolds with contact-type boundary, we emphasize that it is in general not possible to deform the pair-of-pants product using any linear combination of periodic orbits when at least one of the periodic orbits of the Hamiltonian corresponds to a closed Reeb orbit on the boundary. Indeed we show that this is only possible if it is a solution to the Maurer-Cartan equation for the $L_{\infty}$-structure in Hamiltonian Floer theory that we introduce first. In order to elegantly solve all appearing compactness problems, we show that the corresponding new structures in Hamiltonian Floer theory naturally emerge in the rich geometric and algebraic structures of the rational symplectic field theory of the mapping torus $M_{\phi}$ of $\phi$ (\cite{EGH},\cite{F1}). \\

Starting first with the case of closed symplectic manifolds $M$, as a first result we give a proof of the folk theorem relating the cylindrical contact homology $\HC^{\cyl}_*(M_{\phi})$ with the Floer homologies of powers of $\phi$.   

\begin{proposition} 
For every $k\in\IN$ the natural identification between the chain subspace $C^k_*\subset C_*$ of $\HC^{\cyl}_*(M_{\phi})$ generated by the $k$-periodic good Reeb orbits and the subspace of $\IZ_k$-invariant elements in the chain space $\CF_*(\phi^k)$  of $\HF_*(\phi^k)$ given by $$C^k_*\to\CF_*(\phi^k)^{\IZ_k},\;\gamma\mapsto\frac{1}{\kappa_{\gamma}}(x\pm\ldots\pm\phi^{k-1}(x))$$ is compatible with the boundary operators in cylindrical contact homology and Floer homology. Here $\kappa_{\gamma}$ denotes the multiplicity of $\gamma$ and the sign is determined by the behaviour of the orientation of the moduli spaces in cylindrical contact homology under the action of rotating the asymptotic markers. Together with the fact that $\HF_*(\phi^k)^{\IZ_k}=\HF_*(\phi^k)$ for a Hamiltonian symplectomorphism $\phi$, it follows that the cylindrical contact homology of the mapping torus $M_{\phi}$ is naturally isomorphic to the sum of the Floer homologies of all powers of $\phi$, $$\HC^{\cyl}_*(M_{\phi}) \;\cong\; \bigoplus_k \HF_*(\phi^k).$$ \end{proposition}

For the well-definedness of cylindrical contact homology one crucially uses the compactness result for holomorphic curves in cylindrical manifolds established in \cite{BEHWZ}. While this requires $M_{\phi}$ and hence $M$ to be closed, it is well-known that the Hamiltonian Floer homology groups can still be defined in the case of symplectic manifolds with contact-type boundary. For this one has to assume that Hamiltonian function extends to the completion in such a way that  it grow linearly of constant slope $m>0$ with the $\IR$-coordinate $\theta$ on the cylindrical end of $M$. Using the above relation between cylindrical contact homology and Hamiltonian Floer homology, we immediately get the cylindrical contact homology $\HC^{\cyl}_*(M_{\phi})$ is still well-defined and an invariant of the symplectic manifold, after fixing the asymptotic linear slope of the Hamiltonian in the cylindrical end, it is an invariant of the symplectic manifold. \\

After embedding Hamiltonian Floer theory into the framework of the symplectic field theory, we want to illustrate how the higher algebraic structures defined in \cite{EGH} lead to higher algebraic structures in Hamiltonian Floer theory.  \\

Following \cite{EGH} and \cite{F1}, the full contact homology of $M_{\phi}$ is defined as the homology of a chain complex where the differential is defined by counting unparametrized punctured holomorphic curves with one positive cylindrical end but an arbitrary number of negative cylindrical ends. In the same way as the cylindrical contact homology has an immediate interpretation in Hamiltonian Floer theory, the same is indeed true for the new algebraic structures arising from the more general moduli spaces. Instead of defining the chain complex of full contact homology, it is already sketched in \cite{EGH} that one alternatively one can use  the information of all moduli spaces to introduce a sequence of bracket-type operations on cylindrical contact homology and hence in Hamiltonian Floer theory. In the same way as the definition of cylindrical contact homology can be extended to the case of symplectic manifolds with contact-type boundaries, we show that the same holds for these operations. 

\begin{theorem}
By counting holomorphic curves with multiple cylindrical ends one can define an $L_{\infty}$-structure on the cylindrical contact homology and hence in Hamiltonian Floer theory. It is still well-defined when one passes from closed symplectic manifolds to symplectic manifolds with contact-type boundary. Furthermore, after fixing the asymptotic linear slope of the Hamiltonian in the cylindrical end, it is an invariant of the symplectic manifold up to homotopy.
\end{theorem}

We then show that the our $L_{\infty}$-structure indeed extends the well-known Lie bracket in Hamiltonian Floer theory, see \cite{A} and \cite{R}. For this we show 

\begin{proposition} The coefficients appearing in the definition of the $L_{\infty}$-structure count Floer solutions $u:\Si\to M$ in the sense (\cite{R}, 6.1) with one positive puncture with varying conformal structure and simultaneously rotating asymptotic markers. In particular, the above $L_{\infty}$-structure extends the Lie bracket in Hamiltonian Floer theory defined in (\cite{A}, 2.5.1) \end{proposition}

While it immediately follows that the pair-of-pants product defines a product on the cylindrical contact homology of $M_{\phi}$, $$\star_0: \HC^{\cyl}_*(M_{\phi}) \otimes \HC^{\cyl}_*(M_{\phi}) \to \HC^{\cyl}_*(M_{\phi}),$$ we define in this paper a \emph{big version of the pair-of-pants product} which now, in analogy with the big quantum product from rational Gromov-Witten theory, is supposed to be a product on vector fields. \\

As in the definition of big quantum homology, we start by introducing formal variables $q_{\gamma}$ for each closed Reeb orbit $\gamma$, the chain space $C_*=\bigoplus_k \CF_*(\phi^k)$ of the cylindrical contact homology can be identified with the tangent space $T_0\tilde{\Q}$ at zero of an infinite-dimensional linear coordinate space $\tilde{\Q}$ by identifying $\gamma\in C_*$ with $\del/\del q_{\gamma}\in T_0\tilde{\Q}$. While the (small) pair-of-pants product can be defined as a map $\star_0: T_0\tilde{\Q}\otimes T_0\tilde{\Q}\to T_0\tilde{\Q}$, the big version of the pair-of-pants product is supposed to provide us with a family of products $\star_q: T_q\tilde{\Q}\otimes T_q\tilde{\Q}\to T_q\tilde{\Q}$ on each tangent space, that is, a (1,2)-tensor field $\star\in\T^{(1,2)}\tilde{\Q}$. Since we require that the big pair-of-pants product in Hamiltonian Floer theory generalizes the big quantum product on $\QH^*(M)$, the vector field product $\star$ shall generalize the (small) pair-of-pants product in the same way as the big quantum product from rational Gromov-Witten theory generalizes the small quantum product on quantum homology.  \\

\begin{definition} On the chain level, the \emph{big pair-of-pants product} is defined to be the $(1,2)$-tensor field $\star\in\T^{(1,2)}\tilde{\Q}$ on the super space $\tilde{\Q}$ given by $$\sum_{\gamma^+,\gamma_0,\gamma_1}\Bigl(\sum_{\Gamma,A} \frac{1}{(r-2)!} \frac{1}{\kappa_{\gamma_0}\kappa_{\gamma_1}\kappa^{\Gamma}} \cdot\frac{1}{k}\cdot\#\IM^{\gamma^+}_{\gamma_0,\gamma_1}(\Gamma)\cdot \;t^{c_1(A)}q^{\Gamma}\Bigr)dq_{\gamma_0}\otimes dq_{\gamma_1}\otimes\frac{\del}{\del q_{\gamma^+}}$$ where $k=k_0+\ldots+k_{r-1}$ is the period of the closed orbit $\gamma^+$. \end{definition}

The new moduli spaces $\IM^{\gamma^+}_{\gamma_0,\gamma_1}(\Gamma)$ used to define the big pair-of-pants product can be identified with moduli spaces of Floer solutions $u:\Si\to M$ in the sense of \cite{R} with an arbitrary number of negative punctures and \emph{varying conformal structure and fixed asymptotic markers.} Note that they can be considered as an intermediate case between the moduli spaces used to define the $L_{\infty}$-structure and the moduli spaces of Floer solutions considered in \cite{R} and \cite{Sch} used to define the TQFT structure in Floer theory. \\

For the definition of the new moduli spaces we use that the fact that, in contrast the well-known case of contact manifolds, there exists a natural projection from $M_{\phi}\cong S^1\times M$ to the circle. Instead of fixing the asymptotic markers, we equivalently fix the induced branched covering map $h=(h_1,h_2):\Si\to\RS$. For this we introduce an additional marked point $z^*$ on the underlying punctured Riemann sphere $\Si=S^2\backslash\{z_0,\ldots,z_{r-1},\infty\}$, which we require to get mapped to $0\in S^1$. Of course, it still remains to constrain the additional marked point a priori without using the map. It is the key idea for our definition of the big pair-of-pants product that we can use the first two (special) negative punctures as well as the positive puncture to fix unique coordinates on $\Si$ by setting $z_0=0$, $z_1=1$.  \\

It is a consequence of a compactness problem for the moduli spaces of holomorphic curves that the big pair-of-pants product $\star$ does not descend to a well-defined vector field product on Floer homology directly. Indeed, since we allow the conformal structure to vary, in the codimension-one boundary of compactification of the moduli spaces there will not only appear an extra cylinder; instead the holomorphic curves typically break into two curves with multiple cylindrical ends. In analogy to the fact that the (small) pair-of-pants product satisfies properties like commutativity and associativity only after passing to cylindrical homology, we prove that the same true for the big pair-of-pants product. Since the big pair-of-pants product counts holomorphic curves with an arbitrary number of cylindrical ends, it is the key observation that, by passing from the small to the big pair-of-pants product, the cylindrical contact homology differential needs to be replaced by the differential of full contact homology. \\  

In order to formulate the main theorem, we observe that the boundary operator of full contact homology, that has been used to define the $L_{\infty}$-structure before, defines a vector field $\tilde{X}\in\T^{(1,0)}\tilde{\Q}$ on the super space $\tilde{\Q}$, $$\tilde{X}=\sum_{\gamma}\Bigl(\sum_{\Gamma,A} \frac{1}{r!}\frac{1}{\kappa^{\Gamma}}\sharp\IM^{\gamma}(\Gamma,A)/\IR\cdot\; q^{\Gamma} t^{c_1(A)}\Bigr)\frac{\del}{\del q_{\gamma}}\;\in\; \T^{(1,0)}\tilde{\Q}.$$ Furthermore, observe that, by using homotopy transfer, the tensor fields $\tilde{X}\in\T^{(1,0)}\tilde{\Q}$, $\tilde{\star}\in\T^{(1,2)}\tilde{\Q}$ on the chain level $\tilde{\Q}=\bigoplus_k \CF_*(\phi^k)$ define unique (up to homotopy) tensor fields $X\in\T^{(1,0)}\Q$, $\star\in\T^{(1,2)}\Q$ on homology $\Q=\bigoplus_k \HF_*(\phi^k)$.\\

\begin{theorem}
Together with the boundary operator from full contact homology, the big pair-of-pants product equips the sum of the Floer homology groups $\bigoplus_k \HF_*(\phi^k)$ with the structure of a cohomology F-manifold in such a way that, at the tangent space at zero, we recover the (small) pair-of-pants product on Floer homology. 
\end{theorem}

Cohomology F-manifolds are generalizations of Frobenius manifolds defined by Merkulov in \cite{M1}, \cite{M2}. Among other things, the vector field product $\star$ now just lives on a differential graded manifold instead of a graded linear space and one drops the requirement for an underlying potential. In other words, a cohomology F-manifold is a differential graded manifold $\Q_X:=(\Q,X)$ equipped with a graded commutative and associative product for vector fields $$\star: \T^{(1,0)}\Q_X\otimes\T^{(1,0)}\Q_X\to\T^{(1,0)}\Q_X.$$ In particular, since the vector field $X$ contains the same information as the $L_{\infty}$-structure that we have introduced before, we see that the algebraic formalism for the big pair-of-pants product heavily builds on the latter. \\

Following \cite{CS} and \cite{K}\footnote{In \cite{K} they are called (formal pointed) $Q$-manifolds}, a differential graded manifold is given by a pair of a graded linear space $\Q$ (more generally, a formal pointed graded manifold $\Q$) and a cohomological vector field $X$ on $\Q$. With the latter we mean a vector field $X\in \T^{(1,0)}\Q$ which satisfies $[X,X]=2X^2=0$ and $X(0)=0$. After lifting all structures by homotopy transfer from the chain space to homology, in our theorem the underlying graded linear space is cylindrical contact homology, i.e., the sum of the Floer cohomologies, $\Q=\HC_*^{\cyl}(M_{\phi})=\bigoplus_k \HF_*(\phi^k)$, while the cohomological vector field $X$ indeed encodes the $L_{\infty}$-structure that we have defined before. The crucial property of differential graded manifolds is that they have well-defined space of functions $\T^{(0,0)}\Q_X=H_*(\T^{(0,0)}\Q,X)$ and vector fields $\T^{(1,0)}\Q_X=H_*(\T^{(1,0)}\Q,[X,\cdot])$ (in \cite{M2} they are called the homology structure sheaf and the homology tangent sheaf, respectively), which in turn allows us to define arbitrary spaces of tensor fields $\T^{(r,s)}\Q_X$. \\
  
By extending the isomorphism proof for contact homology from \cite{EGH} and \cite{F1}, we show that, for different choices of auxiliary data like almost complex structures and Hamiltonian perturbations, the resulting cohomology F-manifolds are isomorphic in a canonical way. More precisely, we show

\begin{theorem} Fixing the asymptotic linear slope of the Hamiltonians in the cylindrical end, it follows that for different choices of auxiliary data like $S^1$-dependent Hamiltonian functions $H^{\pm}$ and $\omega$-compatible almost complex structures $J^{\pm}$, the resulting cohomology F-manifolds are isomorphic and we hence obtain a new invariant of the symplectic manifold with contact-type boundary. In the case when the symplectic manifold $M$ is closed, we recover the cohomology F-manifold structure on $\bigoplus_{k\in\IN}\QH^*(M)$ given by the big quantum product. \end{theorem}

After introducing these new structures in symplectic homology, we turn to applications and links to mirror symmetry. \\

Concerning applications, we show that the nontriviality of the $L_{\infty}$-structure in Hamiltonian Floer theory can be used to prove the existence of closed Reeb orbits. Let $\Delta$ denote the BV operator on Hamiltonian Floer homology as defined in \cite{A},\cite{R}. 

\begin{theorem}
If the $L_{\infty}$-structure on $\HC^{\cyl}_*(M_{\phi})=\bigoplus_k\HF_*(\phi^k)$  is not trivial, then there is at least one closed Reeb orbit on the contact-type boundary of $M$. If $M$ is Liouville and the $L_{\infty}$-structure is not trivial on $\Ker\Delta$, then there either exist two simple closed Reeb orbits or one homologically trivial Reeb orbit on the contact-type boundary of $M$.
\end{theorem}

In particular, it follows that the $L_{\infty}$-structure is trivial in the case when $M$ is closed. \\

Finally we outline how our newly defined algebraic structures on Hamiltonian Floer homology can be used to rigorously formulate a conjecture of Seidel in \cite{Se} on the relation between the quantum cohomology of the quintic three-fold and the Hamiltonian Floer homology of a divisor complement in it. \\

This paper is dedicated to the memory of my friend Alex Koenen who died in an hiking accident shortly before the first version of this paper was finished.    

\section{Cylindrical contact homology and Floer homology}

\noindent\emph{Floer theory for symplectomorphisms}\\

Let $(M,\omega)$ be a symplectic manifold and let $H:S^1\times M\to\IR$ be a time-dependent Hamiltonian. The resulting Hamiltonian symplectomorphism is the time-one map $\phi=\phi^1_H$ of the flow of the time-dependent symplectic gradient $X^H_t$ of $H_t=H(t,\cdot)$. In order to be able to prove transversality for all occuring moduli spaces, see the generalization of the results from \cite{F1} in the appendix, as well as to be able to work with a simpler Novikov field, we assume that $(M,\omega)$ is semimonotone in the sense that $\omega(A)=\tau\cdot c_1(A)$ for all $A\in\pi_2(M)$ with some fixed $\tau\geq 0$, see \cite{MDSa}. Note that this includes the case of monotone symplectic manifolds as well as all exact symplectic manifolds. Furthermore we assume that, after choosing Hamiltonian perturbations as in the appendix, all fixed points of the Hamiltonian symplectomorphism $\phi$ are nondegenerate, in particular, isolated. We first briefly review the definition of the Floer homology groups $\HF_*(\phi)$ of the Hamiltonian symplectomorphism $\phi=\phi^1_H$. For this we assume, until mentioned otherwise, that the symplectic manifold is closed. \\

Let $\P(\phi)$ denote the set of one-periodic orbits of the flow of $X^H_t$. Using the evaluation at $0\in S^1$, note that the one-periodic orbits $x: S^1\to M$ are in one-to-one correspondence with fixed points $p=\phi(p)$ of the Hamiltonian symplectomorphism $\phi$ via evaluation at the base point $0\in S^1$, $p=x(0)$. Unambiguously we will not distinguish between one-periodic orbits and the corresponding fixed point. Using the Conley-Zehnder index $\CZ(x)$ of $x$, we can view $x$ as a $\IZ_2$-graded object with grading $|x|=\CZ(x)$. For the definition of the Conley-Zehnder index, we follow \cite{EGH} and choose circles representing a basis of $H_1(M)$. Then one can choose for every one-periodic orbit $x$ a oriented spanning surface with boundary given by $x$ and a linear combination of the aforementioned circles. After fixing unitary trivializations of $TM$ along the circles, note that the spanning surface can be used to define a unique unitary trivialization of the pullback bundle $x^*TM$. While the latter clearly depends on the choice of the spanning surface, the parity of the Conley-Zehnder index is independent of this choice. Following (\cite{MDSa}, section 11.1) we let $\Lambda$ denote the universal Novikov ring of all formal power series in the formal variable $t$ of even degree with rational coefficients $n_{\eps}\in\IQ$, $$\Lambda\,\ni\,\lambda=\sum_{\epsilon\in\IR} n_{\eps} t^{\eps}:\,\#\{\eps\leq c:\;n_{\eps}\neq 0\}<\infty\,\textrm{for all}\,c\in\IQ.$$ Note that our choice of coefficients ensures that $\Lambda$ is indeed a field. With this we introduce the Floer chain groups $\CF_*(\phi)$ to be the $\IZ_2$-graded vector space spanned by all fixed points $x\in \P(\phi)$ with coefficients in the field $\Lambda$. \\

In order to define the boundary operator $\del: \CF_*(\phi)\to \CF^{*+1}(\phi)$, we start with choosing an $\omega$-compatible almost complex structure $J$ on $M$. Note that for the necessary regularity result in Floer homology it is sufficient to work with fixed $J$ as long as one is allowed to perturb the $S^1$-dependent Hamiltonian function; for details see the appendix. For two given fixed points $x^-,x^+\in \P(\phi)$, let $\IM_{x^-}^{x^+}(A)$ denote the moduli space of cylinders $u:\IR\times S^1\to M$ satisfying Floer's perturbed Cauchy-Riemann equation $$\CR_{J,H} u = \del_s u + J(u) \cdot (\del_t u - X^H_t(u)) = 0,$$ connecting the corresponding two one-periodic orbits in the sense that $u(s,t)\to x^{\pm}(t)$ as $s\to\pm\infty$ and representing the absolute homology class $A\in H_2(M)$. Furthermore it is important to observe that there is a natural $\IR$-action on this space and we assume that elements in $\IM_{x^-}^{x^+}$ are equivalence classes under this $\IR$-action. Note that for the latter we use that for the definition of the Conley-Zehnder index we have already chosen a spanning disk for every contractible closed orbit $x$. With this we define the boundary operator $\del: \CF_*(\phi)\to \CF^{*+1}(\phi)$ as $$\del x^- \;=\; \sum_{x^+,A} \# \IM_{x^-}^{x^+}(A) \cdot\; x^+ t^{c_1(A)},$$ where $\# \IM_{x^-}^{x^+}(A)$ denotes the algebraic count of elements in the moduli space of cylinders modulo $\IR$-shift in the case when $\ind(u)=1$ and is equal to zero otherwise. \\

In order to ensure that we always get a finite count, we use that $\IM^{x^+}_{x^-}(A)$ is compact when $\ind(u)=1$. On the other hand, when $\ind(u)=2$, $\IM^{x^+}_{x^-}(A)$ can be compactified to a one-dimensional moduli space with boundary given by $$\del^1 \IM^{x^+}_{x^-}(A) = \bigcup \IM^{x^+}_{x}(A^+) \times \IM^{x}_{x^-}(A^-), $$ where the union runs over all fixed points $x\in\P(\phi)$ with $\ind(u^+)=\ind(u^-)=1$ for $(u^+,u^-)\in \IM^{x^+}_{x}(A^+) \times \IM^{x}_{x^-}(A^-)$ and $A^++A^-=A$. Translating the above compactness result into algebra, we have shown that we indeed have $\del \circ \del = 0$, so that we can define the Floer homology groups as $$\HF_*(\phi) = H_*(\CF_*(\phi),\del).$$ Furthermore it can be shown that the homology groups for different choices of almost complex structures and Hamiltonian symplectomorphisms $\phi$ are isomorphic. In particular, when $\phi$ is Hamiltonian, then the Floer homology groups $\HF_*(\phi)$ are isomorphic to the quantum homology groups $\QH_*(M)$, which here are defined as the singular homology groups of $M$ with coefficients in the universal Novikov ring $\Lambda$ from above. \\

\noindent\emph{Cylindrical contact homology}\\

Now we review how Floer homology can be embedded into the framework of symplectic field theory, by giving a rigorous proof of a folk theorem. As above we first assume that the symplectic manifold $M$ is closed. We start with the observation that (parametrized) one-periodic Hamiltonian orbits $x: S^1\to M$ are in one-to-one correspondence with unparametrized one-periodic orbits $\gamma$ of the canonical vector field $\del_t$ on the corresponding mapping torus $M_{\phi}=\IR\times M/\{(t,p)\sim (t+1,\phi(x))\}$ by setting $\gamma: S^1\to M_{\phi}$, $\gamma(t)=(t,x)$ where $x$ is viewed as the corresponding fixed point. Following \cite{BEHWZ}, see also \cite{F1}, note that $M_{\phi}$ naturally carries a stable Hamiltonian structure in the sense of \cite{BEHWZ} given by $(\tilde{\omega}=\omega,\tilde{\lambda}=dt)$  with Reeb vector field $\tilde{R}=\del_t$. As described in \cite{F1}, the stable Hamiltonian manifold $M_{\phi}$ can be identified with $S^1\times M$ equipped with the $H$-dependent stable Hamiltonian structure $(\tilde{\omega}^H=\omega+dH_t\wedge dt,\tilde{\lambda}^H=dt)$ with Reeb vector field $\tilde{R}^H=\del_t+X^H_t$, where the underlying diffeomorphism between $M_{\phi}$ and $S^1\times M$ is given by the Hamiltonian flow, $S^1\times M\to M_{\phi}$, $(t,p)\mapsto (t,\phi^t_H(p))$. \\

Generalizing the one-to-one correspondence between (parametrized) orbits $x^{\pm}$ in $M$ and unparametrized orbits $\gamma^{\pm}$ in $M_{\phi}\cong S^1\times M$, one can show that the moduli space of (parametrized) Floer cylinders $\IM_{x^-}^{x^+}$ connecting $x^+$ and $x^-$ can be identified with the moduli space of unparametrized $\tilde{J}$-holomorphic cylinders in $\IR\times M_{\phi}\cong \RS\times M$ converging to $\{+\infty\}\times \gamma^+$ and $\{-\infty\}\times \gamma^-$ in the cylindrical ends. For this observe that the $\omega$-compatible almost complex structure $J$ on $(M,\omega)$ and the $S^1$-dependent Hamiltonian $H_t$ naturally defines a cylindrical almost complex structure $\tilde{J}=\tilde{J}^H$ on $\RS\times M$ in the sense of \cite{BEHWZ}, compatible with the stable Hamiltonian structure, by setting $\tilde{J}\del_s = \del_t+X^H_t$ and requiring that $\tilde{J}$ agrees with $J$ on $TM$, see \cite{F1} and \cite{BEHWZ}. Then an easy computation shows, see also (\cite{F1}, proposition 2.2 and 2.4), that unparametrized $\tilde{J}$-holomorphic maps $\tilde{u}:\IR\times S^1\to\IR\times S^1\times M$ with $\tilde{u}(s,t)\to(\pm\infty,\gamma^{\pm}(t))$ as $s\to\pm\infty$ are in one-to-one correspondence with connecting Floer cylinders $u\in\IM_{x^-}^{x^+}$. For this observe that $\tilde{u}$ can be written as a tuple $\tilde{u}=(h,u)$, where $u$ satisfies Floer's perturbed Cauchy-Riemann equation and $h$ is an automorphism of the cylinder which, after applying the inverse automorphism, we can always assume to be the identity. Note that the natural $\IR$-action on $\IM_{x^-}^{x^+}(A)$ corresponds to the natural $\IR$-symmetry on the space of $\tilde{J}$-holomorphic maps to the cylindrical almost complex manifold $\IR\times M_{\phi}$. \\

Following \cite{EGH}, the cylindrical contact homology $\HC^{\cyl}_*=\HC^{\cyl}_*(M_{\phi})$ of the mapping torus $M_{\phi}$ is the homology of a chain complex, $\HC^{\cyl}_*=H_*(C_*,\del)$, where the chain space $C_*$ is now defined to be the linear space generated by the closed unparametrized good orbits $\gamma$ of the Reeb vector field with coefficients in the universal Novikov ring $\Lambda$ from before, where for the definition of good orbits we refer to the discussion below. Note that the period of each closed orbit in $M_{\phi}\cong S^1\times M$ agrees with the degree of the map to the base circle and hence the chain space naturally splits, $C_*=\bigoplus_k C^k_*$, where $C^k_*$ is generated by the orbits of period $k\in\IN$. As before we work with a $\IZ_2$-grading given by $|\gamma|=\CZ(\gamma)$, where $\CZ(\gamma)$ denotes the Conley-Zehnder index for closed Reeb orbits defined in \cite{EGH}. The boundary operator $\del: C_*\to C_*$ is defined as $$\del \gamma^-\;=\;  \frac{1}{\kappa_{\gamma^-}}\cdot \sum_{\gamma^+,A}\# \IM_{\gamma^-}^{\gamma^+}(A) \cdot\; \gamma^+ t^{c_1(A)},$$ where $\kappa_{\gamma}$ denotes the multiplicity of the closed orbit $\gamma$, see \cite{EGH}. \\

Here $\IM^{\gamma^+}_{\gamma^-}=\IM^{\gamma^+}_{\gamma^-}(A)$ denotes the moduli space of unparametrized $\tilde{J}$-holomorphic cylinders $\tilde{u}:\IR\times S^1\to\IR\times M_{\phi}$ converging to $\gamma^+$ and $\gamma^-$ near the cylindrical ends, $\tilde{u}(s,t+\tau^{\pm})\to (\pm\infty,\gamma^{\pm}(kt))$ as $s\to\pm\infty$ for some $\tau^{\pm}\in S^1$ . For the latter observe that, although we now want to consider the orbits as unparametrized objects, in the original definition from \cite{EGH} one arbitrarily fixes a parametrization by choosing a special point on each closed Reeb orbit $\gamma$. Note that, as in the definition of the moduli spaces $\IM_{x^-}^{x^+}$ in Floer homology, there is a natural $\IR$-action on this space and we assume that elements in $\IM_{\gamma^-}^{\gamma^+}$ are equivalence classes under this $\IR$-action. \\

\noindent\emph{Cylindrical contact homology and Floer homology}\\

In order to provide a natural link between the chain complexes of cylindrical contact homology and Floer homology,  we will modify the original definition and choose on each $k$-periodic orbit not one but $k$ special points naturally given by the intersection of the orbit with the fibre over $\pi^{-1}(0)\subset M_{\phi}$ of the projection $M_{\phi}\to S^1$. In order to cure for the resulting overcounting, we assume that every special point comes with the rational weight $1/k$. Note that when $\gamma$ is multiply-covered then some of these special points might coincide and we sum the weights correspondingly; in particular, when $\gamma$ is a $k$-fold cover of a one-periodic orbit, then we agree with the original definition in \cite{EGH}. The $k$ special points in turn define $k$ asymptotic markers (directions) at each cylindrical end and we follow \cite{EGH} and assume that the moduli spaces $\IM^{\gamma^+}_{\gamma^-}$ are made up of $\tilde{J}$-holomorphic maps $\tilde{u}$ as above together with asymptotic markers at each cylindrical end up to reparametrization of the underlying cylinder. In contrast to the original definition in \cite{EGH}, note that our choices of special points on $\gamma^+$ (and $\gamma^-$) lead to a natural $\IZ_k(\times\IZ_k)$-action on $\IM^{\gamma^+}_{\gamma^-}$ and we assume that every unparametrized holomorphic cylinder with asymptotic markers in $\IM^{\gamma^+}_{\gamma^-}$ comes equipped with the weight given by the product of the weights assigned to the special points defining the asymptotic markers. \\ 

While the closed orbits of period one are in bijection with the fixed points $x$ in $\P(\phi)$, note that for general $k\in\IN$ the fixed points in $\P(\phi^k)$ are in $k$-to-one-correspondence with closed orbits of period $k$ when the underlying orbit is simple. For this observe that for every fixed point $x\in\P(\phi^k)$ the points $\phi(x),\ldots,\phi^{k-1}(x)$ are also fixed points of $\phi^k$ which induces a natural $\IZ_k$-action on the chain space $\CF_*(\phi^k)$.  Note that the Conley-Zehnder indices of $\phi^i(x)$ agree for all $i=0,\ldots,k-1$ by symmetry reasons, since the corresponding one-periodic orbits just differ by reparametrization and the spanning surface $u$ for $x$ naturally defines spanning surfaces for all $\phi^i(x)$. On the other hand, $x,\phi(x),\ldots,\phi^{k-1}(x)$ all represent the same unparametrized $k$-periodic Reeb orbit $\gamma$. Furthermore the Conley-Zehnder index of $\gamma$ defined in \cite{EGH} agrees with the Conley-Zehnder index of $x$ if we use the same unitary trivialization to define the index for $\gamma$.  More precisely, there is indeed a one-to-one correspondence between the $k$ fixed points and the $k$ special points that we have chosen on $\gamma$ above. While it is not hard to see from our discussion above that the Floer chain complex for $\phi$ is contained in the chain complex of the cylindrical contact homology of $M_{\phi}$, we now show that the full cylindrical contact homology has an interpretation in terms of the Floer homologies of all powers $\phi^k$  of the underlying Hamiltonian symplectomorphism $\phi=\phi^1_H$ (defined using the same $\omega$-compatible almost complex structure). Let $\CF_*(\phi^k)^{\IZ_k}\subset\CF_*(\phi^k)$ denote the subspace of $\IZ_k$-invariant elements. By symmetry reason it follows that the boundary operator restricts to a boundary operator $\del: \CF_*(\phi^k)^{\IZ_k}\to\CF^{*+1}(\phi^k)^{\IZ_k}$.

\begin{proposition}\label{cylindrical}
For every $k\in\IN$ the natural identification between the chain subspace $C^k_*\subset C_*$ generated by the $k$-periodic good Reeb orbits and the subspace of $\IZ_k$-invariant elements in $\CF_*(\phi^k)$ given by $$C^k_*\to\CF_*(\phi^k)^{\IZ_k},\;\gamma\mapsto\frac{1}{\kappa_{\gamma}}(x\pm\ldots\pm\phi^{k-1}(x))$$ is compatible with the boundary operators in cylindrical contact homology and Floer homology. Here $\kappa_{\gamma}$ denotes the multiplicity of $\gamma$ and the sign is determined by the behaviour of the orientation of the moduli spaces in cylindrical contact homology under the action of rotating the asymptotic markers. Together with the fact that $\HF_*(\phi^k)^{\IZ_k}=\HF_*(\phi^k)$ for a Hamiltonian symplectomorphism $\phi$, it follows that the cylindrical contact homology of the mapping torus $M_{\phi}$ is naturally isomorphic to the sum of the Floer homologies of all powers of $\phi$, $$\HC^{\cyl}_*(M_{\phi}) \;\cong\; \bigoplus_k \HF_*(\phi^k).$$ \end{proposition}

\begin{proof} 
The proof for $k=1$ is already given above, since have shown that connecting Floer cylinders in $\IM^{x^+}_{x^-}$ are in one-to-one correspondence with unparametrized cylinders in $\IM^{\gamma^+}_{\gamma^-}$. For the case when $k$ is an arbitary natural number, observe first that the moduli space $\IM^{\gamma^+}_{\gamma^-}$ is only non-empty when $\gamma^+$ and $\gamma^-$ have the same period $k$ by homological reasons. It again follows from (\cite{F1},proposition 2.2), see also (\cite{F1}, proposition 2.4), that $\tilde{u}=(h,u):\RS\to\IR\times M_{\phi}\cong\RS\times M$ is a $\tilde{J}$-holomorphic cylinder precisely when $h:\RS\to\RS$ is holomorphic and $u:\RS\to M$ satisfies the Floer equation $\CR_{J,H,h}(u)=\Lambda^{0,1}_J(du+X^H_{h_2}\otimes dh_2)=0$. When $\tilde{u}$ represents an element in $\IM^{\gamma^+}_{\gamma^-}$ with $k$-periodic orbits, then it follows that $h$ is a $k$-fold unbranched covering map from the cylinder to itself, which in turn implies that $u$ satisfies the Floer equation for the pair $(J,H^k)$ with the $1/k$-periodic Hamiltonian $H^k_t=kH_{kt}$. After applying an automorphism of the domain, note that for every $\tilde{u}\in\IM^{\gamma^+}_{\gamma^-}$ we can always assume that the induced covering map $h:\RS\to\RS$ is given by $h(s,t)=(ks,kt)$. \\

After fixing the $k$-fold covering map $h$ using the action of the automorphism group, note that there still remains a $\IZ_k$-action. In analogy to the relation between closed orbits and fixed points, it follows that there is a $k$-to-one correspondence between unparametrized $\tilde{J}$-holomorphic cylinders $\tilde{u}$ and cylinders $u:\RS\to M$ satisfying the Floer equation for $(J,H^k)$ given by reparametrization of the underlying cylinder. In particular, we have $$\#\IM^{\gamma^+}_{\gamma^-} \;=\; \frac{1}{k} \sum_{i^{\pm}=0}^{k-1} \pm\#\IM^{\phi^{i^+}(x^+)}_{\phi^{i^-}(x^-)}$$ in case that $x^{\pm},\ldots,\phi^{k-1}(x^{\pm})$ represents the orbit $\gamma^{\pm}$, where the sign results from the behaviour of the orientation of the moduli space $\IM^{\gamma^+}_{\gamma^-}$ defined in (\cite{BM}, section $3$) under the action of rotating the asymptotic markers, see (\cite{BM}, section $5$). For this observe that, just as one needs to fix an orientation on the unstable manifold of every critical point in order to orient the moduli spaces of gradient flow lines in Morse homology, here one needs to make a similar choice for every periodic orbit in order to be able to orient all occuring moduli spaces of cylinders in a coherent way. Note that when $\gamma^+$ or $\gamma^-$ is multiply-covered with multiplicity $\kappa_{\gamma^{\pm}}$ and hence some of the fixed points $x^{\pm},\ldots,\phi^{k-1}(x^{\pm})$ agree, we still need to count them as different, since for each holomorphic cylinder in $\IM^{\gamma^+}_{\gamma^-}$ there are now $\kappa_{\gamma^{\pm}}$ possible directions for the asymptotic marker. On the other hand, in the same way as the closed one-periodic orbits $x_0,\ldots,x_{k-1}:S^1\to M$ corresponding to fixed points $x^{\pm},\ldots,\phi^{k-1}(x^{\pm})$ are obtained by reparametrization, $x_i(t)=x(t+i/k)$,  the moduli space $\IM^{x^+}_{x^-}$ from Floer homology is naturally isomorphic to the moduli space $\IM^{\phi^i(x^+)}_{\phi^i(x^-)}$ for all $0\leq i\leq k-1$ via reparametrization. It follows that $$\sum_{i^-=0}^{k-1}\pm\#\IM^{x^+}_{\phi^{i^-}(x^-)}\;=\;\sum_{i^-=0}^{k-1}\pm\#\IM^{\phi^{i^+}(x^+)}_{\phi^{i^-}(x^-)}$$ for all $0\leq i^+\leq k-1$. Together with the above identity we find that $$\#\IM^{\gamma^+}_{\gamma^-} \;=\; \sum_{i^-=0}^{k-1} \pm\#\IM^{x^+}_{\phi^{i^-}(x^-)}.$$  

Using this we can show that the chain map $C^k_*\to\oplus_k \CF_*(\phi^k)^{\IZ_k}$, $\gamma\mapsto \frac{1}{\kappa_{\gamma}}(x\pm\ldots\pm\phi^{k-1}(x))$ has the desired property. Note in particular that, when $\gamma$ is a bad orbit in the sense of \cite{EGH}, then the alternating sum on the right-hand-side gives zero, for more details see again (\cite{BM}, section $5$). It then follows that with respect to the above identification the differential $\del: \oplus_k \CF_*(\phi^k)^{\IZ_k}\to\oplus_k \CF_*(\phi^k)^{\IZ_k}$ in Floer homology agrees with the differential in cylindrical contact homology, 
\begin{eqnarray*} 
&&\del\Big(\frac{1}{\kappa_{\gamma^-}}(x^-\pm\ldots\pm\phi^{k-1}(x^-))\Big) \\&&=\; \frac{1}{\kappa_{\gamma^-}}\cdot \sum_{x^+,A} \Big(\sum_{i=0}^{k-1} \pm\#\IM^{x^+}_{\phi^i(x^-)}(A)\Big)\cdot\; x^+ t^{c_1(A)} \\&&=\; \frac{1}{\kappa_{\gamma^-}}\cdot \sum_{x^+,A} \pm\#\IM^{\gamma^+}_{\gamma^-}(A)\cdot\, x^+ t^{c_1(A)} \\&&=\; \frac{1}{\kappa_{\gamma^-}}\cdot \sum_{\gamma^+,A} \#\IM^{\gamma^+}_{\gamma^-}(A)\cdot\,\frac{1}{\kappa_{\gamma^+}}(x^+\pm\ldots\pm\phi^{k-1}(x^+)) t^{c_1(A)}. 
\end{eqnarray*} 
Finally, for Hamiltonian symplectomorphisms with sufficiently $C^2$-small Hamiltonian (depending on $k$) note that all fixed points of $\phi^k$ correspond to critical points of the underlying Hamiltonian and hence are already fixed points of $\phi$. It follows that $\HF_*(\phi^k)^{\IZ_k}=\HF_*(\phi^k)=\QH_*(M)$ for such small Hamiltonian symplectomorphisms. From the invariance properties of Floer homology we then get $\HF_*(\phi^k)^{\IZ_k}\cong\QH_*(M)\cong\HF_*(\phi^k)$ and hence $\HF_*(\phi^k)^{\IZ_k}=\HF_*(\phi^k)$ for all Hamiltonian symplectomorphisms. \end{proof}    

\noindent\emph{Generalization to symplectic manifolds with contact-type boundary}\\

After establishing the relation between cylindrical contact homology and Hamiltonian Floer homology for closed symplectic manifolds $M$, we now turn to the case of symplectic manifolds with contact-type boundary. A symplectic manifold is said to have contact-type boundary if in the neighborhood of the boundary there exists a vector field $Z$, called Liouville vector field, which is pointing outward and is transverse to the boundary $\del M$ and satisfies $\mathcal{L}_Z\omega=\omega$, where $\mathcal{L}$ denotes the Lie derivative. Defining the one-form $\lambda$ on $\del M$ by $\lambda=\iota(Z)(\omega)$, an easy exercise shows that $\lambda$ is a contact form in the sense that $\lambda\wedge(d\lambda)^{n-1}\neq 0$ with $2n=\dim M$. \\

Every symplectic manifold with contact-type boundary possesses a so-called completion $M\cup \IR^+\times \del M$, where $M$ and the cylindrical end $\IR^+\times \del M$ are glued along $(\{0\}\times)\del M$ by requiring that the Liouville vector field $Z$ agrees with the $\IR$-direction $\del_s$ on $\IR^+\times M$. In this paper we will not distinguish between the manifold with boundary and its completition, which we will also sometimes call it an open symplectic manifold with cylindrical end. A special example is a Liouville manifold which is a tuple $(M,\lambda)$ of an open manifold $M$ and a one-form $\lambda$ on $M$ such that $(M,\omega=d\lambda)$ is an exact symplectic manifold with a cylindrical end, where the globally defined vector field $Z$ is determined by $\lambda=\iota(Z)(\omega)$. Finally, in order to be able to study holomorphic curves in $M$, we assume that the completed manifold is equipped with a $\omega$-compatible almost complex structure $J$ which is cylindrical in $\IR^+\times M$ in the sense of \cite{BEHWZ}, in particular, $JZ$ is tangent to $\del M$ in the cylindrical end. \\

For the well-definedness of cylindrical contact homology one crucially uses the compactness result for holomorphic curves in cylindrical manifolds established in \cite{BEHWZ} which itself crucially relies on the fact that $M_{\phi}$ and hence $M$ is closed. On the other hand, it is well-known, see \cite{R}, that Hamiltonian Floer homology can be generalized to open symplectic manifolds with cylindrical ends, after requiring that the Hamiltonian function $H:S^1\times M\to\IR$ is of a special form on the cylindrical end. To this end, we follow \cite{R} and assume from now on that the underlying Hamiltonian function $H: S^1\times M\to\IR$ has \emph{asymptotic linear growth} of some slope $m>0$ in the sense that there exists some $\theta_0>0$ such that, when $x=(\theta,p)\in\IR^+\times\del M$ is a point in the cylindrical end, we have $H(t,\theta,p)=m\cdot \theta$ for $\theta\geq\theta_0$. \\

Since the $C^0$-bound for the Floer trajectory $u$ from (\cite{R}, lemma 2.1) immediately gives a $C^0$-bound for the map $\tilde{u}: \IR\times S^1\to \IR\times M_{\phi}$ ensuring that its image stays in a compact subset of the open mapping torus $M_{\phi}$, it follows that the compactness result for holomorphic curves from \cite{BEHWZ} still hold and hence the cylindrical contact homology of the mapping torus $M_{\phi}$ is still well-defined. \\

\begin{proposition} 
For a symplectic manifold with contact-type boundary and a Hamiltonian symplectomorphism $\phi$ of the special form described above, that is, where the underlying Hamiltonian has asymptotic linear growth, the cylindrical contact homology $\HC^{\cyl}_*(M_{\phi})$ is still well-defined and given by the sum of the Floer homologies $\HF_*(\phi^k)$. Fixing the linear slope of the Hamiltonian in the cylindrical end, it is an invariant of the symplectic manifold with boundary. \end{proposition}

\begin{proof}
It just remains to be shown that for two different choices of $\omega$-compatible almost complex structures $J^+$ and $J^-$ and Hamiltonian functions $H^+$ and $H^-$ \emph{with the same asymptotic slope $m>0$} the resulting cylindrical contact homologies $\HC^{\cyl}_*(M_{\phi^+})$ and $\HC^{\cyl}_*(M_{\phi^-})$ are still isomorphic. \\

For this recall from \cite{EGH} that there is a continuation map $\varphi: \HC^{\cyl}_*(M_{\phi^+})\to \HC^{\cyl}_*(M_{\phi^-})$ which is defined by counting holomorphic cylinders in $\RS\times M$ equipped with a $\IR$-dependent almost complex structure $\hat{J}$. It is explicitly determined by $\hat{J}=J_s$ on $TM$ and $\hat{J}\del_s=\del_t+X^H_{s,t}$. Here $J_s$ and $H_{s,t}$ are $\IR$-dependent families of $\omega$-compatible almost complex structures and $S^1$-dependent Hamiltonians which interpolate between the tuples $(J^+,H^+_t)$ and $(J^-,H^-_t)$ in the sense that $J_s=J^+$, $H_{s,t}=H^+_t$ for $s>s_0$ and $J_s=J^-$, $H_{s,t}=H^-_t$ for $s<-s_0$ (for some fixed $s_0>1$). Denoting $J^k_s=J_{ks}$, $H^k_{s,t}:=kH_{ks,kt}$, one can show as before that each (unparametrized) $\hat{J}$-holomorphic cylinder $\hat{u}$ in $\RS\times M$ connecting two Reeb orbits can be identified with a cylinder $u:\RS\to M$ satisfying the $\IR$-dependent Floer equation $\CR_{J^k,H^k}(u)=\Lambda^{0,1}_{J^k}(du+X^{H^k}_{s,t}\otimes dt)=0$ for some $k\in\IN$, see (\cite{F1}, theorem 5.2). \\

It follows that each of the natural continuation maps $\varphi$ corresponds to a family of continuation maps on the Floer homologies $\varphi^k: \HF_*((\phi^+)^k)\to \HF_*((\phi^-)^k)$ for all $k\in\IN$. On the other hand, assuming the $\IR$-dependent interpolating Hamiltonian $H_{s,t}:M\to\IR$, $(s,t)\in\RS$ is again chosen to have linear slope with respect to $\theta\in\IR$ in the cylindrical end, we can again employ the $C^0$-bounds from \cite{R} to show that the relevant compactness results for holomorphic curves in cobordisms from \cite{BEHWZ} still hold. \\ 

In order to see that the $\IZ_k$-invariant part  of the Floer homology group of $\phi^k$ agrees with $\HF_*(\phi^k)$ as in the case of Hamiltonian symplectomorphisms on closed symplectic manifolds, we follow \cite{BO} and assume that the $S^1$-dependent Hamiltonians $H^k_t$ defining $\phi^k$ for each $k\in\IN$ are obtained as a small perturbation of a $S^1$-independent Hamiltonian $H=H_0: M\to\IR$ using a small Morse function on each closed orbit of $H$. Identifying each $k$-periodic orbit of $H$ with the circle using an appropriate parametrization, we assume that each Morse function is $1/k$-periodic to obtain the required $\IZ_k$-symmetry on the set of generators of the chain complex underlying $\HF_*(\phi^k)$. On the other hand, it can be directly seen from the definition of the Morse-Bott differential in \cite{BO} that all perturbed orbits represent the same generator on homology. \end{proof}

\section{$L_{\infty}$-structure in Hamiltonian Floer theory}

After embedding Hamiltonian Floer homology into the framework of the symplectic field theory of Hamiltonian mapping tori, we want to illustrate how the higher algebraic structures defined in \cite{EGH} lead to higher algebraic structures in Hamiltonian Floer theory. We again start with the case where the symplectic manifold $(M,\omega)$ is closed. \\

\noindent\emph{Contact homology}\\

In order to use the algebraic formalism of \cite{EGH}, let us first introduce for every good closed Reeb orbit $\gamma$ in $M_{\phi}$ a formal variable $q_{\gamma}$ with the same $\IZ_2$-grading, $|q_{\gamma}|=|\gamma|$. Note that they can be viewed as coordinates of an abstract graded linear space $\tilde{\Q}$ over $\Lambda$ which is canonically isomorphic to the chain space of cylindrical contact homology of $M_{\phi}$ by identifying the generator $\gamma$ in the chain space with the unit vector $e_{\gamma}$ in $\tilde{\Q}$. Alternatively, observing that the tangent space $T_0\tilde{\Q}\cong\tilde{\Q}$ of $\tilde{\Q}$ (at $0\in\tilde{\Q}$) is spanned by the vectors $\del/\del q_{\gamma}$ with $|\del/\del q_{\gamma}|=|q_{\gamma}|=|\gamma|$, we can equivalenty identify $\gamma$ with $\del/\del q_{\gamma}$.  In any case, we will freely jump between both pictures. \\

Following \cite{EGH} and \cite{F1}, the \emph{full} contact homology of $M_{\phi}$ is defined as the homology of the chain complex $$\HC_*(M_{\phi})=H_*(\T^{(0,0)}\tilde{\Q},\tilde{X}).$$ Here the chain space $\T^{(0,0)}\tilde{\Q}$ denotes the algebra of polynomial functions (=$(0,0)$-tensor fields) on the chain space $\tilde{\Q}$ with values in $\Lambda$, which alternatively (as done in \cite{EGH}) can be described as the graded commutative algebra spanned by the formal variables $q_{\gamma}$ over the Novikov ring $\Lambda$. It follows that, as a graded linear space over $\Lambda$, it is spanned by monomials $q^{\Gamma}:=q_{\gamma_1}\cdot\ldots\cdot q_{\gamma_{\ell}}$ for all finite collections of closed Reeb orbits $\Gamma=(\gamma_1,\ldots,\gamma_{\ell})$.\\

On the other hand, the space $\T^{(1,0)}\tilde{\Q}$ of vector fields on $\tilde{\Q}$ is spanned, as a linear space over $\Lambda$, by formal products of the form $q^{\Gamma}\cdot \del/\del q_{\gamma^+}$. Note that every element in $\T^{(1,0)}\tilde{\Q}$ defines a linear map from the space of functions $\T^{(0,0)}\tilde{\Q}$ into itself, by viewing it as a derivation satisfying a graded version of the Leibniz rule. This said, the boundary operator in full contact homology defined in \cite{EGH} can be encoded in the vector field $$\tilde{X}=\sum_{\gamma^+}\Bigl(\sum_{\Gamma,A} \frac{1}{r!}\frac{1}{\kappa^{\Gamma}}\sharp\IM^{\gamma^+}(\Gamma,A)\cdot\; q^{\Gamma} t^{c_1(A)}\Bigr)\frac{\del}{\del q_{\gamma^+}}\;\in\; \T^{(1,0)}\tilde{\Q}$$ on $\tilde{\Q}$, called \emph{cohomological vector field}, defined by counting unparametrized punctured $\tilde{J}$-holomorphic curves with one positive cylindrical end but an arbitrary number of negative cylindrical ends. \\

For every closed unparametrized orbit $\gamma^+$ (of period $k\in\IN$) and every ordered set of closed unparametrized orbits $\Gamma=(\gamma_0,\ldots,\gamma_{r-1})$ (of periods $k_0,\ldots,k_{r-1}$ with $k_0+\ldots+k_{r-1}=k$) of the Reeb vector field on $M_{\phi}\cong S^1\times M$, the moduli space $\IM^{\gamma^+}(\Gamma)=\IM^{\gamma^+}(\Gamma,A)$ consists of equivalence classes of tuples $(\tilde{u},z_0,\ldots,z_{r-1})$ together with an asymptotic marker (direction) at each $z_i$, where $(z_0,\ldots,z_{r-1})$ is a collection of marked points on $\IC= S^2\backslash\{\infty\}$ and $\tilde{u}=(h,u):\Si\to\IR\times M_{\phi}\cong\RS\times M$ is a $\tilde{J}$-holomorphic map from the resulting punctured sphere $\Si = \IC\backslash\{z_0\ldots,z_{r-1}\} = S^2\backslash\{z_0\ldots,z_{r-1},z_{\infty}=\infty\}$ to the cylindrical almost complex manifold $\IR\times M_{\phi}$. For the asymptotics we require that in compatible cylindrical coordinates $(s^+,t^+)$ near $z_{\infty}$ and $(s_i,t_i)$ near $z_i$ that $\tilde{u}(s^+,t^+)\to (+\infty,\gamma^+(kt^+))$ as $s^+\to +\infty$ and $\tilde{u}(s_i,t_i)\to (-\infty,\gamma_i(k_i t_i))$ as $s_i\to -\infty$ for all $i=0,\ldots,r-1$. \\

Note that the asymptotic markers (and hence the cylindrical coordinates) at each puncture are not fixed, but, as in the definition of cylindrical contact homology, they are fixed by special points on the closed Reeb orbits. For the latter observe that, although we want to consider the orbits as unparametrized objects, in the original definition from \cite{EGH} one arbitrarily fixes a parametrization by choosing a special point on each closed Reeb orbit $\gamma$. In order to provide a natural link between the chain complexes of cylindrical contact homology and Floer homology, note that we have modified the original definition and now choose on each $k$-periodic orbit not one but $k$ special points, naturally given by the intersection(s) of the orbit with the fibre over $\{0\}\times M\subset S^1\times M\cong M_{\phi}$. In order to cure for the resulting overcounting, we assume that every special point comes with the rational weight $1/k$. Note that when $\gamma$ is multiply-covered then some of these special points might coincide and we sum the weights correspondingly; in particular, when $\gamma$ is a $k$-fold cover of a one-periodic orbit, then we agree with the original definition in \cite{EGH}. The $k$ special points in turn define $k$ asymptotic markers (directions) at each cylindrical end. \\

As for cylindrical contact homology we consider unparametrized $\tilde{J}$-holomorphic curves and assume that elements in the moduli space $\IM^{\gamma^+}(\Gamma)$ are equivalence classes under the obvious action of the group of Moebius transformations on $\IC=S^2\backslash\{\infty\}$ and the natural $\IR$-shift on the cylindrical target manifold. Furthermore we assume, as before, that they are equipped with a rational weight given by the product of the rational weights of the special marked points defining the asymptotic markers. Note further that in the appendix we show how the results in \cite{F1} can be modified to achieve transversality for all moduli spaces $\IM^{\gamma^+}(\Gamma)$ using domain-dependent Hamiltonian perturbations. Finally it can be shown that to each $\tilde{J}$-holomorphic curve in $\IM^{\gamma^+}(\Gamma)$ one can still assign a class $A\in H_2(M)$. When $\Gamma$ consists of a single orbit $\gamma^-$, then we just get back the moduli spaces of cylindrical contact homology from before, $\IM^{\gamma^+}(\gamma^-)=\IM^{\gamma^+}_{\gamma^-}$. \\ 

\begin{center}
\includegraphics[height=6cm]{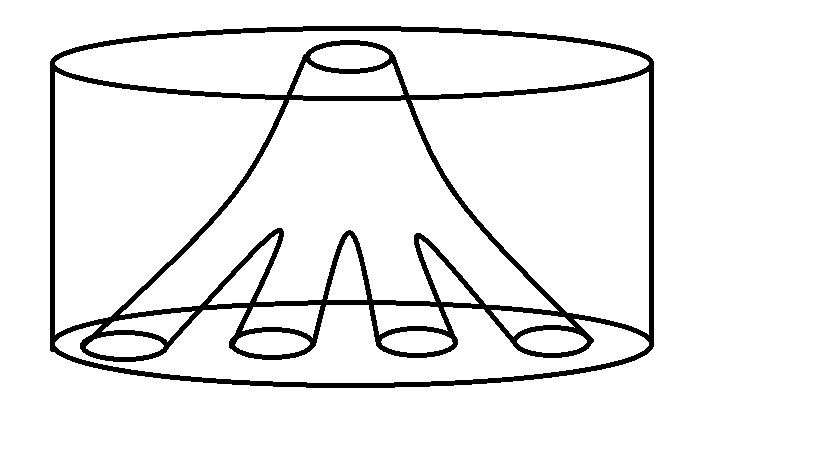}\\
\small{Punctured holomorphic curve in the cylindrical manifold $\IR\times M_{\phi}$}
\end{center}

It is shown in \cite{BEHWZ}, see also \cite{F1}, that the moduli space $\IM^{\gamma^+}(\Gamma)=\IM^{\gamma^+}(\Gamma,A)$ is compact when the index is one and, when the index is two, can be compactified to a one-dimensional moduli space with boundary $$\del^1 \IM^{\gamma^+}(\Gamma) = \bigcup \IM^{\gamma^+}(\Gamma') \times \IM^{\gamma}(\Gamma'')$$ formed by moduli spaces of the same type. For the latter we again use that $(M,\omega)$ is closed, see the proof of the next proposition. The above compactness result for one-dimensional moduli spaces translates into $\tilde{X}^2=0$, so that $\HC_*(M_{\phi})=H_*(\T^{(0,0)}\tilde{\Q},\tilde{X})$ is well-defined. \\

\noindent\emph{$L_{\infty}$-algebra}\\

In the same way as the cylindrical contact homology has an immediate interpretation in Hamiltonian Floer theory, the same is indeed true for the new algebraic structures arising from the more general moduli spaces $\IM^{\gamma^+}(\Gamma)$. For this we want to work for the moment with a different algebraic setup, see \cite{EGH} and \cite{CFL}. \\

Instead of using the information of all moduli spaces $\IM^{\gamma^+}(\Gamma)$ to define the chain complex of full contact homology, one can use it to define an $L_{\infty}$-structure on the cylindrical contact homology, see (\cite{M1}, section 2.4).

\begin{definition} A $L_{\infty}$-algebra structure on the graded vector space $\tilde{\Q}$ is a countable collection of multilinear maps $$\tilde{m_r}:\tilde{\Q}^r=\tilde{\Q}\times\ldots\times\tilde{\Q}\to\tilde{\Q}$$ which are homogeneous of degree $2-r$ and satisfy the higher Jacobi identities $$\sum_{k+\ell=r+1} \sum_{\sigma} (-1)^{k(\ell-1)}\epsilon\cdot \tilde{m}_{\ell}(\tilde{m}_k(v_{\sigma(1)},\ldots,v_{\sigma(k)}),v_{\sigma(k+1)},\ldots,v_{\sigma(n)})\,=\,0,$$ where the sign $\epsilon\in\{\pm 1\}$ is defined via the identity $v_{\sigma(1)}\wedge\ldots\wedge v_{\sigma(n)}=\epsilon\cdot v_1\wedge\ldots\wedge v_n$ and the second sum runs over the set of all permutations $\sigma:\{1,\ldots,n\}\to\{1,\ldots,n\}$ with $\sigma(1)<\ldots<\sigma(k)$ and $\sigma(k+1)<\ldots<\sigma(n)$. \end{definition}

Note that the first three higher Jacobi identities have the form 
\begin{eqnarray*} 
&r=1:& \del^2\,=\,0,\\
&r=2:& \del [v_1,v_2]\,=\,[\del v_1,v_2]+(-1)^{\bar{v}_1}[v_1,\del v_2],\\
&r=3:& [[v_1, v_2], v_3]+(-1)^{(\bar{v}_1+\bar{v}_2)\bar{v}_3}[[v_3, v_1], v_2]+(-1)^{\bar{v}_1(\bar{v}_2+\bar{v}_3)}[[v_2, v_3], v1]\\&&\,=\, \tilde{m}_3(\del v_1,v_2,v_3)+(-1)^{\bar{v}_1}\tilde{m}_3(v_1,\del v_2,v_3)+(-1)^{\bar{v}_1+\bar{v}_2}\tilde{m}_3(v_1,v_2,\del v_3),
\end{eqnarray*}
where $\del:=\tilde{m}_1$, $[\cdot,\cdot]=\tilde{m}_2$ and $\bar{v}\in\{\pm 1\}$ denotes the $\IZ_2$-grading of $v\in\tilde{\Q}$. \\

Defining a hierarchy of operations $$\tilde{m}_r:\tilde{\Q}^r=\tilde{\Q}\times\ldots\times\tilde{\Q}\to\tilde{\Q}$$ by $$\tilde{m}_r(\gamma_1,\ldots,\gamma_r):=\sum_{\gamma}\frac{1}{\kappa^{\Gamma}}\sharp\IM^{\gamma}(\Gamma)\cdot\;\gamma$$ for $\Gamma=(\gamma_1,\ldots,\gamma_r)$, it follows that we can polynomially expand $\tilde{X}$ as $$\tilde{X}(q)\;=\;\sum_r\frac{1}{r!} \sum_{\Gamma} q^{\Gamma}\cdot \tilde{m}_r(\gamma_1,\ldots,\gamma_r),$$ with $\Gamma=(\gamma_1,\ldots,\gamma_r)$ at all points $q=(q_{\gamma})\in\tilde{\Q}$. Note that here we identify each closed orbit $\gamma$ as usual with $\del/\del q_{\gamma}$. Then it follows from an easy computation, see \cite{K} or \cite{M1}, that 

\begin{proposition} The identity $\tilde{X}^2=0$ immediately shows that the hierarchy of operations $(\tilde{m}_r)$ satisfy the $L_{\infty}$-relations. \end{proposition}

Note that in \cite{CFL} the authors show that the full algebraic structure of SFT including holomorphic curves with arbitrary many positive ends and genus leads to the structure of an $\IBL_{\infty}$-structure, so the above proposition is already contained in their statement. Note that, since we consider mapping tori and regularity is proven without referring to polyfold theory, see also the appendix, our algebraic structures are rigorously defined. \\

In particular, the operation $m_1$ agrees with the boundary operator of cylindrical contact homology. Using homotopy transfer, it follows that we obtain a $L_{\infty}$-structure with a hierarchy of operations $m_r:\HC^{\cyl}_*(M_{\phi})\times\ldots\times\HC^{\cyl}_*(M_{\phi}) \to \HC^{\cyl}_*(M_{\phi})$ such that $m_2=[\cdot,\cdot]: \HC^{\cyl}_*(M_{\phi}) \times \HC^{\cyl}_*(M_{\phi}) \to \HC^{\cyl}_*(M_{\phi})$ is the Lie bracket already considered in \cite{CFL}. In the same way, note that we obtain a new vector field $X\in\T^{(1,0)}\Q$ on $\Q:=\HC^{\cyl}_*(M_{\phi})$ such that $\HC_*(M_{\phi})=H_*(\T^{(0,0)}\Q,X)$. We emphasize that, as on the chain level, the $L_{\infty}$-structure as well as the vector field $X$ contain the same algebraic information, so we switch freely between both formulations. Furthermore, when it is clear from the context whether we consider the $L_{\infty}$-structure on the chain or the homology level, we drop the tilde from the notation. \\     

In order to see that we have actually introduced a $L_{\infty}$-structure in Hamiltonian Floer theory, it suffices to recall that the cylindrical contact homology is given by the sum of Floer homologies, $\HC^{\cyl}_*(M_{\phi})=\bigoplus_k\HF_*(\phi^k)$. It follows that the $L_{\infty}$-operations $m_r: \HC^{\cyl}_*(M_{\phi})\times\ldots\times\HC^{\cyl}_*(M_{\phi}) \to \HC^{\cyl}_*(M_{\phi})$ are indeed given themselves by an infinite family of multilinear operations $$m_r^{k_1,\ldots,k_r}:\HF_*(\phi^{k_1})\times\ldots\times\HF_*(\phi^{k_r})\to\HF_*(\phi^k),$$ where $k=k_1+\ldots+k_r$. \\

\noindent\emph{Generalization to symplectic manifolds with contact-type boundary}\\

We have already seen in the last section that the cylindrical contact homology $\HC^{\cyl}_*(M_{\phi})$, which in \cite{EGH} is originally only defined for closed manifolds, can be generalized to the case where the underlying symplectic manifold has contact-type boundary. It is one of the key observations of this paper that the $L_{\infty}$-structure on it also generalizes to the case of open symplectic manifolds. This crucially relies on the following lemma, originally proven in \cite{F1}. \\

For this observe that every map from a punctured Riemann sphere $\Si$ to $\IR\times M_{\phi}\cong\RS\times M$ we can naturally written as a pair of maps $\tilde{u} = (h,u): \Si \to (\RS) \times M$.

\begin{lemma} The map $\tilde{u}:\Si\to\RS\times M$ is $\tilde{J}$-holomorphic precisely when $h=(h_1,h_2):\Si\to\RS$ is holomorphic and $u:\Si\to M$ satisfies the $h$-dependent perturbed Cauchy-Riemann equation of Floer type, 
\begin{eqnarray*} 
\CR_{J,H,h} u &=& \Lambda^{0,1}_J(du + X^H(h_2,u)\otimes dh_2) \\ 
              &=& du + X^H_{h_2}(u)\otimes dh_2 + J(u)\cdot(du+X^H_{h_2}(u)\otimes dh_2) \cdot i.
\end{eqnarray*}
\end{lemma}

\begin{proof} Observing that $\tilde{J}(t,p): T(\RS)\oplus TM\to T(\RS)\oplus TM$ is given by 
\begin{equation*} \tilde{J}(t,p)=\binom{\;\;i\;\;\;\;0\;\;}{\Delta(t,p)\;\;J(p)} \end{equation*} 
with $\Delta(t,p) = -X^H_t(p)\otimes ds + J(p) X^H_t(p)\otimes dt$ we compute 
\begin{eqnarray*} 
 && (dh,du) + \tilde{J}(h_2,u) \cdot (dh,du) \cdot i \\
 &=& (dh + i \cdot dh \cdot i, \\
 && \;du + (J(u)\cdot du - X^H(h_2,u)\otimes dh_1 + J(u)X^H_{h_2}(u)\otimes dh_2) \cdot i) \\
 &=& (\CR h, du - X^H_{h_2}(u)\otimes dh_1\cdot i + J(u)\cdot(du + X^H_{h_2}(u)\otimes dh_2) \cdot i). 
\end{eqnarray*} 
Finally observe that $dh_1 \cdot i = -dh_2$ if $\CR h =0$. \end{proof}

With this we can prove 

\begin{theorem}
For a symplectic manifold with contact-type boundary and a Hamiltonian symplectomorphism $\phi$ of the special form described above, that is, where the underlying Hamiltonian has asymptotic linear growth, the $L_{\infty}$-structure in Hamiltonian Floer theory is still well-defined. 
\end{theorem}

\begin{proof} While for the result about cylindrical contact homology we have used that every $\tilde{J}$-holomorphic cylinder $\tilde{u}$ is indeed given by a Floer trajectory $u: \IR\times S^1\to M$ and then used the well-known $C^0$-bound for Floer trajectories for our specially chosen Hamiltonian, here we proceed precisely along the same lines. Although we no longer consider cylinders, above we have shown that for maps $\tilde{u}=(h,u): \Si\to\IR\times M_{\phi}\cong (\IR\times S^1)\times M$ starting from arbitrary punctured spheres $\Si$ we have  $$\CR_{\tilde{J}}(\tilde{u})=0\;\Leftrightarrow\;\CR h=0\;\wedge\;\CR_{J,H}(u)=\Lambda^{0,1}_J(du + X^H_{h_2}(u)\otimes dh_2)=0,$$ where we again use the canonical diffeomorphism $M_{\phi}\cong S^1\times M$ given by the Hamiltonian flow. \\

In particular, the maps $u:\Si\to M$ still satisfy a Floer equation, where $h_2: \Si\to S^1$ is the second component of the branching map $h=(h_1,h_2)$ to the cylinder. More precisely, they are indeed \emph{Floer solutions in the sense of (\cite{R}, 2.5.1)}. That means that the perturbed Cauchy-Riemann operator $\CR_{J,H}$ from above belongs to the class of perturbed Cauchy-Riemann operators for punctured spheres for which A. Ritter proved a $C^0$-bound in (\cite{R}, lemma 19.1) to establish his TQFT structure on symplectic homology. Indeed we obviously have $d\beta\leq 0$ for $\beta=dh_2$. Although A. Ritter only considers moduli spaces of holomorphic curves with fixed conformal structure while we must allow the conformal structure to vary, his $C^0$-bound is sufficient since we still keep the asymptotic orbits fixed. Hence we still have that the punctured holomorphic curves in our moduli spaces stay in a compact subset of $M_{\phi}\cong S^1\times M$ and thus the required compactness results from \cite{BEHWZ} still apply. \end{proof}
 
\noindent\emph{$L_{\infty}$-morphisms}\\

In order to show that the $L_{\infty}$-structure in Hamiltonian Floer theory is indeed an invariant, it still remains to introduce morphisms. \\

Let $(\tilde{\Q}^+,\tilde{X}^+)$ and $(\tilde{\Q}^-,\tilde{X}^-)$ be pairs of chain spaces and cohomological vector fields for full contact homology, obtained using two different choices of cylindrical almost complex structures $\tilde{J}^{\pm}$ on $\RS\times M$ defined using two different choices of (domain-dependent) Hamiltonian functions $H^{\pm}$ and $\omega$-compatible almost complex structures $J^{\pm}$. As for the invariance proof for cylindrical contact homology we assume that we have chosen a smooth family $(H_s,J_s)$ of (domain-dependent) Hamiltonians and $\omega$-compatible almost complex structures interpolating between $(H^+,J^+)$ and $(H^-,J^-)$, so that we can equip the cylindrical manifold $\RS\times M$  with the structure of an almost complex manifold with cylindrical ends in the sense of \cite{BEHWZ}. We then count elements in moduli spaces $\widehat{\IM}^{\gamma^+}(\Gamma)=\widehat{\IM}^{\gamma^+}(\Gamma,A)$ of $\hat{J}$-holomorphic curves, which are defined analogous to the moduli spaces $\IM^{\gamma^+}(\Gamma)$, with the only difference that we no longer divide out the $\IR$-action in the target as the latter no longer exists. \\

It is shown in \cite{EGH}, see also \cite{F1}, that we can use these counts to define a chain map $\varphi^{(0,0)}:\T^{(0,0)}\tilde{\Q}^+\to\T^{(0,0)}\tilde{\Q}^-$ for the full contact homology by defining $$\varphi^{(0,0)}(q_{\gamma^+})\;=\;\sum_{\Gamma,A}\frac{1}{\kappa^{\Gamma}}\cdot\#\widehat{\IM}^{\gamma^+}(\Gamma,A)\cdot q^{\Gamma} t^{c_1(A)}$$ and $\varphi^{(0,0)}(q_{\gamma^+_1}\cdot\ldots\cdot q_{\gamma^+_s}):=\varphi^{(0,0)}(q_{\gamma^+_1})\cdot\ldots\cdot\varphi^{(0,0)}(q_{\gamma^+_s})$. On the other hand, it is shown in \cite{EGH} that the map is indeed compatible with the cohomological vector fields in the sense that $\tilde{X}^-\circ\varphi^{(0,0)}=\varphi^{(0,0)}\circ \tilde{X}^+$. After passing to homology, it can be shown, see \cite{EGH}, that the map $\varphi^{(0,0)}$ indeed defines an isomorphism of the full contact homology algebras,$$\varphi^{(0,0)}:\;H_*(\T^{(0,0)}\tilde{\Q}^+,\tilde{X}^+)\stackrel{\cong}{\longrightarrow}H_*(\T^{(0,0)}\tilde{\Q}^-,\tilde{X}^-).$$

Apart from the fact that $\varphi^{(0,0)}$ can be used to show that, for closed symplectic manifolds $M$, the full contact homology $\HC_*(M_{\phi})$ is independent of all auxiliary choices, it at the same time is a morphism of the corresponding $L_{\infty}$-structure, thus proving that the $L_{\infty}$-structure on $\HC^{\cyl}_*(M_{\phi})$ is well-defined up to homotopy. \\

Indeed, defining as above a hierarchy of operations $$\varphi_r: \tilde{\Q}^-\times\ldots\times\tilde{\Q}^-\to\tilde{\Q}^+$$ by $$\varphi_r(\gamma_1,\ldots,\gamma_r):=\sum_{\gamma}\frac{1}{\kappa^{\Gamma}}\sharp\widehat{\IM}^{\gamma}(\Gamma)\cdot\;\gamma$$ for $\Gamma=(\gamma_1,\ldots,\gamma_r)$, it follows that the chain map $\varphi^{(0,0)}:\T^{(0,0)}\tilde{\Q}^+\to\T^{(0,0)}\tilde{\Q}^-$ is equivalently described by requiring that $\varphi^{(0,0)}(f)=f\circ\varphi$ for all $f\in \T^{(0,0)}\tilde{\Q}^+$, where $$\varphi: \tilde{\Q}^-\to\tilde{\Q}^+,\;\varphi(q):=\sum_r\frac{1}{r!}\sum_{\Gamma} q^{\Gamma}\cdot \varphi_r(\gamma_1,\ldots,\gamma_r).$$ With this it is again an easy exercise, see \cite{K} or \cite{M1}, to prove that

\begin{proposition} The identity $\tilde{X}^-\circ\varphi^{(0,0)}=\varphi^{(0,0)}\circ \tilde{X}^+$ immediately shows that the hierarchy of operations $(\varphi_r)$ defines an $L_{\infty}$-morphism between the $L_{\infty}$-structures $(\tilde{m}_r^-)$ on $\tilde{\Q}^-$ and $(\tilde{m}_r^+)$ on $\tilde{\Q}^+$. \end{proposition}

We now want to generalize again to the case of symplectic manifolds with contact-type boundary. Here we prove the following

\begin{theorem}
Fixing the asymptotic linear slope of the Hamiltonian in the cylindrical end, the $L_{\infty}$-structure on $\HC^{\cyl}_*(M_{\phi})=\bigoplus_k \HF_*(\phi^k)$ is an invariant of the symplectic manifold with contact-type boundary, up to homotopy.
\end{theorem}

\begin{proof} Since the compactness results from \cite{BEHWZ} for symplectic cobordisms only apply to manifolds with cylindrical ends over closed stable Hamiltonian manifolds, we again need $C^0$-bounds for the holomorphic curves in the cobordism interpolating between the mapping tori $M_{\phi^+}, M_{\phi^-}\cong S^1\times M$. As before, see (\cite{F1}, theorem 5.2), it follows that these holomorphic maps from a punctured sphere $\Si$ are given by a branching map to the cylinder and a map $u:\Si\to M$ satisfying now the perturbed Cauchy-Riemann equation $\CR_{J,\tilde{H}}(u)=\Lambda^{0,1}_J(du + X^{\tilde{H}}_{h_1,h_2}(u)\otimes dh_2)$, where $\tilde{H}_{s,t}$ now interpolates between $H_t^+$ and $H_t^-$ from before. Assuming that $\tilde{H}$ grows asymptotically linear with fixed slope in the cylindrical end, independent of $s\in\IR$, the result in the appendix of (\cite{R}, lemma 19.1) can still be applied to give the desired $C^0$-bound and hence the required compactness statement. \end{proof}

We end this section by showing that the resulting $L_{\infty}$-structure indeed extends the well-known Lie bracket in Hamiltonian Floer theory as defined in \cite{A} and \cite{R}. For this we show 

\begin{proposition}\label{Lie} The coefficients $1/\kappa^{\Gamma}\cdot\#\IM^{\gamma^+}(\Gamma;A)$ appearing in the definition of the $L_{\infty}$-structure count Floer solutions $u:\Si\to M$ in the sense (\cite{R}, 6.1) with one positive puncture with varying conformal structure and simultaneously rotating asymptotic markers. In particular, the above $L_{\infty}$-structure extends the Lie bracket in Floer homology defined in (\cite{A}, 2.5.1) \end{proposition}

\begin{proof} As in the case of cylinders, following (\cite{F1}, proposition 2.2), the map $\tilde{u}=(h,u):\Si\to\IR\times M_{\phi}\cong\RS\times M$ is $\tilde{J}$-holomorphic precisely when $h:\Si\to\RS$ is holomorphic and $u:\Si\to M$ satisfies the Floer equation $\CR_{J,H,h}(u)=\Lambda^{0,1}_J(du+X^H_{h_2}\otimes dh_2)=0$. Forgetting the map $u$ and hence mapping $(\tilde{u},z_0,\ldots,z_{r-1})$ to $(h,z_0,\ldots,z_{r-1})$ defines a projection from $\IM^{\gamma^+}(\Gamma)$ to $\IM(k_0,\ldots,k_{r-1})$, where $\IM(k_0,\ldots,k_{r-1})=\IM^k(k_0,\ldots,k_{r-1})$ denotes the moduli space of holomorphic functions $h$ on $\IC$ with $r$ zeroes $z_1,\ldots,z_{r-1}$ of predescribed orders $k_0,\ldots,k_{r-1}$ up to Moebius transformations of $\IC$ and $\IR$-shift in the target, see the proof of (\cite{F1}, lemma 2.3). Since this forgetful map is further assumed to remember asymptotic markers at the punctures, one shall think of $\IM(k_0,\ldots,k_{r-1})$ as the moduli spaces of full contact homology in the case when $(M,\omega)$ is the point. Note that the fibre of this projection over each point in $\IM^k(k_0,\ldots,k_{r-1})$ is a moduli space of Floer maps $u:\Si\to M$ from a punctured Riemann surface of fixed conformal structure and with fixed asymptotic markers and hence precisely of the type as considered in (\cite{R},6.1); in particular, note that the special Floer equation $\CR_{J,H,h}(u)=0$ from above indeed satisfies the monotonicity assumption in \cite{R} since $\beta=dh_2$ immediately gives $d\beta\leq 0$. \\

On the other hand, further forgetting everything except the position of the punctures on the underlying punctured sphere, that is, mapping $(h,z_0,\ldots,z_{r-1})$ to $(z_0,\ldots,z_{r-1})$, defines a further natural forgetful map from $\IM^k(k_0,\ldots,k_{r-1})$ to the moduli space $\IM_{r+1}$ of conformal structures on the $r+1$-punctured sphere, and in order to finish the proof it remains to understand the fibre of this natural projection. Since the map $h$ exists for any choice of punctures $z_0,\ldots,z_{r-1}$ and orders $k_0,\ldots,k_{r-1}$ and is unique modulo a factor from $\IC^*\cong\RS$, observe we get an $S^1$-family of maps $h$ after dividing out the natural $\IR$-shift on the target manifold $\RS$. Furthermore, by varying $h=(h_1,h_2):\Si\to\RS$ inside this $S^1$-family, observe that the asymptotic markers at all punctures are indeed rotated simultaneously as they are mapped to the special point $0\in S^1$ by $h_2$. \\

While the map $h$ is indeed fixed (modulo $\IR$-shift) by fixing the asymptotic marker at any puncture, note that for a fixed choice of $h$ there is still a choice of $k$ asymptotic markers at the positive puncture and $k_0\cdot\ldots\cdot k_{r-1}$ $r$-tuples of asymptotic markers at the $r$ negative punctures, that is, the fibre of this second forgetful map is given by $S^1\times\IZ_k\times(\IZ_{k_0}\times\dots\times\IZ_{k_{r-1}})$. In order to explain the last factor $\IZ_{k_0}\times\dots\times\IZ_{k_{r-1}}$ as well as the appearance of the combinatorial factor $1/\kappa^{\Gamma}$, $\kappa^{\Gamma}=\kappa_{\gamma_0}\cdot\ldots\cdot\kappa_{\gamma_{r-1}}$ in front of $\#\IM^{\gamma^+}(\Gamma;A)$, note that for the identification of  cylindrical contact homology of $M_{\phi}$ with the sum of the Floer homologies for powers of $\phi$ we identify the closed Reeb orbit $\gamma_i$ with the weighted sum $1/\kappa_{\gamma_i}\cdot (x_i\pm\ldots\pm\phi^{k_i-1}(x_i))$ of fixed points of $\phi^{k_i}$ for all $i=0,\ldots,r-1$. On the other hand, since varying $h$ inside its $S^1$-family only rotates one of the $k$ asymptotic markers at the positive puncture onto the next one, it follows that $S^1\times\IZ_k$ precisely parametrizes all possible positions of the asymptotic marker at the positive puncture. \end{proof}

\begin{center}
\includegraphics[height=6cm]{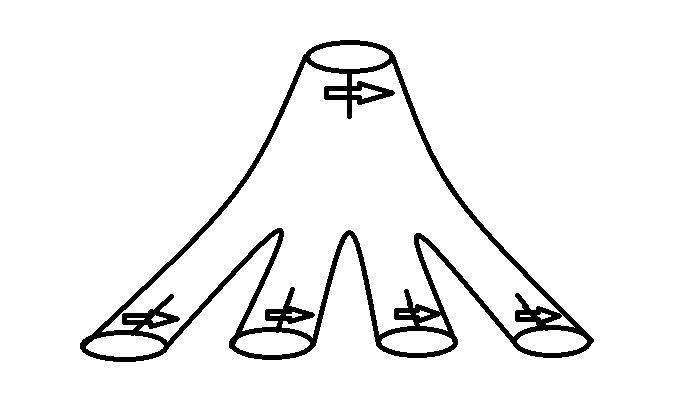}\\
\small{Punctured holomorphic curve with simultaneously rotating asymptotic markers}
\end{center}

\section{From $L_{\infty}$-algebras to differential graded manifolds}

\noindent\emph{Differential graded manifolds}\\

In the last section we have shown that the vector field $\tilde{X}\in\T^{(1,0)}\tilde{\Q}$ encodes an $L_{\infty}$-structure $(\tilde{m}_r)$ on $\tilde{\Q}$ (with $\tilde{m}_1=\del$ being the boundary operator of cylindrical contact homology) by polynomially expanding $\tilde{X}$. For the new algebraic structures arising from the definition of the big pair-of-pants product, which we will define in the next section, we need a more geometrical view of the $L_{\infty}$-structure. \\  

\begin{proposition} The pair $(\tilde{\Q}, \tilde{X})$ defines a (infinite-dimensional) differential graded manifold. \end{proposition}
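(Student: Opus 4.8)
The plan is to verify directly that the pair $(\tilde{\Q},\tilde{X})$ meets the three requirements in the definition of a differential graded manifold recalled above: that $\tilde{X}$ is a genuine vector field on the formal pointed graded manifold $\tilde{\Q}$, that $\tilde{X}(0)=0$, and that $[\tilde{X},\tilde{X}]=2\tilde{X}^2=0$. The first point is a matter of checking that the coefficient of each monomial $q^{\Gamma}\,\del/\del q_{\gamma^+}$ in the defining formula for $\tilde{X}$, namely $\sum_A\frac{1}{r!}\frac{1}{\kappa^{\Gamma}}\sharp\IM^{\gamma^+}(\Gamma,A)\,t^{c_1(A)}$, lies in the Novikov field $\Lambda$; this follows, exactly as for the $L_\infty$-structure in \cite{F}, from semimonotonicity of $(M,\omega)$ together with the energy bounds and Gromov-type compactness for the spaces $\IM^{\gamma^+}(\Gamma,A)$ and the transversality established in the appendix of \cite{F}. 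For $\tilde{X}(0)=0$ one must see that the constant term of $\tilde{X}$ vanishes, i.e.\ that the moduli spaces $\IM^{\gamma^+}(\emptyset,A)$ of $\tilde{J}$-holomorphic planes with a single positive cylindrical end and no negative ends are empty. I would argue this geometrically: composing such a plane with the holomorphic projection $(\IR\times M_{\phi},\tilde{J})\to(\IR\times S^1,i)$ gives a holomorphic map $\IC\to\IR\times S^1$ which near the puncture is asymptotic to a $k$-fold cover of the $S^1$-factor; lifting to the universal cover it would be an entire function growing like $-k\log z$ at infinity, which is impossible. Note that there is no constraint on the \emph{linear} part of $\tilde{X}$, which is the coboundary operator $m_1=\del$ of cylindrical contact cohomology and is in general nonzero.

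The core of the argument is the identity $\tilde{X}^2=0$, equivalently $[\tilde{X},\tilde{X}]=0$ since $\tilde{X}$ has odd degree. This uses the SFT compactness statement recalled just before the proposition: for a class $A$ with the index equal to two, $\IM^{\gamma^+}(\Gamma,A)$ compactifies to a compact one-dimensional manifold whose codimension-one boundary is $\bigcup\IM^{\gamma^+}(\Gamma')\times\IM^{\gamma}(\Gamma'')$, a union of two-level buildings in each of which \emph{both} levels are curves of the same type, with one positive end. When $M$ is closed this is \cite{BEHWZ}; when $M$ is a Liouville manifold and $\phi$ has the special form, it follows by combining \cite{BEHWZ} with the $C^0$-bounds of \cite{R} recalled in Section 1, which keep all curves in a compact part of $M_{\phi}$, while the holomorphic projection to $\IR\times S^1$ and the classical theory of branched covers exclude further codimension-one degenerations. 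On the algebraic side one expands $\tilde{X}^2$ using the graded Leibniz rule: applying $\tilde{X}^2$ to a coordinate $q_{\gamma^+}$ produces a sum of terms, each of which corresponds to gluing a curve counted by $\tilde{X}$ onto one of the negative cylindrical ends of another curve counted by $\tilde{X}$, i.e.\ precisely to one of the product moduli spaces $\IM^{\gamma^+}(\Gamma')\times\IM^{\gamma}(\Gamma'')$ above. One then checks that the symmetry factors $1/r!$ (the ordering of the negative ends), the orbit-multiplicity weights $1/\kappa^{\Gamma}$, and the coherent orientations of \cite{BM} all match up, so that the coefficient of each monomial in $\tilde{X}^2$ is the signed count of the boundary of a compact oriented one-manifold, hence zero; this is $\tilde{X}^2=0$.

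The genuinely hard analytic inputs --- transversality for the spaces $\IM^{\gamma^+}(\Gamma,A)$ through domain-dependent Hamiltonian perturbations, the SFT compactness of \cite{BEHWZ} together with the $C^0$-estimates of \cite{R}, and coherent orientations \cite{BM} --- are all established in the cited works, so within this proof the main obstacle is the bookkeeping of the last step: verifying that $\tilde{X}^2$ receives \emph{exactly} the boundary contributions listed, with signs and combinatorial factors in agreement, and that in the infinite-dimensional (formal pointed graded) setting every sum involved is finite in each fixed $q$-degree and each Novikov degree, so that $\tilde{X}$ is honestly a vector field on $\tilde{\Q}$ and $\tilde{X}^2$ is defined term by term. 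Once this is in place, $(\tilde{\Q},\tilde{X})$ satisfies all the axioms of a differential graded manifold.
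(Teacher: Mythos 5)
Your proof is correct and takes essentially the same route as the paper's: it reduces the claim to the SFT master equation $\tilde{X}^2=0$ (obtained from the codimension-one boundary description of the index-two moduli spaces $\IM^{\gamma^+}(\Gamma,A)$, which the paper records in the paragraph just before the proposition) together with $\tilde{X}(0)=0$. The paper's own proof is considerably more compressed --- it simply notes that $\tilde{X}$ has degree one by the SFT conventions, invokes the master equation already established, and asserts $\tilde{X}(0)=0$ --- so everything you spell out (Novikov-field coefficients, nonexistence of planes via the holomorphic projection to $\RS$, and the combinatorial bookkeeping behind $\tilde{X}^2=0$) is exactly the content underlying that short invocation.
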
 

\begin{proof} For the definition of a (formal pointed) differential graded manifold we refer to \cite{CS}, see also (\cite{K}, subsection 4.3). First, it follows from the degree conventions in SFT that $\tilde{X}$ has odd degree. Using the definition of the graded Lie bracket, the master equation for the boundary operator in full contact homology further immediately implies that $$[\tilde{X},\tilde{X}]=2\tilde{X}^2=0.$$ Together with $\tilde{X}(0)=0$ this shows that $\tilde{X}\in\T^{(1,0)}\tilde{\Q}$ is indeed a \emph{cohomological vector field} in the sense of \cite{CS},\cite{K}. \end{proof}

For the translation between the languages of differential graded manifolds and of $L_{\infty}$-structures, we refer to \cite{K}, see also \cite{M1}. \\

As already mentioned above in the summary, the important property for us is that on the resulting differential graded manifold $\tilde{\Q}_{\tilde{X}}:=(\tilde{\Q},\tilde{X})$ one still has a space of functions $\T^{(0,0)}\tilde{\Q}_{\tilde{X}}$, called its homology structure sheaf in \cite{M2}, and a space of vector fields $\T^{(1,0)}\tilde{\Q}_{\tilde{X}}$, called its homology tangent sheaf in \cite{M2}, which in turn can be used to define arbitrary tensor fields $\T^{(r,s)}\tilde{\Q}_{\tilde{X}}$. Their elements can actually be viewed as functions and vector fields on some abstract singular space $\tilde{X}^{-1}(0)/\sim$ given by the solutions of the Maurer-Cartan equation of the equivalent $L_{\infty}$-structure modulo gauge; since we do not need this interpretation, we will not elaborate on this geometric picture any further here. \\
 
While $\T^{(0,0)}\tilde{\Q}_{\tilde{X}}$ agrees with contact homology, the space of vector fields $\T^{(1,0)}\tilde{\Q}_{\tilde{X}}$ is defined using the Lie bracket by  $$\T^{(1,0)}\tilde{\Q}_{\tilde{X}}\;:=\; H_*(\T^{(1,0)}\tilde{\Q},[\tilde{X},\cdot]).$$  On the other hand, we will use that the higher tensor fields $\T^{(r,s)}\tilde{\Q}_{\tilde{X}}$ can also be directly defined. For this we use the fact, which was already used in \cite{EGH} in order to define so-called satellites, that the above identity for $\tilde{X}$ implies that the induced Lie derivative $\LL_{\tilde{X}}$ defines a boundary operator on arbitrary tensor fields $\T^{(r,s)}\tilde{\Q}$,  $$\LL_{\tilde{X}}: \T^{(r,s)}\tilde{\Q}\to \T^{(r,s)}\tilde{\Q},\; \LL_{\tilde{X}}\circ\LL_{\tilde{X}}= \LL_{[\tilde{X},\tilde{X}]}=0.$$ Note that, in analogy to the definitions of $\T^{(0,0)}\tilde{\Q}$ and $\T^{(1,0)}\tilde{\Q}$, the space $\T^{(r,s)}\tilde{\Q}$ is spanned, as a graded linear space over $\Lambda$, by formal products of the form $$q^{\Gamma} \cdot dq_{\gamma^-_1}\otimes\ldots\otimes dq_{\gamma^-_s}\otimes\frac{\del}{\del q_{\gamma^+_1}}\otimes\ldots\otimes\frac{\del}{\del q_{\gamma^+_r}}.$$  On the other hand, since the Lie derivative commutes with the contraction of tensors, we find that indeed $$\T^{(r,s)}\tilde{\Q}_{\tilde{X}} = H_*(\T^{(r,s)}\tilde{\Q},\LL_{\tilde{X}}).$$\\

\noindent\emph{Morphisms}\\

Following (\cite{CS}, subsection 3.3) and \cite{K}, note that a morphism $\varphi$ between two differential graded manifolds $(\tilde{\Q}^-,\tilde{X}^-)$ and $(\tilde{\Q}^+,\tilde{X}^+)$ is a linear map $\varphi^{(0,0)}: \T^{(0,0)}\tilde{\Q}^+\to \T^{(0,0)}\tilde{\Q}^-$ which is compatible with the cohomological vector fields in the sense that $\tilde{X}^-\circ\varphi^{(0,0)}=\varphi^{(0,0)}\circ\tilde{X}^+$. Note that in the language of (\cite{K},subsection 4.1) this means that $\varphi^{(0,0)}$ defines a map $\varphi: \tilde{\Q}^-\to\tilde{\Q}^+$, where $\tilde{\Q}^{\pm}$ are viewed as formal pointed manifolds rather than linear spaces. In particular, it follows that $\varphi^{(0,0)}$ descends to a map between the spaces of functions,  $$\varphi^{(0,0)}: \T^{(0,0)}\tilde{\Q}^+_{\tilde{X}^+}\to \T^{(0,0)}\tilde{\Q}^-_{\tilde{X}^-}.$$ On the other hand, the map $\varphi^{(0,0)}$ uniquely defines a corresponding map $\varphi^{(0,1)}:\T^{(0,1)}\tilde{\Q}^+\to\T^{(0,1)}\tilde{\Q}^-$ on the space of one-forms by the requirement that $d\circ\varphi^{(0,0)}=\varphi^{(0,1)}\circ d$ with the exterior derivative $d:\T^{(0,0)}\tilde{\Q}^{\pm}\to\T^{(0,1)}\tilde{\Q}^{\pm}$. It is given by $$\varphi^{(0,1)}(q^{\Gamma_+} dq_{\gamma^+})=\varphi^{(0,0)}(q^{\Gamma_+})\varphi^{(0,1)}(dq_{\gamma^+})$$ with $\varphi^{(0,1)}(dq_{\gamma^+})=d(\varphi^{(0,0)}(q_{\gamma^+}))$. Using the compatibility of exterior derivative and Lie derivative, it again follows that $\varphi^{(0,1)}$ descends to a map between the spaces of one-forms,  $$\varphi^{(0,1)}: \T^{(0,1)}\tilde{\Q}^+_{\tilde{X}^+}\to \T^{(0,1)}\tilde{\Q}^-_{\tilde{X}^-}.$$ 

The reader familiar with the algebraic framework of SFT observes that the chain maps for full contact homology provides us with examples for morphisms of differential graded manifolds. Indeed, the invariance properties of contact homology stated in the theorem above, lead to an isomorphism of the corresponding differential graded manifolds. Note that in the last section we used the same proof to show that the $L_{\infty}$-structure on the cylindrical contact homology $\HC_*^{\cyl}(M_{\phi})$ is well-defined up to homotopy. For this we used that the $L_{\infty}$-structures that has orginally been defined on the chain space, actually descends to an $L_{\infty}$-structure on homology by homotopy transfer. Using that the cohomological vector field $\tilde{X}$ is just a geometric way to encode the $L_{\infty}$-structure, see \cite{K}, in complete analogy we obtain that there exists a cohomological vector field $X$ on the sum of the Floer cohomologies $\Q:=\HC_*^{\cyl}(M_{\phi})=\bigoplus_k\HF_*(\phi^k)$ such that $(\tilde{\Q},\tilde{X})$ and $(\Q,X)$ define the same differential graded manifold, up to homotopy; in particular, we have that the tensor fields are the same, $$\T^{(r,s)}\Q_{X}\cong \T^{(r,s)}\tilde{\Q}_{\tilde{X}}.$$  For this observe that the contact homology differential naturally can be written as an infinite sum, $\tilde{X}=\sum_{r=1}^{\infty} \tilde{X}_r$, where $\tilde{X}_r\in\T^{(1,0)}\tilde{\Q}$ contains only those summands with $q_{\gamma}$-monomials of length $r$. It is an important observation that, in our case of Hamiltonian mapping tori, this sum indeed starts with $r=1$, since there obviously are no $\tilde{J}$-holomorphic disks in $\RS\times M$. On the other hand, the first summand $\tilde{X}_1$ agrees with the differential $\del$ in cylindrical contact homology, $$\tilde{X}_1=\sum_{\gamma^+}\Bigl(\sum_{\gamma^-,A} \frac{1}{\kappa_{\gamma^-}}\sharp\IM^{\gamma^+}_{\gamma^-}(A)\cdot\; q_{\gamma^-} t^{c_1(A)}\Bigr)\frac{\del}{\del q_{\gamma^+}}\;\in\; \T^{(1,0)}\tilde{\Q}.$$ 

\noindent\emph{Evaluation maps for tensor fields}\\

We would like to end with the following statement about differential graded manifolds. In order to justify that we can speak about tensor fields on some generalized version of manifold, we would like to show there is indeed a evaluation map for tensor fields in $\T^{(r,s)}\tilde{\Q}_{\tilde{X}}$ . 

\begin{proposition} Since $\tilde{X}(0)=0$, there exists a natural evaluation (or restriction) map for tensor fields at $q=0$, $$\T^{(r,s)}\tilde{\Q}_{\tilde{X}}\;\to\;((\HC_*^{\cyl})^{\otimes s})^*\otimes (\HC_*^{\cyl})^{\otimes r},\, \alpha\mapsto \alpha(0).$$ \end{proposition}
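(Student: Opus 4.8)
The plan is to exploit the structure of the cohomological vector field $\tilde X$ noted just above: its Taylor expansion starts in degree one, $\tilde X=\sum_{r\ge 1}\tilde X_r$, so that $\tilde X$ has no constant term (this is $\tilde X(0)=0$) and its linear part is precisely the cylindrical contact cohomology differential $\tilde X_1=\del$. The point is that evaluation at $q=0$ of a tensor field is not a priori well-defined on the cohomology $\T^{(r,s)}\tilde\Q_{\tilde X}=H_*(\T^{(r,s)}\tilde\Q,\LL_{\tilde X})$, because adding a $\LL_{\tilde X}$-exact tensor could in principle change the value at $0$; so the real content is to show that the naive evaluation map on cochains, $\operatorname{ev}_0:\T^{(r,s)}\tilde\Q\to\bigl((\HC^*_{\cyl})^{\otimes s}\bigr)^*\otimes(\HC^*_{\cyl})^{\otimes r}$, descends to homology. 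Concretely, a tensor field is a $\Lambda$-linear combination of monomials $q^{\Gamma}\otimes dq_{\gamma^-_1}\otimes\cdots\otimes dq_{\gamma^-_r}\otimes\del/\del q_{\gamma^+_1}\otimes\cdots\otimes\del/\del q_{\gamma^+_s}$, and $\operatorname{ev}_0$ keeps only those monomials with $\Gamma=\emptyset$ (i.e. $q^{\Gamma}=1$), sending them to the corresponding element of $(\HC^*_{\cyl})^*{}^{\otimes s}\otimes(\HC^*_{\cyl})^{\otimes r}$ and killing all monomials with $\Gamma\neq\emptyset$.

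First I would make precise the target: since there are no $\tilde J$-holomorphic disks in $\RS\times M$, the leading term $\tilde X_1=\del$ equips the cochain space $\tilde\Q$ with the structure of a cochain complex whose cohomology is $\HC^*_{\cyl}(M_{\phi})$, and likewise $\del$ acts on $\tilde\Q^*$; the tensor powers then carry the induced differential, and I define the right-hand side to be $H_*$ of that complex, i.e. $\bigl((\HC^*_{\cyl})^{\otimes s}\bigr)^*\otimes(\HC^*_{\cyl})^{\otimes r}$ by Künneth over the field $\Lambda$. Next I would compute $\LL_{\tilde X}$ on a monomial and observe the key filtration statement: grade $\T^{(r,s)}\tilde\Q$ by the length $|\Gamma|$ of the $q$-monomial. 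The operator $\LL_{\tilde X_1}=\LL_{\del}$ preserves this grading, while each $\LL_{\tilde X_r}$ with $r\ge 2$ strictly raises it, because contracting $\tilde X_r$ (which carries a length-$r$ monomial in $q$) either replaces a $\del/\del q$ factor by that monomial, or differentiates a $q^{\Gamma}$ and reinserts a length-$r$ monomial, so the total $q$-length goes up by $r-1\ge 1$. Therefore on the associated graded, $\LL_{\tilde X}$ reduces to $\LL_{\del}$, and the length-zero graded piece is exactly $\operatorname{ker}/\operatorname{im}$ of $\del$ on $(\tilde\Q^*)^{\otimes s}\otimes\tilde\Q^{\otimes r}$, namely the claimed target.

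The mechanism for well-definedness is then the following: if $\alpha=\LL_{\tilde X}\beta$ is $\LL_{\tilde X}$-exact, write $\beta=\beta_0+\beta_{>0}$ according to the length filtration; then $\LL_{\tilde X}\beta$ has length-zero component equal to $\LL_{\del}\beta_0$, which is $\del$-exact, hence maps to $0$ in the target. Conversely a length-zero cocycle of $\LL_{\tilde X}$ need not exist as such, but the argument only requires that $\operatorname{ev}_0$ be a chain map from $(\T^{(r,s)}\tilde\Q,\LL_{\tilde X})$ to the target with its $\del$-differential and then kill exact classes — equivalently, one runs the filtration spectral sequence and reads off that the edge map $E_\infty\to E_1$-in-length-zero is the desired evaluation. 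I would phrase it directly: $\operatorname{ev}_0\circ\LL_{\tilde X}=\del\circ\operatorname{ev}_0$ because $\operatorname{ev}_0$ annihilates all positive-length monomials and $\LL_{\tilde X_r}$ for $r\ge2$ lands in positive length, while $\LL_{\tilde X_1}=\LL_{\del}$ commutes with $\operatorname{ev}_0$ by construction; hence $\operatorname{ev}_0$ is a chain map and induces $\alpha\mapsto\alpha(0)$ on cohomology. The naturality under changes of auxiliary data — i.e. that this evaluation is compatible with the morphisms $\varphi^{(0,0)},\varphi^{(0,1)}$ of differential graded manifolds described above — follows because those morphisms also have vanishing constant term and linear part equal to the contact-cohomology chain isomorphism, so they too respect the length filtration.

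The main obstacle I expect is purely bookkeeping: carefully checking that the Lie derivative $\LL_{\tilde X_r}$ with $r\ge 2$ really does strictly increase the $q$-length on \emph{every} type of monomial, including the mixed terms where $\tilde X_r$ acts by differentiating the coefficient function $q^{\Gamma}$ (via the derivation/Leibniz rule) rather than by substitution into a $\del/\del q$ slot, and keeping track of the Koszul signs so that the identity $\operatorname{ev}_0\circ\LL_{\tilde X}=\del\circ\operatorname{ev}_0$ holds on the nose rather than up to sign. Once that is in hand, the statement is immediate; no new analysis of moduli spaces is needed, since everything is a formal consequence of $\tilde X(0)=0$ together with the absence of the $r=0$ term.
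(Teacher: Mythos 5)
Your argument is correct and in substance identical to the paper's proof: the paper directly computes $(\LL_{\tilde{X}}\alpha)(0) = \alpha(0)\circ\del - \del\circ\alpha(0)$ by expanding the Lie derivative on basis tensors and using $\tilde{X}(0)=0$ to annihilate the term $\tilde{X}\bigl(\alpha(\cdots)\bigr)$, which is exactly your chain-map identity $\operatorname{ev}_0\circ\LL_{\tilde{X}}=\del\circ\operatorname{ev}_0$ packaged via the length filtration. Both rest on the same two facts---$\tilde{X}$ has no constant term, and its linear part $\tilde{X}_1$ is the cylindrical contact cohomology differential $\del$---so the filtration picture is a reorganization rather than a different route.
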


\begin{proof}
For the proof we show that the Lie derivative of a tensor field $\alpha\in\T^{(r,s)}\tilde{\Q}$ in the direction of $\tilde{X}$ at $q=0$ can be computed from the restriction of the tensor $\alpha(0)$ at $q=0$  and the cylindrical contact homology differential $\del: T_0\tilde{\Q}\to T_0\tilde{\Q}$ by $$(\LL_{\tilde{X}}\alpha)(0) \;=\; \alpha(0)\circ\del \,-\, \del\circ\alpha(0),$$ where $\del$ denotes the obvious extension (using Leibniz rule) of the boundary operator to the tensor products $T_0\tilde{\Q}^{\otimes r}$, $T_0\tilde{\Q}^{\otimes s}$. \\

First, using the definition of the Lie derivative for tensor fields, we find that
\begin{eqnarray*}
&&(\LL_{\tilde{X}}\alpha)\Big(\frac{\del}{\del q_{\gamma_0}}\otimes\ldots\otimes dq_{\gamma^+_s} \Big) 
\end{eqnarray*}
is given by 
\begin{eqnarray*}
&&\Big(\tilde{X}\Big(\alpha\Big(\frac{\del}{\del q_{\gamma_0}}\otimes\ldots\otimes dq_{\gamma^+_s} \Big)\Big)\Big)
\\&&-\; \Big(\alpha\Bigl(\LL_{\tilde{X}}\frac{\del}{\del q_{\gamma_0}}\otimes\ldots\otimes dq_{\gamma^+_s}\Big)\Big)
\\&&-\; (\ldots) 
\\&&-\; (-1)^{|q_{\gamma_0}|+\ldots-|q_{\gamma^+_{s-1}}|} \Big(\alpha\Bigl(\frac{\del}{\del q_{\gamma_0}}\otimes\ldots\otimes\LL_{\tilde{X}}dq_{\gamma^+_s}\Bigr)\Big).
\end{eqnarray*}
Now employing that $\tilde{X}(0)=0$, we find that the first summand involving the derivative of $\alpha$ vanishes and only the other summands involving only the value of the tensor field at the point zero, 
\begin{eqnarray*} 
&&-\; \alpha(0)\Big(\Big(\LL_{\tilde{X}}\frac{\del}{\del q_{\gamma_0}}\Big)(0)\otimes\ldots\otimes dq_{\gamma^+_s}\Big)
\\&&-\; (\ldots)
\\&&-\; (-1)^{|q_{\gamma_0}|+\ldots-|q_{\gamma_{s-1}}|} \alpha(0)\Bigl(\frac{\del}{\del q_{\gamma_0}}\otimes\ldots\otimes (\LL_{\tilde{X}} dq_{\gamma^+_s})(0)\Bigr),
\end{eqnarray*}
remain. With the observation that the cylindrical contact homology differential $\del$ is given by 
\begin{eqnarray*} 
\frac{\del}{\del q_{\gamma^-}}\;\mapsto\; \Big(\LL_{\tilde{X}}\frac{\del}{\del q_{\gamma^-}}\Big)(0)&=&\frac{\del \tilde{X}}{\del q_{\gamma^-}}(0)\\&=&\,\sum_{\gamma^+,A}\frac{1}{\kappa_{\gamma^-}}\sharp\IM^{\gamma^+}_{\gamma^-}(A)/\IR\; t^{c_1(A)}\cdot\,\frac{\del}{\del q_{\gamma^+}}, 
\end{eqnarray*} 
the claim follows.
\end{proof}

The proof suggests that an analogous result can be shown when the point $q=0$ is replaced by any other point $q\in\tilde{\Q}$ where the cohomological vector field vanishes, $\tilde{X}(q)=0$. Using the relation between the cohomological vector field and the $L_{\infty}$-structure discussed above, note that the vanishing of $\tilde{X}$ is equivalent to requiring that $q\in\tilde{\Q}$ is a solution of the Maurer-Cartan equation for the $L_{\infty}$-structure. The boundary operator $\del: T_0\tilde{\Q}\to T_0\tilde{\Q}$ of cylindrical contact homology accordingly needs to be replaced by a deformed version of it, 
$$\del_q:\; T_q\tilde{\Q}\to T_q\tilde{\Q},\;\frac{\del}{\del q_{\gamma}}\mapsto\frac{\del \tilde{X}}{\del q_{\gamma}}(q)\;=\;d\tilde{X}(q)\Big(\frac{\del}{\del q_{\gamma}}\Big).$$ There is however a small caveat: Note that, for $\del_q$ to be of pure degree (one), we have to require that the degree of $q$ as well as the degree of the formal variable $t$ are even. \\

In order to elaborate this picture further, let us assume for a moment that the symplectic manifold $(M,\omega)$ is Calabi-Yau in the sense that $c_1(A)=0$ for all $A\in H_2(M)$. In this case it follows that the Conley-Zehnder index is independent of the chosen spanning surfaces, so that we can lift the $\IZ_2$-grading on $\tilde{\Q}=\bigoplus_k\CF_*(\phi^k)$ to a $\IZ$-grading. Using this integer grading we can then further define a so-called \emph{Euler vector field} $E\in\T^{(1,0)}\tilde{\Q}$ given by $$\tilde{E}\;=\;\sum_{\gamma} |q_{\gamma}|\cdot\; q_{\gamma}\frac{\del}{\del q_{\gamma}}\;\;\textrm{with}\;\;|q_{\gamma}|=-\CZ(\gamma)-2(\dim M-2).$$ The fact that the cohomological vector field $\tilde{X}$ is counting holomorphic curves with Fredholm index one translates into the algebraic fact that $\LL_{\tilde{E}} \tilde{X}=[\tilde{E},\tilde{X}]=\tilde{X}$. 

\begin{corollary} For every point $q\in\tilde{\Q}$ in the chain space of cylindrical contact homology where the cohomological vector field $\tilde{X}\in\T^{(1,0)}\tilde{\Q}$ as well as the Euler vector field $\tilde{E}\in\T^{(1,0)}\tilde{\Q}$ vanish, $\tilde{X}(q)=0=\tilde{E}(q)$, we can define a deformed version of cylindrical contact homology $\HC_*^{\cyl,q}(M_{\phi})=H_*(T_q\tilde{\Q},\del_q)$ with the same chain space $T_q\tilde{\Q}\cong T_0\tilde{\Q}\cong\tilde{\Q}$ but deformed differential given by $\del_q=d\tilde{X}(q)$, $\del/\del q_{\gamma}\mapsto (d\tilde{X}/\del q_{\gamma})(q)$. Furthermore there again exists a natural evaluation map for tensor fields at $q$, $$\T^{(r,s)}\tilde{\Q}_{\tilde{X}}\;\to\;((\HC_*^{\cyl,q})^{\otimes s})^*\otimes (\HC_*^{\cyl,q})^{\otimes r},\, \alpha\mapsto \alpha(q).$$ \end{corollary}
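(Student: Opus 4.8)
The plan is to run the argument that produced the evaluation map at $q=0$ essentially verbatim, with the origin replaced by the given zero $q$ of $\tilde{X}$, and then to verify that the two additional hypotheses --- that $(M,\omega)$ is Calabi--Yau and that $\tilde{E}(q)=0$ --- are exactly what is needed to turn the deformed differential $\del_q$ into an operator of pure degree one. The first step is to establish $\del_q\circ\del_q=0$. Writing $\tilde{X}=\sum_{\gamma}\tilde{X}^{\gamma}(q)\,\del/\del q_{\gamma}$ in coordinates, the master equation $[\tilde{X},\tilde{X}]=2\tilde{X}^2=0$ reads, up to signs, $\sum_{\gamma}\tilde{X}^{\gamma}\,(\del\tilde{X}^{\delta}/\del q_{\gamma})=0$ for each $\delta$. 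Differentiating this identity in $q_{\eps}$ and evaluating at $q$, the term carrying second derivatives of the components of $\tilde{X}$ is annihilated by $\tilde{X}(q)=0$, and what survives is $\sum_{\gamma}(\del\tilde{X}^{\gamma}/\del q_{\eps})(q)\,(\del\tilde{X}^{\delta}/\del q_{\gamma})(q)=0$, i.e. $\del_q\circ\del_q=0$, since the matrix of $\del_q=d\tilde{X}(q)$ is exactly $\bigl((\del\tilde{X}^{\delta}/\del q_{\gamma})(q)\bigr)$. This is just the linearisation of the Maurer--Cartan equation at the solution $q$, as already anticipated in the remark following the case $q=0$; I expect no difficulty here.

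The second step is the grading. The components $\tilde{X}^{\delta}$ of the degree-one vector field $\tilde{X}$ are homogeneous of degree $1+|q_{\delta}|$ in the graded variables $q_{\gamma}$ and $t$, so $\del_q$ will be homogeneous of degree one as soon as substituting the particular point $q$ does not destroy homogeneity. Calabi--Yau-ness gives $c_1(A)=0$ for all $A\in\pi_2(M)$, so every power $t^{c_1(A)}$ occurring in $\tilde{X}$ is trivial and $t$ drops out of the grading; and $\tilde{E}(q)=0$, together with the explicit formula $\tilde{E}=\sum_{\gamma}|q_{\gamma}|\,q_{\gamma}\,\del/\del q_{\gamma}$, forces all components of $q$ in the directions $\gamma$ with $|q_{\gamma}|\neq 0$ to vanish, i.e. $q$ lies in the degree-zero part of $\tilde{\Q}$. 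Hence $\del_q$ is a degree-one endomorphism of $T_q\tilde{\Q}$, and since $\tilde{\Q}$ is a formal pointed linear space the identifications $T_q\tilde{\Q}\cong T_0\tilde{\Q}\cong\tilde{\Q}$ are canonical; combined with the first step this makes $\HC^*_{\cyl,q}(M_{\phi})=H^*(T_q\tilde{\Q},\del_q)$ a well-defined graded object with the prescribed cochain space and differential.

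The third step is the evaluation map on tensor fields, for which I would simply redo the computation underlying the case $q=0$ with $\del$ replaced by $\del_q$. Expanding $\LL_{\tilde{X}}\alpha$ by the graded Leibniz rule and evaluating at $q$, the summand $\tilde{X}\bigl(\alpha(\cdot)\bigr)$ is the derivative of a function in the direction $\tilde{X}$ and hence vanishes at $q$ because $\tilde{X}(q)=0$, while every remaining summand sees only the value $\alpha(q)$ together with $\bigl(\LL_{\tilde{X}}(\del/\del q_{\gamma})\bigr)(q)=(\del\tilde{X}/\del q_{\gamma})(q)=\del_q(\del/\del q_{\gamma})$ and the dual statement for one-forms. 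This should yield $(\LL_{\tilde{X}}\alpha)(q)=\alpha(q)\circ\del_q-\del_q\circ\alpha(q)$, with the same signs as at $q=0$ and with $\del_q$ extended to $T_q\tilde{\Q}^{\otimes r}$ and $T_q\tilde{\Q}^{\otimes s}$ by Leibniz. Thus $\alpha\mapsto\alpha(q)$ is a chain map from $(\T^{(r,s)}\tilde{\Q},\LL_{\tilde{X}})$ to $\bigl(\operatorname{Hom}(T_q\tilde{\Q}^{\otimes s},T_q\tilde{\Q}^{\otimes r}),[\del_q,\cdot]\bigr)$; passing to homology, and using the field coefficients available in the Calabi--Yau case to identify the cohomology of the target with $((\HC^*_{\cyl,q})^{\otimes s})^*\otimes(\HC^*_{\cyl,q})^{\otimes r}$, gives the asserted evaluation map.

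The genuinely new point compared with the case $q=0$, and the one I would be most careful about, is the degree bookkeeping in the second step: it is precisely what forces the two extra hypotheses, and without the Calabi--Yau condition or without $\tilde{E}(q)=0$ the operator $\del_q$ would fail to be of pure degree and ``$\HC^*_{\cyl,q}$'' would not even be a graded object. All the geometric input --- compactness, transversality, and $\tilde{X}^2=0$ --- is already available from \cite{F}, so no new holomorphic-curve analysis is needed; the remaining work is purely the homological-algebra adaptation sketched above, and I do not anticipate obstacles beyond keeping track of the Koszul signs.
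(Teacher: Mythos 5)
Your proof is correct, and the only point where it meaningfully diverges from the paper's argument is the verification that $\del_q\circ\del_q=0$. You obtain this by writing $\tilde{X}=\sum_\gamma\tilde{X}^\gamma\,\del/\del q_\gamma$ in coordinates, differentiating the master equation $\sum_\gamma\tilde{X}^\gamma\,\del\tilde{X}^\delta/\del q_\gamma=0$ once more in $q_\eps$, and using $\tilde{X}(q)=0$ to kill the second-derivative terms, leaving $\sum_\gamma(\del\tilde{X}^\gamma/\del q_\eps)(q)(\del\tilde{X}^\delta/\del q_\gamma)(q)=0$, i.e.\ the linearisation of the Maurer--Cartan equation at its solution $q$. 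The paper instead reuses Proposition 2.2 in tensorial form: it observes that $d\tilde{X}\in\T^{(1,1)}\tilde{\Q}$ is $\LL_{\tilde{X}}$-closed because $\LL_{\tilde{X}}\tilde{X}=[\tilde{X},\tilde{X}]=0$ and $d$ commutes with $\LL_{\tilde{X}}$, and then applies the evaluation formula $(\LL_{\tilde{X}}\alpha)(q)=\alpha(q)\circ\del_q\pm\del_q\circ\alpha(q)$ to $\alpha=d\tilde{X}$, identifying the result with $2\,\del_q\circ\del_q$. The two routes are equivalent in content (both are the linearisation of $\tilde{X}^2=0$ at the zero $q$), but the paper's version is an instance of the already-proven lemma and stays inside the tensor-field formalism, while yours is an independent, more elementary coordinate calculation. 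Your second and third steps --- that Calabi--Yau-ness together with $\tilde{E}(q)=0$ force $\del_q$ to have pure degree one, and that repeating the Proposition~2.2 expansion verbatim at $q$ yields $(\LL_{\tilde{X}}\alpha)(q)=\alpha(q)\circ\del_q-\del_q\circ\alpha(q)$ and hence the evaluation map --- match the paper's (mostly implicit) reasoning; the paper only spells out the $\del_q^2=0$ step and treats the rest as a direct reprise of Proposition~2.2, so your write-up is in fact more explicit on those points.
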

  
\begin{proof} For the proof it remains to show that the deformed differential $\del_q$ gives indeed a boundary operator on the chain space $T_q\tilde{\Q}\cong\tilde{\Q}$ of cylindrical contact homology. For this we apply the computation from above to the tensor field $d\tilde{X}=\sum_{\gamma} (\del\tilde{X}/\del q_{\gamma})\cdot dq_{\gamma}\in\T^{(1,1)}\tilde{\Q}$. First, since $\LL_{\tilde{X}}\tilde{X}=[\tilde{X},\tilde{X}]=0$ and the exterior derivative $d$ commutes with the Lie derivative $\LL_{\tilde{X}}$, we indeed obtain that $\LL_{\tilde{X}}(d\tilde{X})=0$. Since $\tilde{X}(q)=0$, it follows that $$d\tilde{X}(q)\Big(\Big(\LL_{\tilde{X}}\frac{\del}{\del q_{\gamma}}\Big)(q)\otimes dq_{\gamma^+}\Big)\,+\, d\tilde{X}(q)\Big(\frac{\del}{\del q_{\gamma}}\otimes (\LL_{\tilde{X}}dq_{\gamma^+})(q)\Big)\;=\;0$$ for all closed orbits $\gamma$, $\gamma^+$. But since the latter expression can be shown to agree with $2 \cdot (d\tilde{X}(q)\circ d\tilde{X}(q))(\del/\del q_{\gamma}\otimes dq_{\gamma^+})$, the claim follows. \end{proof}

\section{Big pair-of-pants product}

As already mentioned, it is the main goal of this paper to generalize the big quantum product on quantum homology to a big pair-of-pants product in Hamiltonian Floer theory. As with the $L_{\infty}$-structure and the differential graded manifold structure, we define the corresponding big pair-of-pants product and the resulting cohomology F-manifold structure on the cylindrical contact homology $\HC^{\cyl}_*(M_{\phi})$ which agrees with the sum of the Floer cohomologies $\HF_*(\phi^k)$, $k\in\IN$ . Furthermore note that, as the new moduli spaces are going to be defined as subsets of the moduli spaces from full contact homology used in the definition of $\tilde{X}$, \emph{we immediately assume that $M$ is a symplectic manifold with contact-type boundary and the Hamiltonian function has asymptotic linear slope in the cylindrical end of the completion.} \\

We start by recalling, see proposition \ref{Lie} and also (\cite{F1}, proposition 2.2), that the map $\tilde{u}=(h,u):\Si\to\IR\times M_{\phi}\cong\RS\times M$ is $\tilde{J}$-holomorphic precisely when $h:\Si\to\RS$ is holomorphic and $u:\Si\to M$ satisfies the Floer equation $\CR_{J,H,h}(u)=\Lambda^{0,1}(du+X^H_{h_2}\otimes dh_2)$. Fixing the complex structure $j$ on $\Si$, that is, the positions of the punctures $z_0,\ldots,z_{r-1}\in\IC$, and forgetting the map $u$, we are hence left with holomorphic functions $h$ on $\IC$ with $r$ zeroes $z_0,\ldots,z_{r-1}$ of predescribed orders $k_0,\ldots,k_{r-1}$. Since such a holomorphic map always exists and is uniquely determined up to a $\IC^*\cong\RS$-factor, it follows that, after dividing out the natural $\IR$-action in the target, there is an $S^1$-family of maps $h$, which becomes visible in terms of the asymptotic markers. For the latter observe that the asymptotic markers are fixed by the special points on the closed orbits. Recall that we choose on each $k$-periodic orbit $k$ special points (each with weight $1/k$ to cure for the resulting overcounting) which are naturally given by the intersection of the orbit with the fibre over $\{0\}\times M\subset S^1\times M\cong M_{\phi}$. \\
 
In contrast to the moduli spaces considered in \cite{R}, we hence see that the conformal structure on the punctured Riemann sphere is allowed to vary and we allow the asymptotic markers above and below to rotate simultaneously. Generalizing this, the new moduli spaces used to define the \emph{big pair-of-pants product} shall consist of Floer solutions $u:\Si\to M$ with an arbitrary number of negative punctures and varying conformal structure but with fixed asymptotic markers at all punctures (depending on the underlying conformal structure). Reduced to the essence, for the definition of the new moduli spaces for the big pair-of-pants product we going to use that the fact that, in contrast the well-known case of contact manifolds, there exists a natural projection from $M_{\phi}\cong S^1\times M$ to the circle. Instead of fixing the asymptotic markers, we equivalently fix the induced branched covering map $h=(h_1,h_2):\Si\to\RS$. 

\begin{center}
\includegraphics[height=4cm]{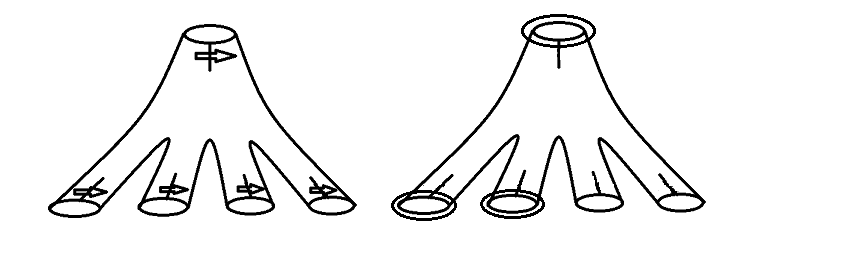}\\
\small{Holomorphic curves for the cohomological vector field (left) and for the big pair-of-pants product (right)}
\end{center}

For this we introduce an additional marked point $z^*$ on the underlying punctured Riemann sphere $\Si=S^2\backslash\{z_0,\ldots,z_{r-1},\infty\}$. As in \cite{EGH} we denote by $\IM^{\gamma^+}_1(\Gamma)$ the moduli space of $\tilde{J}$-holomorphic curves $(\tilde{u},z_0,\ldots,z_{r-1},z^*)$ in $\RS\times M$ with one (unconstrained) additional marked point. Using the natural diffeomorphism $M_{\phi}\cong S^1\times M$, note that there exists an evaluation map from $\IM^{\gamma^+}_1(\Gamma)$ to $S^1$, given by mapping $((h,u),z_0,\ldots,z_{r-1},z^*)$ to $h_2(z^*).$ In order to fix the asymptotic markers and hence the holomorphic map $h$ using the latter evaluation map, it then remains to constrain the additional marked point a priori without using the map. \\

We first describe how the well-known pair-of-pants product on Floer homology appears in our framework. For this observe that the pair-of-pants product gives unique maps $\star_0: \HF_*(\phi^k)\otimes\HF_*(\phi^{\ell})\to\HF_*(\phi^{k+\ell})$ for all $k,\ell\in\IN$, see e.g. \cite{Sch} and \cite{MDSa}. Since $\HC_*^{\cyl}(M_{\phi})\cong\bigoplus_k\HF_*(\phi^k)$, it immediately follows that the pair-of-pants product defines a product on the cylindrical contact homology of every Hamiltonian mapping torus. \\

Observe that the moduli space $\IM^{\gamma^+}(\gamma_0,\gamma_1)$ can equivalently be defined as the set of $\tilde{J}$-holomorphic maps $\tilde{u}:\Si\to M$ starting from the three punctured sphere $\Si=S^2\backslash\{0,1,\infty\}$. Using these unique coordinates, we can fix the position of the additional marked $z^*$ on $\Si$ a priori for each moduli space $\IM^{\gamma^+}(\gamma_0,\gamma_1)$. With this we define the submoduli space $\IM^{\gamma^+}_{\gamma_0,\gamma_1}\subset\IM^{\gamma^+}_1(\gamma_0,\gamma_1)$ of $\tilde{J}$-holomorphic maps $((h,u),z^*)\in\IM^{\gamma^+}_1(\gamma_0,\gamma_1)$ where the additional marked $z^*$ is constrained using the unique coordinates given by the three punctures $z_0=0$, $z_1=1$ (and $z_{\infty}=\infty$) and required to get mapped to $0\in S^1$ under the map $h_2$.  

\begin{proposition} Using the new moduli spaces $\IM^{\gamma^+}_{\gamma_0,\gamma_1}$ and the natural identification of the chain space of cylindrical contact homology with the sum of the chain spaces of the Floer cohomologies, the pair-of-pants product can be defined by $$\gamma_0\;\tilde{\star}_0\; \gamma_1 \;=\; \sum_{\gamma^+,A} \frac{1}{\kappa_{\gamma_0}\kappa_{\gamma_1}}\cdot\frac{1}{k}\cdot \#\IM_{\gamma_0,\gamma_1}^{\gamma^+}(A) \cdot\; \gamma^+ t^{c_1(A)}.$$  \end{proposition}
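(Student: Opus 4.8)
The plan is to identify the newly defined moduli space $\IM^{\gamma^+}_{\gamma_0,\gamma_1}$ with the standard moduli space of Floer pair-of-pants solutions and to track the combinatorial weights through this identification. First I would use Proposition 3.1: every $\tilde J$-holomorphic curve $\tilde u = (h,u)$ in $\IM^{\gamma^+}_1(\gamma_0,\gamma_1)$ decomposes into a holomorphic branched cover $h\colon\Si\to\RS$ and a Floer solution $u\colon\Si\to M$ for the domain-dependent Hamiltonian term $X^H_{h_2}\otimes dh_2$. On the three-punctured sphere $\Si = S^2\setminus\{0,1,\infty\}$ with orders of vanishing $k_0$ at $z_0=0$, $k_1$ at $z_1=1$, and a pole of order $k=k_0+k_1$ at $z_\infty=\infty$, the function $h$ (viewed as a meromorphic function $\IC\to\IC^*$) is forced to be $h(z)=c\cdot z^{k_0}(z-1)^{k_1}$ for some $c\in\IC^*$; hence it is unique up to the $\IC^*\cong\RS$-scaling in the target. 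After dividing by the $\IR$-action there remains an $S^1$ of choices of $h$. The key point is that imposing $h_2(z^*)=0\in S^1$ at the \emph{a priori fixed} marked point $z^*\in\Si$ (which is possible since the three punctures give $\Si$ canonical coordinates) rigidifies this $S^1$: exactly $k$ of the residual $S^1$-family of maps $h$ satisfy $h_2(z^*)=0$, because $\arg h(z^*)$ winds $k=\deg h$ times around $S^1$ as $c$ runs over the unit circle. This is precisely the mechanism by which the moduli space with fixed asymptotic markers (equivalently, fixed $h$) is recovered from the one with simultaneously rotating markers, as anticipated in the discussion preceding the statement.

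Next I would match this with the count underlying the pair-of-pants product $\star_0\colon\HF^*(\phi^k)\otimes\HF^*(\phi^\ell)\to\HF^*(\phi^{k+\ell})$ on Floer cohomology, in the form of (\cite{Sch}, \cite{MDSa}): once $h$ is fixed, a choice of $h$ with prescribed zeros/pole determines a pair-of-pants surface with cylindrical ends, and the remaining equation on $u$ is exactly the Floer pair-of-pants equation, with the $C^0$-bounds from \cite{R} guaranteeing compactness as in the earlier sections. So for fixed asymptotics $\gamma_0,\gamma_1,\gamma^+$ and fixed homology class $A$, the rigid elements of $\IM^{\gamma^+}_{\gamma_0,\gamma_1}(A)$ are in bijection with (rigid Floer pair-of-pants solution) $\times$ ($k$ choices of normalizing constant $c$), which accounts for the factor $1/k$: dividing the curve count by $k$ returns the genuine Floer count. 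Then I would feed this through (\cite{F}, Proposition 2.1), under which the cochain generator $\gamma$ of cylindrical contact cohomology corresponding to a $k$-periodic orbit is identified with $(x\pm\dots\pm\phi^{k-1}(x))/k$ in Floer cohomology; the factors $1/\kappa_{\gamma_0}$, $1/\kappa_{\gamma_1}$ are the standard multiplicity normalizations of the asymptotic orbits from the SFT formalism of \cite{EGH}, appearing exactly as in the definition of $\tilde X$, and I would check that with these three normalizing factors the displayed formula reproduces, under the identification, the pair-of-pants product $\star_0$ on $\bigoplus_k\HF^*(\phi^k)$.

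The main obstacle will be the bookkeeping of the rational weights and the $\IZ_k$-symmetry: I need to verify that the product of the special-point weights $1/k_0$, $1/k_1$ carried by the markers at the negative punctures, combined with the explicit factor $1/k$ in front, is consistent with the weight $1/\kappa_{\gamma_0}\kappa_{\gamma_1}\cdot 1/k$ stated, and that the resulting product on $\HC^*_{\cyl}(M_\phi)=\bigoplus_k\HF^*(\phi^k)$ really respects the $\IZ_k$-invariant subspaces, i.e.\ lands in the correct summand $\HF^*(\phi^{k+\ell})$ and is independent of which of the $k$ (resp.\ $\ell$) special points on $\gamma_0$ (resp.\ $\gamma_1$) is used to fix the marker. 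Here I would argue that rotating the special point on an input orbit rotates the constant $c$ correspondingly, permuting the $k$ preimages of $0\in S^1$ under $h_2$ among themselves, so the total count is unchanged — this is the same averaging argument used in the chain-level isomorphism $\gamma\mapsto(x\pm\dots\pm\phi^{k-1}(x))/k$ recalled in Section 1. A secondary point worth a line is that, as before, one uses the domain-dependent Hamiltonian perturbations from the appendix of \cite{F} to achieve transversality for the $\IM^{\gamma^+}_{\gamma_0,\gamma_1}(A)$, and the $C^0$-estimates of \cite{R} together with \cite{BEHWZ} to guarantee that the relevant zero-dimensional moduli spaces are finite, so that the sum defining $\gamma_0\,\tilde\star_0\,\gamma_1$ is well-defined over $\Lambda$.
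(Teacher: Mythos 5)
Your overall plan matches the paper's: invoke Proposition 3.1 to split $\tilde u = (h,u)$ into a branched cover $h$ and a Floer solution $u$, identify $u$ with a Floer pair-of-pants solution once $h$ is fixed, and track the combinatorial weights. But the step that is meant to produce the factor $1/k$ contains a genuine error, and the mechanism you describe is not the one that actually works.

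You write $h(z) = c\cdot z^{k_0}(z-1)^{k_1}$ and claim that ``$\arg h(z^*)$ winds $k=\deg h$ times around $S^1$ as $c$ runs over the unit circle,'' so that exactly $k$ of the residual $S^1$-family of maps satisfy $h_2(z^*)=0$. This is false: since $z^*$ is a fixed point, $h(z^*) = c\cdot(z^*)^{k_0}(z^*-1)^{k_1}$, so $\arg h(z^*) = \arg c + \mathrm{const}$ and the winding number is $1$, not $k$. Hence there is only \emph{one} admissible value of $c$ (modulo the $\IR$-action), not $k$. The ``bijection with $k$ choices of normalizing constant $c$'' you use to explain the $1/k$ therefore does not exist. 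The correct source of the $1/k$, as the paper explains, is at the level of asymptotic markers: in the symmetrized setup one places $k$ special points (each of weight $1/k$) on the $k$-periodic orbit $\gamma^+$, and for the unique admissible map $h$ there are $k$ distinct asymptotic markers at the positive puncture compatible with it. Equivalently, one is counting points of the quotient $\IM^{\gamma^+}_{\gamma_0,\gamma_1}/\IZ_k$, where $\IZ_k$ rotates the asymptotic marker at the positive puncture by $1/k$. A degree-$k$ winding does appear in the picture, but it is between the asymptotic marker at $\gamma^+$ and the constant $c$ (rotating the marker once makes $c$, hence $\arg h(z^*)$, wind $k$ times), not between $c$ and $\arg h(z^*)$; your statement has the roles reversed.

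A secondary point: in the last paragraph you conflate the periods $k_0,k_1$ of the orbits with their covering multiplicities $\kappa_{\gamma_0},\kappa_{\gamma_1}$. These are distinct ($\kappa_\gamma$ is the multiplicity of $\gamma$ as a cover of its underlying simple orbit, while the period is what indexes the $\IZ_k$-weighting of special points), and the $1/\kappa_{\gamma_0}\kappa_{\gamma_1}$ factors in the statement are the standard SFT normalizations already present in the formula for $\tilde X$, not a repackaging of the special-point weights $1/k_0,1/k_1$.
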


\begin{proof} Following the above discussion, see also proposition \ref{Lie}, the moduli space $\IM^{\gamma^+}(\Gamma)$ can be identified with the moduli space of Floer solutions $u:\Si\to M$ starting from the three-punctured sphere with unconstrained asymptotic marker at the positive puncture which in turn constrains the asymptotic markers at the two negative punctures via the induced map $h$. While the submoduli space $\IM^{\gamma^+}_{\gamma_0,\gamma_1}\subset\IM^{\gamma^+}(\gamma_0,\gamma_1)$ is precisely characterized by the fact that the map $h$ is fixed, note that there is just a $k$-to-one correspondence between $\tilde{J}$-holomorphic curves in $\IM^{\gamma^+}_{\gamma_0,\gamma_1}$ and Floer solutions $u:\Si\to M$ with fixed asymptotic markers at all punctures, since there is just a $k$-to-one correspondence between asymptotic markers at the positive puncture and corresponding maps $h$ to the cylinder. 
Note that in our symmetrized definition, we have $k$ special points on the $k$-periodic orbit $\gamma^+$ given by the intersection with the fibre $\{0\}\times M \subset  S^1\times M\cong M_{\phi}$ which in turn define $k$ asymptotic markers, all with weight $1/k$. This said, the coefficient $1/k$ simply means that we count elements in the quotient $\IM^{\gamma^+}_{\gamma_0,\gamma_1}/\IZ_k$ under the natural $\IZ_k$-action given by rotating the asymptotic marker at the positive puncture by $1/k\in S^1=\IR/\IZ$. \end{proof}

When there are more than three marked points on the sphere, such canonical coordinates only exist whenever one selects three marked points from the given $r$ marked points. \emph{It is the key observation for our definition of the big pair-of-pants product that (as for the big quantum product but unlike for the Gromov-Witten potential) such choice of three special punctures (the positive puncture and two of the negative punctures) is natural.} \\

To this end, without loss of generality, let us assume that we use the first two and the last marked point to define coordinates by setting $z_0=0$, $z_1=1$ and $z_{\infty}=\infty$. 

\begin{definition} After fixing the position of the additional marked point $z^*$ using the unique coordinates on punctured sphere given by the three special punctures, the new moduli spaces $\IM^{\gamma^+}_{\gamma_0,\gamma_1}(\Gamma)\subset\IM^{\gamma^+}(\gamma_0,\gamma_1,\Gamma)$ consists of those (equivalence classes of) tuples $(\tilde{u},0,1,z_2,\ldots,z_{r-1})$ with $\tilde{u}=(h_1,h_2,u):\Si\to\RS\times M$, where $z^*$ gets mapped to $0\in S^1$ under $h_2:\Si\to S^1$. \end{definition} 

In order to obtain a product that is already (graded) commutative on the chain level, let us from now on assume that $z^*$ is chosen symmetrically in such a way that it does not change when the roles of $z_0$ and $z_1$ (in fixing the coordinates) are interchanged. While in the case of three punctures from before we could directly use the resulting coordinates to fix the position of the additional marked point for every moduli space $\IM^{\gamma^+}_1(\gamma_0,\gamma_1)$, in the case of more than three punctures we need to make a genericity assumption. The reason is that, in contrast to the case of only three punctures, now we have to explicitly exclude that the additional marked point coincides with one of the remaining punctures as the $\tilde{J}$-holomorphic curve varies inside the moduli space. In order to see that this is possible, it is crucial to observe that we are only interested moduli spaces of real dimension zero or one. \\

\begin{center}
\includegraphics[height=4cm]{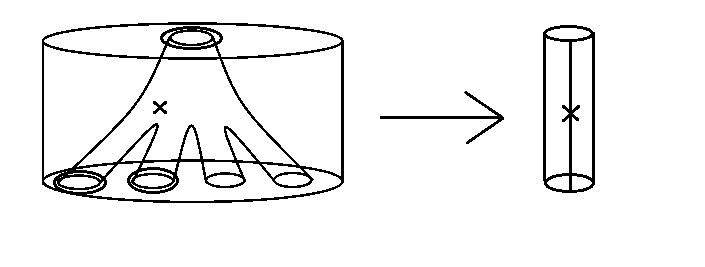}\\
\small{Using three special punctures we can fix a special point $z^*$ and require that it gets mapped to $0\in S^1$.}
\end{center}

In the appendix we show that, by allowing the Hamiltonian $H$ and the compatible almost complex structure $J$ to vary in a domain-dependent way, the forgetful map from the resulting universal moduli space $\widetilde{\IM}^{\gamma^+}_{\gamma_0,\gamma_1}(\Gamma)$ to the moduli space $\IM_{r+1}$ of complex structures on the punctured Riemann surface is a submersion. In particular, it follows that the image of  $\widetilde{\IM}^{\gamma^+}_{\gamma_0,\gamma_1}(\Gamma)$ transversally meets the stratum $\Delta$ on $\IM_{r+1}$, defined as the set of complex structures where the additonal marked point $z^*$ agrees with one of the punctures. Since, by Sard's theorem, this continues to hold for generic choices of domain-dependent Hamiltonians and compatible almost complex structures, we find that the at most one-dimensional image of the moduli space $\IM^{\gamma^+}_{\gamma_0,\gamma_1}(\Gamma)$ under the forgetful map does not meet the forbidden stratum $\Delta\subset\IM_{r+1}$, possibly after perturbing $H$ and $J$. \\

Note that the new moduli spaces $\IM^{\gamma^+}_{\gamma_0,\gamma_1}(\Gamma)$ are indeed  generalizations of the moduli spaces $\IM^{\gamma^+}_{\gamma_0,\gamma_1}$ in the sense that  $\IM^{\gamma^+}_{\gamma_0,\gamma_1}=\IM^{\gamma^+}_{\gamma_0,\gamma_1}(\emptyset)$. Summarizing, this motivates the following 

\begin{definition} On the chain level, the \emph{big pair-of-pants product} is defined to be the $(1,2)$-tensor field $\tilde{\star}\in\T^{(1,2)}\tilde{\Q}$ given by $$\sum_{\gamma^+,\gamma_0,\gamma_1}\Bigl(\sum_{\Gamma,A} \frac{1}{(r-2)!} \frac{1}{\kappa_{\gamma_0}\kappa_{\gamma_1}\kappa^{\Gamma}} \cdot\frac{1}{k}\cdot\#\IM^{\gamma^+}_{\gamma_0,\gamma_1}(\Gamma,A)\cdot \;t^{c_1(A)}q^{\Gamma}\Bigr)dq_{\gamma_0}\otimes dq_{\gamma_1}\otimes\frac{\del}{\del q_{\gamma^+}}$$ where $k$ is the period of the closed orbit $\gamma^+$. \end{definition}

In analogy to Gromov-Witten theory, from $\IM^{\gamma^+}_{\gamma_0,\gamma_1} \;=\; \IM^{\gamma^+}_{\gamma_0,\gamma_1}(\emptyset)$ it follows that the big pair-of-pants product is indeed a deformation of the classical pair-of-pants product in the sense that at $q=(q_{\gamma})=0$ it agrees with the small product, $$\frac{\del}{\del q_{\gamma_0}}\;\tilde{\star}_0\;\frac{\del}{\del q_{\gamma_1}}\;=\; \sum_{\gamma^+}\Bigl(\sum_A\frac{1}{\kappa_{\gamma_0}\kappa_{\gamma_1}}\cdot\frac{1}{k}\cdot\#\IM^{\gamma^+}_{\gamma_0,\gamma_1}(A)\cdot\;t^{c_1(A)}\Bigr)\frac{\del}{\del q_{\gamma^+}}.$$ For this recall that we again identify the chain space of cylindrical contact homology with the tangent space $T_0\tilde{\Q}$ at zero by identifying each closed Reeb orbit $\gamma$ with the tangent vector $\del/\del q_{\gamma}$.  \\

\section{Cohomology F-manifolds}

\noindent\emph{Master equation}\\

Like for the small pair-of-pants product, we can only expect the big pair-of-pants product to satisfy algebraic properties like associativity and commutativity when viewing it as an element in some homology. The main step is to show that the big pair-of-pants product indeed defines an element in some homology: Indeed, it descends to a vector field product in $\T^{(1,2)}\tilde{\Q}_{\tilde{X}}$ on the differential graded manifold $\tilde{\Q}_{\tilde{X}}=(\tilde{\Q},\tilde{X})$.

\begin{proposition}\label{master} The big pair-of-pants product $\tilde{\star} \in \T^{(1,2)}\tilde{\Q}$ and the cohomological vector field $\tilde{X}\in\T^{(1,0)}\tilde{\Q}$ from contact homology satisfy $$\LL_{\tilde{X}} \tilde{\star} = 0.$$ \end{proposition}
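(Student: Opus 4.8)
The plan is to run the standard mechanism behind WDVV-type relations, now in the SFT language: analyse the codimension-one boundary of the compactified \emph{one-dimensional} moduli spaces underlying $\tilde\star$ and read off $\LL_{\tilde X}\tilde\star$ from it. Concretely, I would fix orbits $\gamma^+,\gamma_0,\gamma_1$, a tuple $\Gamma$ and a class $A$ so that $\IM^{\gamma^+}_{\gamma_0,\gamma_1}(\Gamma,A)$ has index two. As for the moduli spaces defining the cohomological vector field $\tilde X$, this space compactifies to a compact one-manifold with boundary: compactness follows by combining the SFT compactness of \cite{BEHWZ} with the $C^0$-estimates of \cite{R} (which confine the Floer solutions $u:\Si\to M$, hence the lifts $\tilde u:\Si\to\IR\times M_\phi$, to a compact part of the open mapping torus) and the classical facts on branched covers of the cylinder controlling the component $h$. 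The transversality result of the appendix --- the forgetful map to $\IM_{r+1}$ is a submersion, so the image of the moduli space avoids the forbidden stratum $\Delta$ where $z^*$ meets a puncture --- then guarantees, after perturbing the domain-dependent pair $(H,J)$ if needed, that the constraint $h_2(z^*)=0\in S^1$ stays transverse along this one-manifold.

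The heart of the argument is the enumeration of $\del^1\IM^{\gamma^+}_{\gamma_0,\gamma_1}(\Gamma,A)$. By \cite{BEHWZ} it consists of two-level holomorphic buildings. The crucial geometric point is that $z^*$, together with the three distinguished punctures $\gamma^+,\gamma_0,\gamma_1$ carrying the coordinates used to constrain $z^*$, stays on a single level --- the \emph{constrained level} --- so that each building is a product of a moduli space of the same constrained type $\IM^{\gamma^+}_{\gamma_0,\gamma_1}(\cdots)$ with a moduli space $\IM^{\gamma}(\cdots)$ of the type counted by $\tilde X$. Organising these strata by the puncture of the constrained level at which the $\tilde X$-level is attached gives exactly three families: (A) the $\tilde X$-level glued at one of the free negative punctures among $\Gamma$, with the constrained level on top; (B) the $\tilde X$-level glued at $\gamma_0$ or at $\gamma_1$, again with the constrained level on top; (C) the $\tilde X$-level glued at the positive puncture $\gamma^+$, with the constrained level on the bottom.

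Next I would spell out $\LL_{\tilde X}\tilde\star$ for $\tilde\star=\sum q^\Gamma\,dq_{\gamma_0}\otimes dq_{\gamma_1}\otimes\del/\del q_{\gamma^+}$, exactly as in the proof of the evaluation proposition of section 2, obtaining four groups of terms: $\tilde X$ differentiating the coefficients $q^\Gamma$; $\tilde X$ acting on $dq_{\gamma_0}$ and on $dq_{\gamma_1}$ via $\LL_{\tilde X}dq_\gamma=d(\tilde X q_\gamma)$; and $\tilde X$ acting on $\del/\del q_{\gamma^+}$ via $[\tilde X,\del/\del q_{\gamma^+}]=-\del_{\gamma^+}\tilde X$. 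Family (A) reproduces the first group, family (B) the second and third, and family (C) the last, the combinatorial weights $1/\kappa$, $1/k$ and $1/(r-2)!$ in the definition of $\tilde\star$ matching up under gluing. Since the signed boundary count of a compact one-manifold vanishes, summing over all $\gamma^+,\gamma_0,\gamma_1,\Gamma,A$ and invoking the gluing theorem together with the compatibility of coherent orientations (in the conventions of \cite{BM}) shows that the coefficient of every monomial $q^\Gamma\,dq_{\gamma_0}\otimes dq_{\gamma_1}\otimes\del/\del q_{\gamma^+}$ in $\LL_{\tilde X}\tilde\star$ vanishes, which is the claim.

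The hard part will be precisely this bridge in the middle step: verifying that in every codimension-one stratum the three distinguished punctures and $z^*$ genuinely lie on one level --- so that the breaking really is a product of a constrained moduli space with an $\tilde X$-moduli space --- and that no spurious strata occur, such as $z^*$ slipping onto a ghost component or onto the node, or the constrained level degenerating further. Once the appendix's transversality for the $z^*$-constraint is in force these phenomena are all of codimension at least two, but tracking them carefully, and orienting everything so that families (A)--(C) assemble with the correct relative signs into $\LL_{\tilde X}\tilde\star$, is where the real work lies; the analytic ingredients (transversality, compactness, gluing) are supplied by \cite{F}, \cite{F1} and \cite{BEHWZ}.
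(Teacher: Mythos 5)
Your proposal follows the paper's proof essentially verbatim: you enumerate the codimension-one boundary of the one-dimensional moduli spaces $\IM^{\gamma^+}_{\gamma_0,\gamma_1}(\Gamma,A)$ into two-level buildings (the paper's Lemma~4.2 lists four types, which are exactly your families (A), (B) — merging the $\gamma_0$- and $\gamma_1$-cases — and (C)), match these to the four Leibniz-rule contributions to $\LL_{\tilde X}\tilde\star$, and conclude by the vanishing of the signed boundary count, using BEHWZ compactness with Ritter's $C^0$-bounds and the appendix's transversality for the $z^*$-constraint. This is the same argument as in the paper, with the only cosmetic difference that you compute $\LL_{\tilde X}\tilde\star$ directly via the Leibniz rule on the coordinate expression of the tensor rather than by evaluating it on basis vectors and covectors.
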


Of course, the proof of the theorem relies on the translation from geometry into algebra of a compactness result for the new moduli spaces. This is the content of the following lemma. We emphasize that all occuring products of moduli spaces are to be understood as direct products as in \cite{EGH}, see also the appearance of combinatorical factors in the subsequent proof of the theorem.    

\begin{lemma} By counting broken $\tilde{J}$-holomorphic curves (with signs) in the codimension-one boundary of the moduli space $\IM^{\gamma^+}_{\gamma_0,\gamma_1}(\Gamma)$ one obtains that the sum of the following terms is equal to zero,
\begin{enumerate}
\item $$\sum_{\gamma\in\Gamma'} \frac{1}{\kappa_{\gamma_0}\kappa_{\gamma_1}\kappa^{\Gamma'}}\frac{1}{k}\#\IM^{\gamma^+}_{\gamma_0,\gamma_1}(\Gamma')\cdot\frac{1}{\kappa^{\Gamma''}}\#\IM^{\gamma}(\Gamma'')$$
\item $$\sum_{\gamma_0\in\Gamma''} \frac{1}{\kappa_{\gamma'}\kappa_{\gamma_1}\kappa^{\Gamma'}}\frac{1}{k}\#\IM^{\gamma^+}_{\gamma',\gamma_1}(\Gamma')\cdot\frac{1}{\kappa_{\Gamma''}}\#\IM^{\gamma}(\Gamma'')$$ 
\item $$\sum_{\gamma_1\in\Gamma''}\frac{1}{\kappa_{\gamma_0}\kappa_{\gamma'}\kappa^{\Gamma'}}\frac{1}{k}\#\IM^{\gamma^+}_{\gamma_0,\gamma'}(\Gamma')\cdot\frac{1}{\kappa_{\Gamma''}}\#\IM^{\gamma'}(\Gamma'')$$ 
\item $$\sum_{\gamma\in\Gamma'}\frac{1}{\kappa^{\Gamma'}}\#\IM^{\gamma^+}(\Gamma')\cdot\frac{1}{\kappa_{\gamma_0}\kappa_{\gamma_1}\kappa^{\Gamma''}}\frac{1}{k'}\#\IM^{\gamma} _{\gamma_0,\gamma_1}(\Gamma''),$$ 
\end{enumerate}
where  $k$ ($k'$) is the period of $\gamma^+$ ($\gamma'$) and we take the union over all $\Gamma'$, $\Gamma''$ whose union (apart from the special orbits explicitly mentioned) is $\Gamma$.  
\end{lemma}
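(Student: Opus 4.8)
The plan is to run the standard symplectic field theory boundary analysis for the constrained moduli spaces $\IM^{\gamma^+}_{\gamma_0,\gamma_1}(\Gamma,A)$, in complete analogy with the derivation of $\tilde{X}^2=0$ from the moduli spaces $\IM^{\gamma^+}(\Gamma,A)$, but carrying along the extra constrained marked point $z^*$. Concretely: fix the total homology class $A$ (the $A$'s are suppressed in the statement of the lemma; one sums over the splitting $A=A'+A''$ at the end), pass to a one-dimensional component of $\IM^{\gamma^+}_{\gamma_0,\gamma_1}(\Gamma,A)$, compactify it to a compact one-manifold with boundary, identify its boundary with the union of the four families of broken configurations listed in (1)--(4), and conclude from the fact that the signed count of boundary points of a compact one-manifold vanishes.

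\emph{Compactness and the marked point.} Although $M$ is open, the $C^0$-bound of (\cite{R}, lemma~2.1) for the underlying Floer solution $u:\Si\to M$ forces the image of the associated $\tilde{J}$-holomorphic curve $\tilde{u}=(h,u)$ to remain in a fixed compact subset of $M_{\phi}$; hence the compactness theorem of \cite{BEHWZ} applies verbatim, as in the analysis of the cohomological vector field $\tilde{X}$, and compactifies our one-dimensional moduli space by stable holomorphic buildings. The only genuinely new point is the constrained marked point $z^*$. Here I would invoke the transversality statement from the appendix: the forgetful map from the universal moduli space to the moduli of domains $\IM_{r+1}$ is a submersion, so by Sard's theorem a generic domain-dependent perturbation of $(H,J)$ makes the (at most one-dimensional) moduli space disjoint from the real-codimension-two locus $\Delta\subset\IM_{r+1}$ on which $z^*$ agrees with a puncture. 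It follows that $h_2(z^*)=0$ remains a transverse, well-posed codimension-one constraint throughout the compactification, and that in each broken building it is imposed on exactly one level, namely the one carrying $z^*$ together with the two distinguished negative punctures $z_0,z_1$ --- a level which is therefore again a big-pair-of-pants curve (for its own positive orbit), with the $z^*$-constraint intact.

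\emph{Reading off the boundary.} By \cite{BEHWZ} the codimension-one boundary consists of two-level buildings, and the combinatorial bookkeeping --- the connecting orbit weighted by its multiplicity $\kappa$, the periods of the two positive orbits, and the factor $1/(r-2)!$ distributing the non-distinguished punctures into subsets $\Gamma'$ and $\Gamma''$ --- is exactly the one from \cite{EGH} and \cite{F}. By the previous paragraph such a building is classified by where the distinguished data $(\gamma^+,z_0,z_1,z^*)$ sit and by what the remaining level is: (i) $\gamma^+$, $z_0$, $z_1$ and $z^*$ lie on the top level, which is then a big-pair-of-pants curve $\IM^{\gamma^+}_{\gamma_0,\gamma_1}(\Gamma')$ glued along a non-distinguished negative orbit $\gamma\in\Gamma'$ to an ordinary contact-homology curve $\IM^{\gamma}(\Gamma'')$ --- this is term (1); (ii) a contact-homology curve is glued onto the distinguished puncture $z_0$, so the top big-pair-of-pants curve carries an intermediate orbit $\gamma'$ at $z_0$ and the original orbit $\gamma_0$ moves to the negative punctures $\Gamma''$ of the lower curve $\IM^{\gamma'}(\Gamma'')$ --- term (2); (iii) the same at $z_1$ --- term (3); (iv) $z_0$, $z_1$ and $z^*$ all end up on the lower level, which is then a big-pair-of-pants curve $\IM^{\gamma}_{\gamma_0,\gamma_1}(\Gamma'')$ glued to an ordinary contact-homology curve $\IM^{\gamma^+}(\Gamma')$ along $\gamma\in\Gamma'$ --- term (4). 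Trivial-cylinder levels are unstable and drop out, and multiply-covered and bad-orbit phenomena are absorbed into the $\kappa$-weights exactly as for $\tilde{X}$. The SFT gluing theorem --- applicable since all moduli spaces are regular by the appendix --- provides the converse: every such configuration is the limit of a unique end of the one-manifold, and different configurations give different ends. Equipping everything with the coherent orientations of \cite{BM}, so that each stratum carries a sign, the vanishing of the signed boundary count, summed over $A=A'+A''$, is precisely the identity $(1)+(2)+(3)+(4)=0$.

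The step I expect to be the main obstacle is exactly the isolation of the codimension-one boundary in the presence of the constrained marked point: one must be certain that the constraint $h_2(z^*)=0$ produces no boundary stratum beyond (1)--(4) --- in particular, that $z^*$ never degenerates so as to separate from the coordinate triple $z_0,z_1,z_\infty$ --- which is exactly what the appendix genericity is designed to guarantee (the relevant degeneration loci in the moduli of domains are of real codimension at least two and hence missed by a generic at most one-dimensional family). By comparison, the orientation and combinatorial-factor bookkeeping, though laborious, is routine and word-for-word identical to \cite{EGH} and \cite{F}.
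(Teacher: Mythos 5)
Your proposal is correct and follows essentially the same strategy as the paper's proof: identify the codimension-one SFT boundary of the one-dimensional moduli space via \cite{BEHWZ}, classify the broken two-level buildings by which level carries $\gamma^+$, $z_0$, $z_1$, note that the constrained marked point $z^*$ lands on the unique component carrying two or three of the special punctures, and conclude from the vanishing of the signed count of boundary points. The paper is terser (deferring the compactness argument to Proposition 5.1 and the genericity for the stratum $\Delta$ to Section 3 and the appendix), whereas you spell out these supporting inputs inline, but the decomposition into the four boundary types and the bookkeeping of the $\kappa$- and period-weights are identical.
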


\begin{proof} First, it is just a combinatorical exercise to deduce from the compactness result for the moduli space for contact homology stated above that the codimension one boundary of the moduli space $\IM^{\gamma^+}(\gamma_0,\gamma_1,\Gamma)$ has the corresponding components 
\begin{enumerate} 
\item $\IM^{\gamma^+}(\gamma_0,\gamma_1,\Gamma')\times\IM^{\gamma}(\Gamma'')$ with $\gamma\in\Gamma'$, 
\item $\IM^{\gamma^+}(\gamma,\gamma_1,\Gamma')\times\IM^{\gamma}(\Gamma'')$ with $\gamma_0\in\Gamma''$,
\item $\IM^{\gamma^+}(\gamma_0,\gamma,\Gamma')\times\IM^{\gamma}(\Gamma'')$ with $\gamma_1\in\Gamma''$,
\item $\IM^{\gamma^+}(\Gamma')\times\IM^{\gamma}(\gamma_0,\gamma_1,\Gamma'')$ with $\gamma\in\Gamma'$. 
\end{enumerate} 
For this observe that after splitting up into a two-level holomorphic curve either the three special punctures still lie on the same component (which leads to components of type 1) or there are two special punctures on one component and one special puncture on the other component (which leads to components of type 2, 3 and 4). After introducing the constrained additional marked point, it follows that it sits on the unique component which carries two or three of the special punctures. Denoting by $\IM^{\gamma^+}_{\gamma_0,\gamma_1}(\Gamma')\subset \IM^{\gamma^+}_1(\gamma_0,\gamma_1,\Gamma')$ (and so on) the moduli space of holomorphic curves with additional marked point constrained by the three special punctures and mapping to zero as before, it follows that the codimension-one boundary of the zero set $\IM^{\gamma^+}_{\gamma_0,\gamma_1}(\Gamma)$ has the analogous components
\begin{enumerate} 
\item $\IM^{\gamma^+}_{\gamma_0,\gamma_1}(\Gamma')\times\IM^{\gamma}(\Gamma'')$ with $\gamma\in\Gamma'$, 
\item $\IM^{\gamma^+}_{\gamma,\gamma_1}(\Gamma')\times\IM^{\gamma}(\Gamma'')$ with $\gamma_0\in\Gamma''$,
\item $\IM^{\gamma^+}_{\gamma_0,\gamma}(\Gamma')\times\IM^{\gamma}(\Gamma'')$ with $\gamma_1\in\Gamma''$,
\item $\IM^{\gamma^+}(\Gamma')\times\IM^{\gamma}_{\gamma_0,\gamma_1}(\Gamma'')$ with $\gamma\in\Gamma'$. 
\end{enumerate}
Concerning the combinatorical factors in the statement, observe that the multiplicities $k$ ($k'$) of $\gamma^+$ ($\gamma'$) show up because induced holomorphic maps to the cylinders are $k$- ($k'$-) fold coverings. More precisely, after defining $\IM^{\gamma^+}_{\gamma_0,\gamma_1}(\Gamma)\subset \IM^{\gamma^+}(\gamma_0,\gamma_1,\Gamma)$ by mapping the constrained additional marked point to zero, we still have $k$ possible choices for the asymptotic markers at the positive punctures. We emphasize that the underlying moduli spaces indeed naturally carry a $\IZ_k$-symmetry (due to our symmetrized definition). \end{proof}

For the proof of the theorem it remains to translate the geometrical result of the lemma into algebra. 

\begin{proof}\emph{(of the theorem)} Using the definition of the Lie derivative of higher tensors, we obtain for any choice of basis vectors $\del/\del q_{\gamma_0}$, $\del/\del q_{\gamma_1} \in \T^{(1,0)}\tilde{\Q}$ and forms $dq_{\gamma^+}\in\T^{(0,1)}\tilde{\Q}$ that 
\begin{eqnarray*} 
(\LL_{\tilde{X}}\tilde{\star})\Bigl(\frac{\del}{\del q_{\gamma_0}}\otimes \frac{\del}{\del q_{\gamma_1}}\otimes dq_{\gamma^+}\Bigr)
&=& \LL_{\tilde{X}}\Bigl(\tilde{\star}\Bigl(\frac{\del}{\del q_{\gamma_0}}\otimes \frac{\del}{\del q_{\gamma_1}}\otimes dq_{\gamma^+}\Bigr)\Bigr)\\
&-& \tilde{\star}\Bigl(\LL_{\tilde{X}}\frac{\del}{\del q_{\gamma_0}}\otimes \frac{\del}{\del q_{\gamma_1}}\otimes dq_{\gamma^+}\Bigr)\\
&-& (-1)^{|q_{\gamma_0}|} \tilde{\star}\Bigl(\frac{\del}{\del q_{\gamma_0}}\otimes \LL_{\tilde{X}}\frac{\del}{\del q_{\gamma_1}}\otimes dq_{\gamma^+}\Bigr)\\
&-& (-1)^{|q_{\gamma_0}|+|q_{\gamma_1}|} \tilde{\star}\Bigl(\frac{\del}{\del q_{\gamma_0}}\otimes \frac{\del}{\del q_{\gamma_1}}\otimes \LL_{\tilde{X}}dq_{\gamma^+}\Bigr).
\end{eqnarray*}
Now using the definition of $\tilde{\star}\in\T^{(1,2)}\tilde{\Q}$, 
$$\tilde{\star}\Bigl(\frac{\del}{\del q_{\gamma_0}}\otimes \frac{\del}{\del q_{\gamma_1}}\otimes dq_{\gamma^+}\Bigr) \;=\; 
   \sum_{\Gamma,A}\frac{1}{(r-2)!} \frac{1}{\kappa_{\gamma_0}\kappa_{\gamma_1}\kappa^{\Gamma}}\frac{1}{k}\#\IM^{\gamma^+}_{\gamma_0,\gamma_1}(\Gamma,A)\cdot\; q^{\Gamma}t^{c_1(A)}$$
we find that the first summand is given by 
\begin{eqnarray*}
&&\LL_{\tilde{X}}\bigl(\sum_{\Gamma,A}\frac{1}{(r-2)!}\frac{1}{\kappa_{\gamma_0}\kappa_{\gamma_1}\kappa^{\Gamma}}\frac{1}{k} \#\IM^{\gamma^+}_{\gamma_0,\gamma_1}(\Gamma,A)\cdot\; q^{\Gamma}t^{c_1(A)}\bigr)\\
&&= \sum_{\Gamma,A}\frac{1}{(r-2)!}\frac{1}{\kappa_{\gamma_0}\kappa_{\gamma_1}\kappa^{\Gamma}}\frac{1}{k} \#\IM^{\gamma^+}_{\gamma_0,\gamma_1}(\Gamma,A)\cdot\;\LL_{\tilde{X}} q^{\Gamma}\cdot t^{c_1(A)}. \end{eqnarray*}
Together with $$\LL_{\tilde{X}} q_{\gamma}= \tilde{X}(q_{\gamma}) = \sum_{\Gamma,A}\frac{1}{r!}\frac{1}{\kappa^{\Gamma}}\sharp\IM^{\gamma}(\Gamma,A)\cdot\; q^{\Gamma} t^{c_1(A)}$$ and using the Leibniz rule, we find that the first summand is precisely counting the boundary components of type 1,  $\IM^{\gamma^+}_{\gamma_0,\gamma_1}(\Gamma')\times\IM^{\gamma}(\Gamma'')$. For the combinatorical factors, observe that there are $\kappa_{\gamma}$ ways to glue two holomorphic curves along a multiply-covered orbit $\gamma$. \\

For the other summands, we can show in the same way that they correspond to the other boundary components. \\

Indeed, using $$\LL_{\tilde{X}}\frac{\del}{\del q_{\gamma_0}}\;=\;\frac{\del \tilde{X}}{\del q_{\gamma_0}}\;=\;\sum_{\gamma}\sum_{\Gamma,A}\frac{1}{r!}\frac{1}{\kappa_{\gamma_0}\kappa^{\Gamma}}\sharp\IM^{\gamma}((\gamma_0,\Gamma),A)\;\cdot\;q^{\Gamma} t^{c_1(A)}\cdot\frac{\del}{\del q_{\gamma}}$$ (and similar for $\gamma_1$), it follows that the second and the third summand correspond to boundary components $\IM^{\gamma^+}_{\gamma,\gamma_1}(\Gamma')\times\IM^{\gamma}(\Gamma'')$ and $\IM^{\gamma^+}_{\gamma_0,\gamma}(\Gamma')\times\IM^{\gamma}(\Gamma'')$ with $\gamma_0,\gamma_1\in\Gamma''$ of type 2. For the combinatorical factors we refer to the remark above. \\

Finally, using 
\begin{eqnarray*}
&&(\LL_{\tilde{X}}dq_{\gamma^+})\Bigl(\frac{\del}{\del q_{\gamma}}\Bigr)\;=\; -(-1)^{|q_{\gamma^+}|} \; dq_{\gamma^+}\Bigl(\frac{\del \tilde{X}}{\del q_{\gamma}}\Bigr)\\ 
&&=\; -(-1)^{|q_{\gamma^+}|} \;\sum_{\Gamma,A} \frac{1}{r!}\frac{1}{\kappa_{\gamma}\kappa^{\Gamma}}\sharp\IM^{\gamma^+}((\gamma,\Gamma),A)\;\cdot\; q^{\Gamma} t^{c_1(A)}
\end{eqnarray*} 
we find that the last summand corresponds to boundary components $\IM^{\gamma^+}(\Gamma')\times\IM^{\gamma}_{\gamma_0,\gamma_1}(\Gamma'')$ with $\gamma\in\Gamma'$ of type 3, where the combinatorical factors are treated as above.
\end{proof}

We now give the main definition of this paper. Recall that for every differential graded manifold $(\tilde{\Q},\tilde{X})$ there exists a well-defined space $\T^{(1,0)}\tilde{\Q}_{\tilde{X}}$ of vector fields. 

\begin{definition} 
A cohomology F-manifold is a differential graded manifold $(\tilde{\Q},{\tilde{X}})$ equipped with a graded commutative and associative product for vector fields $$\tilde{\star}: \T^{(1,0)}\tilde{\Q}_{\tilde{X}}\otimes\T^{(1,0)}\tilde{\Q}_{\tilde{X}}\to\T^{(1,0)}\tilde{\Q}_{\tilde{X}}.$$ 
\end{definition}

Note that, in his papers, Merkulov is working with different definitions of cohomology F-manifolds and $\operatorname{F}_{\infty}$-manifolds, a generalization of cohomology F-manifold making use of the higher homotopies of the product. Since in \cite{M1} Merkulov does not ask for any kind of integrability (in the spirit of Hertling-Manin's F-manifolds) and in \cite{M2} no longer requires the existence of Euler and unit vector fields, we have decided to leave out all these extra requirements in the paper. Furthermore note that Merkulov allows $\tilde{\Q}$ to be any formal pointed graded manifold, which is more general in the sense that each (graded) vector space naturally carries the structure of a formal pointed (graded) manifold with the special point being the origin, see \cite{K}. Finally we remark that our cohomology F-manifolds are indeed infinite-dimensional and formal in the sense that we do not specify a topology on them. \\

With this we can now state 

\begin{theorem}
The big pair-of-pants product equips the differential graded manifold from section two with the structure of a cohomology F-manifold in such a way that, at the tangent space at zero, we recover the (small) pair-of-pants product on cylindrical contact homology. 
\end{theorem}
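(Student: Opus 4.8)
The plan is to assemble the Theorem from the ingredients already established in the excerpt. There are three assertions to verify: that $\tilde{\star}$ descends to a well-defined product on $\T^{(1,0)}\tilde{\Q}_{\tilde{X}}$; that this product is graded commutative and associative on the level of the cohomology $\T^{(1,0)}\tilde{\Q}_{\tilde{X}}$; and that its evaluation at $q=0$ reproduces the small pair-of-pants product $\tilde{\star}_0$ on $\HC^*_{\cyl}(M_\phi)\cong\bigoplus_k\HF^*(\phi^k)$.

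For the first point, the key input is Proposition~4.4, which asserts $\LL_{\tilde{X}}\tilde{\star}=0$, together with the identity $\T^{(r,s)}\tilde{\Q}_{\tilde{X}}=H_*(\T^{(r,s)}\tilde{\Q},\LL_{\tilde{X}})$ from section two. First I would note that since $\tilde{\star}\in\T^{(1,2)}\tilde{\Q}$ is $\LL_{\tilde{X}}$-closed, it defines a class $[\tilde{\star}]\in\T^{(1,2)}\tilde{\Q}_{\tilde{X}}$. The fact that $\T^{(1,2)}\tilde{\Q}_{\tilde{X}}$ is the right target for a vector field product $\T^{(1,0)}\tilde{\Q}_{\tilde{X}}\otimes\T^{(1,0)}\tilde{\Q}_{\tilde{X}}\to\T^{(1,0)}\tilde{\Q}_{\tilde{X}}$ follows because contraction of tensor fields commutes with $\LL_{\tilde{X}}$ (used already in section two), so the pairing of the closed tensor $\tilde{\star}$ with two $\LL_{\tilde{X}}$-closed vector fields produces an $\LL_{\tilde{X}}$-closed vector field, and the result is independent of representatives modulo $\LL_{\tilde{X}}$-exact terms by the Leibniz rule for $\LL_{\tilde{X}}$ with respect to contraction.

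For commutativity, I would invoke the symmetric choice of the additional marked point $z^*$ made in Definition~3.8 (the point is chosen so that it is unchanged when the roles of $z_0$ and $z_1$ are interchanged): this makes $\tilde{\star}$ graded commutative already on the chain level, hence a fortiori on cohomology. Associativity is the main obstacle and is the step I expect to require the most work: it should be proven by the standard gluing argument, analyzing the codimension-one boundary of the moduli spaces $\IM^{\gamma^+}_{\gamma_0,\gamma_1}(\Gamma)$ in which two of the special negative punctures and the positive puncture are distributed among the levels, in the spirit of Lemma~4.3 but now with four special punctures (a positive one and three negative inputs). One shows that the $\LL_{\tilde{X}}$-closed tensor obtained from $(\partial/\partial q_{\gamma_0}\,\tilde{\star}\,\partial/\partial q_{\gamma_1})\,\tilde{\star}\,\partial/\partial q_{\gamma_2}$ and the one from $\partial/\partial q_{\gamma_0}\,\tilde{\star}\,(\partial/\partial q_{\gamma_1}\,\tilde{\star}\,\partial/\partial q_{\gamma_2})$ differ by an $\LL_{\tilde{X}}$-exact term coming from the boundary strata where the extra marked point $z^*$ lies on the component carrying the positive puncture together with two or three negative inputs; this is the WDVV-type relation underlying the big quantum product, and the new feature is that the breaking into multilevel curves contributes the $\tilde{X}$-exact correction rather than an ordinary extra cylinder. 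Care must be taken with the combinatorial $1/(r-2)!$, $1/k$, $1/\kappa^\Gamma$ and sign factors exactly as in the proof of Proposition~4.4, and with the genericity hypothesis from the appendix ensuring $z^*$ stays away from the forbidden stratum $\Delta$ in the relevant one- and two-dimensional moduli spaces.

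Finally, for the statement about the tangent space at zero, I would combine Proposition~3.3 — which gives an evaluation map $\alpha\mapsto\alpha(0)$ for tensor fields since $\tilde{X}(0)=0$, landing the class $[\tilde{\star}]$ in $((\HC^*_{\cyl})^{\otimes 2})^*\otimes\HC^*_{\cyl}$ — with the observation made right after Definition~3.10 that $\IM^{\gamma^+}_{\gamma_0,\gamma_1}=\IM^{\gamma^+}_{\gamma_0,\gamma_1}(\emptyset)$, so that setting all $q_\gamma=0$ kills every term with $r>2$ and leaves exactly the coefficient $\frac{1}{\kappa_{\gamma_0}\kappa_{\gamma_1}}\cdot\frac{1}{k}\cdot\#\IM^{\gamma^+}_{\gamma_0,\gamma_1}(A)$ defining $\tilde{\star}_0$ in Proposition~3.9. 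Thus $\tilde{\star}(0)=\tilde{\star}_0$ as a product on $T_0\tilde{\Q}\cong\HC^*_{\cyl}(M_\phi)$, and by Proposition~3.9 the latter is the pair-of-pants product on $\bigoplus_k\HF^*(\phi^k)$, which completes the proof.
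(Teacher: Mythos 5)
Your overall architecture matches the paper's proof: you use Proposition~4.1 and the compatibility of $\LL_{\tilde{X}}$ with tensor contraction to obtain a well-defined product on $\T^{(1,0)}\tilde{\Q}_{\tilde{X}}$, chain-level commutativity from the symmetric choice of the constrained marked point, associativity only after passing to cohomology, and Proposition~3.3 together with $\IM^{\gamma^+}_{\gamma_0,\gamma_1}=\IM^{\gamma^+}_{\gamma_0,\gamma_1}(\emptyset)$ to recover the small pair-of-pants product at $q=0$. The first, second, and last of these are handled essentially as the paper does.

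The gap is in the associativity step, which you rightly flag as the crux but leave unresolved. You gesture at a WDVV-type boundary analysis but speak throughout of a single additional marked point $z^*$; however, the moduli spaces $\IM^{\gamma^+}_{\gamma_0,\gamma_1}(\Gamma)$ carry only one constrained marked point, and their codimension-one boundary produces the $\LL_{\tilde{X}}\tilde{\star}=0$ relation already established in Proposition~4.1, not an associativity relation. What the paper actually does (its Lemma~4.6) is construct a $(1,3)$-tensor $\alpha$ from a \emph{new} family of moduli spaces $\IM^{\gamma^+}_{\gamma_0,\gamma_1,\gamma_2}(\Gamma)\subset\IM^{\gamma^+}_2(\gamma_0,\gamma_1,\gamma_2,\Gamma)$ carrying \emph{two} constrained additional marked points $z^*_1,z^*_2$, both required to map to $0\in S^1$ under $h_2$, with $z^*_1$ fixed relative to the punctures of $\gamma_0,\gamma_1,\gamma^+$ and $z^*_2$ fixed relative to those of $\gamma_0,\gamma_2,\gamma^+$. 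The codimension-one boundary then distributes these two marked points between the two levels: the stratum with $z^*_1$ on the top level and $z^*_2$ on the bottom produces $\tilde{\star}_{10,2}$, the stratum with the roles reversed produces $\tilde{\star}_{1,02}$, and the strata with both marked points on the same level assemble into $\LL_{\tilde{X}}\alpha$. Without introducing this second constrained marked point one does not get a moduli space whose boundary contains both bracketings; so as written your argument does not close, and the two-marked-point construction is a genuine missing idea, not merely a detail.
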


 \begin{proof} The proof splits up into three parts. \\

\noindent\emph{Vector field product:} Obviously the main ingredient for the proof is proposition \ref{master}. There we have shown that big pair-of-pants product $\tilde{\star} \in \T^{(1,2)}\tilde{\Q}$ and the cohomological vector field $\tilde{X}\in\T^{(1,0)}\tilde{\Q}$ from full contact homology satisfy the master equation $\LL_{\tilde{X}} \tilde{\star} = 0$. This should be seen as generalization of the master equation relating the small pair-of-pants product and the boundary operator of Floer homology, that is, cylindrical contact homology. In the same way as the latter proves that the small pair-of-pants product descends to a product on cylindrical contact homology, the master equation of proposition \ref{master} shows that the big pair-of-pants product defines an element in the tensor homology $H_*(\T^{(1,2)}\tilde{\Q},\LL_{\tilde{X}})$. Following the discussion of the concept of differential graded manifolds in section two, it follows from the compatibility of the Lie derivative with the contraction of tensors that the big pair-of-pants product defines a $(1,2)$-tensor field $\tilde{\star}\in\T^{(1,2)}\tilde{\Q}_{\tilde{X}}$, that is, a product of vector fields on the differential graded manifold $(\tilde{\Q},\tilde{X})$. \\
 
\noindent\emph{Commutativity:} In order to prove the main theorem, it remains to show that this product is commutative and associative (in the graded sense). In order to see that the big pair-of-pants product is already commutative \emph{on the chain level}, recall that we have already assumed that the position of the constrained additional marked point is chosen to be invariant under reordering of the ordered tuple $(z_0,z_1)$ (and, obviously, also of $(z_2,\ldots,z_{r-1})$). Hence it directly follows that the count of elements in $\IM^{\gamma^+}_{\gamma_0,\gamma_1}(\Gamma)$ and  $\IM^{\gamma^+}_{\gamma_1,\gamma_0}(\Gamma)$ agree and hence also  $$\frac{\del}{\del q_{\gamma_0}}\;\tilde{\star}\; \frac{\del}{\del q_{\gamma_1}}\;=\; \frac{\del}{\del q_{\gamma_1}}\;\tilde{\star}\; \frac{\del}{\del q_{\gamma_0}}$$
on the chain level, up to a sign determined by the $\IZ_2$-grading of the formal variables $q_{\gamma_0}$, $q_{\gamma_1}$. \\

\noindent\emph{Associativity:} In contrast to commutativity, we cannot expect to have associativity for the big pair-of-pants on the chain level, but only after viewing it as a vector field product on the differential graded manifold $(\tilde{\Q},\tilde{X})$. More precisely, we show that the resulting $(1,3)$-tensors $\tilde{\star}_{10,2}$ and $\tilde{\star}_{1,02}$ on $\tilde{\Q}$ defined by 
\begin{eqnarray*} 
\tilde{\star}_{10,2}:\;\frac{\del}{\del q_{\gamma_0}}\otimes\frac{\del}{\del q_{\gamma_1}}\otimes\frac{\del}{\del q_{\gamma_2}}\;\mapsto\;\Big(\frac{\del}{\del q_{\gamma_1}}\;\tilde{\star}\;\frac{\del}{\del q_{\gamma_0}}\Big)\;\tilde{\star}\;\frac{\del}{\del q_{\gamma_2}},\\
\tilde{\star}_{1,02}:\;\frac{\del}{\del q_{\gamma_0}}\otimes\frac{\del}{\del q_{\gamma_1}}\otimes\frac{\del}{\del q_{\gamma_2}} \;\mapsto\;\frac{\del}{\del q_{\gamma_1}}\;\tilde{\star}\;\Big(\frac{\del}{\del q_{\gamma_0}}\;\tilde{\star}\;\frac{\del}{\del q_{\gamma_2}}\Big), \end{eqnarray*} 
do not agree, but only up to some $\LL_{\tilde{X}}$-exact term which only vanishes after passing to the differential graded manifold $(\tilde{\Q},\tilde{X})$. Due to its importance, this is the content of the following lemma, which finishes the proof of the main theorem. \end{proof} 

\begin{lemma} We have $\tilde{\star}_{10,2}-\tilde{\star}_{1,02}\;=\;\LL_{\tilde{X}}\alpha$ with some tensor field $\alpha\in\T^{(1,3)}\tilde{\Q}$. \end{lemma}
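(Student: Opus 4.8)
The plan is to mimic, at the level of moduli spaces, the standard proof that the small pair-of-pants product is associative up to homotopy, now allowing the conformal structure to vary and keeping track of the extra constrained marked point $z^*$ on each level. Concretely, I would introduce a one-parameter family of moduli spaces $\IM^{\gamma^+}_{\gamma_0,\gamma_1,\gamma_2}(\Gamma;\lambda)$, $\lambda\in[0,1]$, interpolating between two ways of attaching the three special negative punctures. For $\lambda$ near $0$ one would force a piece of the domain carrying $\gamma_0,\gamma_1$ (and the two special points used to fix coordinates) to bubble off, reproducing $\tilde\star_{10,2}$; for $\lambda$ near $1$ one forces the piece carrying $\gamma_0,\gamma_2$ to bubble off, reproducing $\tilde\star_{1,02}$. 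Equivalently, and more in the spirit of the paper, I would realize this as a single family of constraints on the position of $z^*$ and on which three of the punctures are used to rigidify the coordinates, parametrized by an interval, exactly as in the Floer-theoretic proof of associativity of the triple product.

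The key steps, in order: (1) Define the tensor field $\alpha\in\T^{(1,3)}\tilde\Q$ by counting the rigid elements of the total family $\bigcup_{\lambda\in[0,1]}\IM^{\gamma^+}_{\gamma_0,\gamma_1,\gamma_2}(\Gamma;\lambda)$, with the appropriate combinatorial weights $\tfrac{1}{(r-3)!}\tfrac{1}{\kappa_{\gamma_0}\kappa_{\gamma_1}\kappa_{\gamma_2}\kappa^\Gamma}\tfrac1k$, assembled into $dq_{\gamma_0}\otimes dq_{\gamma_1}\otimes dq_{\gamma_2}\otimes\del/\del q_{\gamma^+}$. (2) Establish the compactness/gluing statement for this family: the codimension-one boundary of the compactified $1$-dimensional family consists of the two endpoint strata $\lambda=0$ and $\lambda=1$, together with breakings into a curve of the new type glued to a curve counted by $\tilde X$ (breakings where a level carrying one or zero of the special punctures splits off, or where a level with all special punctures splits off the top/bottom). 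This is the moduli-space analogue of Lemma 4.1, extended by one marked point and with the extra interval direction. (3) Translate this boundary identity into algebra via the Lie derivative, exactly as in the proof of Proposition 4.1: the $\lambda=0$ and $\lambda=1$ contributions give $\tilde\star_{10,2}-\tilde\star_{1,02}$, while all the genuine breakings assemble, using the Leibniz rule and $\LL_{\tilde X}q_\gamma=\tilde X(q_\gamma)$, $\LL_{\tilde X}\del/\del q_\gamma=\del\tilde X/\del q_\gamma$, $\LL_{\tilde X}dq_\gamma=-(-1)^{|q_\gamma|}dq_\gamma(\del\tilde X/\del\cdot)$, into exactly $\LL_{\tilde X}\alpha$ evaluated on $\del/\del q_{\gamma_0}\otimes\del/\del q_{\gamma_1}\otimes\del/\del q_{\gamma_2}\otimes dq_{\gamma^+}$. (4) Check signs and the genericity statement (the constrained point $z^*$ avoids the forbidden stratum $\Delta\subset\IM_{r+2}$ in the family, possibly after a further perturbation of the domain-dependent $H$ and $J$, using the submersivity of the forgetful map from the appendix, now to a one-higher-dimensional moduli of domains).

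The main obstacle I expect is step (2): setting up the interpolating family of domain constraints so that (a) it genuinely connects the $\tilde\star_{10,2}$-count to the $\tilde\star_{1,02}$-count, (b) the constrained extra marked point stays away from the punctures throughout the family and through the boundary (so that the count of $\alpha$ is well-defined and the boundary decomposition is clean), and (c) the only boundary contributions besides the two endpoints are the expected $\tilde X$-breakings — in particular that no new degenerations appear from the interval direction itself (e.g. the three rigidifying punctures colliding). Concretely one has to choose, for each value of the interval parameter, which triple of punctures rigidifies coordinates and where $z^*$ sits, in a way compatible under gluing; handling the transition where the selected triple changes is the delicate point, and is where the $\LL_{\tilde X}$-exact error term is forced to appear. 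Once the geometry of this family is pinned down, the algebraic translation is a now-routine repetition of the bookkeeping in the proofs of Proposition 4.1 and Proposition 2.1.
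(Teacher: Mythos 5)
Your overall strategy is sound — a homotopy argument whose moduli space has the two compositions $\tilde{\star}_{10,2}$, $\tilde{\star}_{1,02}$ and the $\LL_{\tilde{X}}$-breakings in its codimension-one boundary — but the route you take is genuinely different from the paper's, and the paper's route neatly sidesteps the very difficulty you flag as your ``main obstacle.'' The paper does not introduce an interval parameter or a one-parameter family of constraints. Instead it defines $\alpha\in\T^{(1,3)}\tilde{\Q}$ by counting a single moduli space $\IM^{\gamma^+}_{\gamma_0,\gamma_1,\gamma_2}(\Gamma)\subset\IM^{\gamma^+}_2(\gamma_0,\gamma_1,\gamma_2,\Gamma)$ with \emph{two} constrained additional marked points $z^*_1$, $z^*_2$, each required to satisfy $h_2(z^*_i)=0$, where the position of $z^*_1$ is fixed by the triple $(\gamma_0,\gamma_1,\gamma^+)$ and the position of $z^*_2$ by the triple $(\gamma_0,\gamma_2,\gamma^+)$. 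The codimension-one boundary of the one-dimensional version of this space then splits into four types of two-level buildings, distinguished by where the two marked points land: $z^*_1$ on top and $z^*_2$ below gives $\IM^{\gamma^+}_{1,0}\times\IM^{\gamma'}_{0,1}$ and produces $\tilde{\star}_{10,2}$; $z^*_2$ on top and $z^*_1$ below produces $\tilde{\star}_{1,02}$; and both on the same level give $\IM^{\gamma^+}_2\times\IM^{\gamma'}$ or $\IM^{\gamma^+}\times\IM^{\gamma'}_2$, which assemble into $\LL_{\tilde{X}}\alpha$ by the same Leibniz-rule bookkeeping as in the proof of Proposition 4.1. Because the two constraints are defined intrinsically in terms of the punctures, no choice of interpolation is needed and the ``endpoints'' (which composition appears) emerge automatically from how the marked points distribute under breaking. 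By contrast, in your interval-family picture you would have to construct by hand a family of constraints connecting the two choices of rigidifying triples, then verify that the endpoints reproduce exactly the compositions, that the additional marked point stays away from the punctures and the boundary of $\IM_{r+1}$ throughout the family, and that no spurious boundary appears from the interval direction; you correctly identify all of this as delicate. Your approach would likely work, and it has the pedagogical advantage of mirroring the classical Floer proof of $A_\infty$-associativity, but the paper's choice of two simultaneous constraints is cleaner and is the natural thing to write down inside the SFT formalism once one has already accepted the single-marked-point definition of $\tilde{\star}$.
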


\begin{proof} The corresponding $(1,3)$-tensor field $\alpha$ is again defined by counting certain submoduli spaces $\IM^{\gamma^+}_{\gamma_0,\gamma_1,\gamma_2}(\Gamma)\subset\IM^{\gamma^+}_2(\gamma_0,\gamma_1,\gamma_2,\Gamma)$ of Floer solutions,
$$\alpha\;=\; \sum \frac{1}{\kappa_{\gamma_0}\kappa_{\gamma_1}\kappa_{\gamma_2}\kappa^{\Gamma}}\frac{1}{k}\#\IM^{\gamma^+}_{\gamma_0,\gamma_1,\gamma_2}(\Gamma,A)\cdot\; q^{\Gamma}t^{c_1(A)} \;dq_{\gamma_0}\otimes dq_{\gamma_1}\otimes dq_{\gamma_2}\otimes \frac{\del}{\del q_{\gamma^+}}.$$ Here $\IM^{\gamma^+}_{\gamma_0,\gamma_1,\gamma_2}(\Gamma)$ denotes the moduli space of $\tilde{J}$-holomorphic curves equipped with \emph{two} constrained additional marked points $z^*_1,z^*_2$, which are required to get mapped to zero under $h_2$. Here the position of the first additional marked point $z_1^*$ is fixed by the punctures corresponding to the orbits $\gamma_0$, $\gamma_1$ and $\gamma^+$, whereas the second additional marked point $z_2^*$ is fixed by the punctures corresponding to the orbits $\gamma_0$, $\gamma_2$ and $\gamma^+$. \\

Now observe that the codimension-one boundary of the moduli space $\IM^{\gamma^+}_2(\gamma_0,\gamma_1,\gamma_2,\Gamma)$ consists of components of the form 
\begin{equation}\IM^{\gamma^+}_{1,0}(\Gamma')\times\IM^{\gamma'}_{0,1}(\Gamma'')\end{equation} and \begin{equation}\IM^{\gamma^+}_{0,1}(\Gamma')\times\IM^{\gamma'}_{1,0}(\Gamma'')\end{equation} as well as of components of the form \begin{equation}\IM^{\gamma^+}_2(\Gamma')\times\IM^{\gamma'}(\Gamma'')\end{equation} and \begin{equation}\IM^{\gamma^+}(\Gamma')\times\IM^{\gamma'}_2(\Gamma'').\end{equation} Note that the difference between boundary components of type $(1)$ and those of type $(2)$ lies in the fact that in $(1)$ the first marked point lies in the upper component (and the second marked point on the lower component), while in $(2)$ the second marked points sits on the upper component (and the first marked point on the lower component). While in $(3)$ it follows that we only need to consider those components where $\Gamma'$ contains $\gamma_0$, or $\gamma_1$ and $\gamma_2$, in $(4)$ we only need to consider those components where $\Gamma''$ contains $\gamma_0$, $\gamma_1$ and $\gamma_2$. On the other hand, in $(1)$ it follows that we only need to consider the case when $\Gamma'$ contains $\gamma_1$ (and hence $\Gamma''$ contains $\gamma_0$ and $\gamma_2$), while in $(2)$ we need that $\Gamma'$ contains $\gamma_2$ (and hence $\Gamma''$ contains $\gamma_0$ and $\gamma_1$). \\

While for boundary components of type $(3)$ and $(4)$ we are again dealing with moduli spaces with two additional marked points, note that in $(1)$ and $(2)$ we are dealing with moduli spaces with one additional marked point. Arguing as in the proof of the proposition \ref{master}, it then follows that by counting the holomorphic curves in codimension-one boundary of each moduli space $\IM^{\gamma^+}_{\gamma_0,\gamma_1,\gamma_2}(\Gamma)$, we obtain the master equation of the statement. Indeed, while the boundary components of type $(1)$ and $(2)$ lead to the appearance of $\tilde{\star}_{10,2}$ and $\tilde{\star}_{1,02}$, the remaining boundary components of the type $(3)$ and $(4)$ show that associativity does not hold on the chain level but only up to the exact term $\LL_{\tilde{X}}\alpha$.  
\end{proof}

\noindent\emph{Invariance}\\

We now turn to the invariance properties of the new objects. For this we show that for different choices of auxiliary data like (domain-dependent) almost complex structures and Hamiltonians, we obtain cohomology F-manifolds which are isomorphic in the natural sense. \\

Let $(\tilde{\Q}^+,\tilde{X}^+,\tilde{\star}^+)$ and $(\tilde{\Q}^-,\tilde{X}^-,\tilde{\star}^-)$ be pairs of differential graded manifolds equipped with vector field products $\tilde{\star}^{\pm}$ in the above sense, obtained using two different choices of cylindrical almost complex structures $\tilde{J}^{\pm}$ on $\RS\times M$ defined using two different choices of (domain-dependent) Hamiltonian functions $H^{\pm}$ and $\omega$-compatible almost complex structures $J^{\pm}$. Recall from section two that, by choosing a smooth family $(H_s,J_s)$ of (domain-dependent) Hamiltonians and $\omega$-compatible almost complex structures interpolating between $(H^+,J^+)$ and $(H^-,J^-)$, we can equip the cylindrical manifold $\RS\times M$  with the structure of an almost complex manifold with cylindrical ends in the sense of \cite{BEHWZ}, where we denote the resulting (non-cylindrical) almost complex structure on $\RS\times M$ by $\hat{J}$. \\

Recall further that, by counting elements in moduli spaces $\widehat{\IM}^{\gamma^+}(\Gamma)=\widehat{\IM}^{\gamma^+}(\Gamma,A)$ of $\hat{J}$-holomorphic curves, we get a morphism of the underlying differential graded manifolds $(\tilde{\Q}^+,\tilde{X}^+)$ and $(\tilde{\Q}^-,\tilde{X}^-)$. It is given by a linear map $\varphi^{(0,0)}:\T^{(0,0)}\tilde{\Q}^+\to\T^{(0,0)}\tilde{\Q}^-$ defined as $$\varphi^{(0,0)}(q_{\gamma^+})\;=\;\sum_{\Gamma,A}\frac{1}{\kappa^{\Gamma}}\cdot\#\widehat{\IM}^{\gamma^+}(\Gamma,A)\cdot q^{\Gamma} t^{c_1(A)}$$ and $\varphi^{(0,0)}(q_{\gamma^+_1}\cdot\ldots\cdot q_{\gamma^+_s}):=\varphi^{(0,0)}(q_{\gamma^+_1})\cdot\ldots\cdot\varphi^{(0,0)}(q_{\gamma^+_s})$. In particular, the map $\varphi^{(0,0)}$ on the space of functions on $\tilde{\Q}$ uniquely defines the corresponding map $\varphi^{(0,1)}:\T^{(0,1)}\tilde{\Q}^+\to\T^{(0,1)}\tilde{\Q}^-$ on the space of forms by the requirement that $d\circ\varphi^{(0,0)}=\varphi^{(0,1)}\circ d$ with the exterior derivative $d:\T^{(0,0)}\tilde{\Q}^{\pm}\to\T^{(0,1)}\tilde{\Q}^{\pm}$. \\

While in \cite{EGH} it is shown $\varphi^{(0,0)}$ indeed defines an isomorphism of the space of functions, $\varphi^{(0,0)}:\T^{(0,0)}\tilde{\Q}^+_{\tilde{X}^+}\stackrel{\cong}{\longrightarrow}\T^{(0,0)}\tilde{\Q}^-_{\tilde{X}^-},$ the map on the spaces of forms on the differential graded manifolds, $\varphi^{(0,1)}: \T^{(0,1)}\tilde{\Q}^+_{\tilde{X}^+}\to\T^{(0,1)}\tilde{\Q}^-_{\tilde{X}^-}$, is also an isomorphism by the same arguments, see also the section on satellites in \cite{EGH}. On the other hand, having shown that the spaces of functions and one-forms on $(\tilde{\Q}^{\pm},\tilde{X}^{\pm})$ are isomorphic, this automatically implies that the same is true for all the other tensor fields $\T^{(r,s)}\tilde{\Q}^{\pm}_{\tilde{X}^{\pm}}$. 

\begin{theorem} Let $M$ be a symplectic manifold with contact-type boundary and assume that all Hamiltonian functions have fixed asymptotic linear slope in the cylindrical end of the completion. Then for different choices of auxiliary data like (domain-dependent) Hamiltonian functions $H^{\pm}$ and $\omega$-compatible almost complex structures $J^{\pm}$, the isomorphism between the resulting differential graded manifolds $(\tilde{\Q}^+,\tilde{X}^+)$ and $(\tilde{\Q}^-,\tilde{X}^-)$ extends to an isomorphism of cohomology F-manifolds. In the case when the symplectic manifold is closed, we recover the cohomology F-manifold structure on $\Lambda^+\QH^*(M)=\bigoplus_{k\in\IN}\QH^*(M)$ given by the big quantum product. \end{theorem}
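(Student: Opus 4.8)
The plan is to combine the boundary analysis behind the proof of Proposition 4.1 with the invariance argument for contact homology of Section 2, now carried out for an interpolating almost complex structure on the cobordism $\RS\times M$. First I would introduce, for the (non-cylindrical) almost complex structure $\hat{J}$ on $\RS\times M$ built from the family $(H_s,J_s)$, the cobordism moduli spaces $\widehat{\IM}^{\gamma^+}_{\gamma_0,\gamma_1}(\Gamma,A)$ of $\hat{J}$-holomorphic curves with one positive and $r$ negative cylindrical ends, equipped with a single additional marked point $z^*$ whose position is fixed by the three special punctures corresponding to $\gamma^+,\gamma_0,\gamma_1$ and which is required to satisfy $h_2(z^*)=0$. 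As in Definition 3.3 and the discussion following it, the well-definedness of this constraint rests on the genericity statement of the appendix — that for generic domain-dependent $(H_s,J_s)$ the image of the universal moduli space under the forgetful map to $\IM_{r+1}$ avoids the stratum $\Delta$ where $z^*$ collides with a puncture — now established in the cobordism setting; compactness of the resulting zero- and one-dimensional moduli spaces follows, as everywhere in the paper, by combining the $C^0$-bounds of (\cite{R}, subsection 3.2) for the $s$-monotone interpolation with the SFT compactness of \cite{BEHWZ} and classical facts about branched covers of the sphere. Counting these curves defines a $(1,2)$-tensor $\hat{\star}$, a cobordism version of the big pair-of-pants tensor of Definition 3.4.

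Next I would analyse the codimension-one boundary of the one-dimensional moduli spaces $\widehat{\IM}^{\gamma^+}_{\gamma_0,\gamma_1}(\Gamma,A)$. As in Lemma 4.2 the constrained marked point always lies on the component carrying at least two of the three special punctures, while SFT compactness in a cobordism produces two-level buildings whose top level is either a $\tilde{J}^+$-symplectization curve (with an $\hat{J}$-cobordism curve below) or an $\hat{J}$-cobordism curve (with a $\tilde{J}^-$-symplectization curve below). The four resulting families of boundary components translate — by the same Leibniz-rule expansion and the same bookkeeping of the combinatorial factors $1/k$, $1/\kappa^{\Gamma}$, the $\IZ_k$-symmetry and the coherent orientations of \cite{EGH},\cite{BM} as in the proof of Proposition 4.1 — into a master equation of the form
\[
\varphi^{(1,2)}\tilde{\star}^+ \;-\; \tilde{\star}^- \;=\; \LL_{\tilde{X}^-}\,\hat{\star}',
\]
where $\varphi^{(1,2)}$ is the map on $(1,2)$-tensor fields induced by the chain map $\varphi^{(0,0)}$ of Section 2 and $\hat{\star}'$ is a $(1,2)$-tensor on $\tilde{\Q}^-$ assembled from $\hat{\star}$ and $\varphi^{(0,0)}$. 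Hence $\varphi^{(1,2)}$ carries the class of $\tilde{\star}^+$ in $\T^{(1,2)}\tilde{\Q}^+_{\tilde{X}^+}$ to the class of $\tilde{\star}^-$ in $\T^{(1,2)}\tilde{\Q}^-_{\tilde{X}^-}$. Since, as recalled just before the statement, $\varphi$ already induces isomorphisms $\T^{(r,s)}\tilde{\Q}^+_{\tilde{X}^+}\stackrel{\cong}{\longrightarrow}\T^{(r,s)}\tilde{\Q}^-_{\tilde{X}^-}$ of all tensor homologies, in particular of the cohomology tangent sheaves together with their vector-field products, this identity says precisely that $\varphi$ intertwines $\tilde{\star}^+$ with $\tilde{\star}^-$, i.e. that it is an isomorphism of cohomology F-manifolds.

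For the last assertion I would specialize to $H=0$, so that $\phi=\id$, the mapping torus is $S^1\times M$ with Reeb flow $\del_t$, the $k$-periodic orbits form one Morse--Bott copy of $M$, and after the small Morse perturbation of \cite{BO} one obtains $\HC^*_{\cyl}(M_{\id})\cong\bigoplus_{k\in\IN}\HF^*(\id^k)\cong\bigoplus_{k\in\IN}\QH^*(M)=\Lambda^+\QH^*(M)$. With $H=0$ the Floer equation $\CR_{J,H,h}(u)=0$ decouples: $h$ is a holomorphic branched cover of the cylinder recording only the wrapping numbers and uniquely determined up to the already-quotiented $\IC^*$, while $u:S^2\to M$ is a genuine $J$-holomorphic sphere, and the condition $h_2(z^*)=0$ merely pins the residual $S^1$ and imposes nothing on $u$. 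Hence, once the $1/k$, $\IZ_k$- and $\kappa$-weights are accounted for, $\#\IM^{\gamma^+}_{\gamma_0,\gamma_1}(\Gamma,A)$ reduces to the Gromov--Witten count $\int_{\IM_{r+1}(A)}\ev_0^*\theta_{\gamma_0}\wedge\ev_1^*\theta_{\gamma_1}\wedge\bigwedge_{i\geq 2}\ev_i^*\theta_{\gamma_i}\wedge\ev_{\infty}^*\theta_{\gamma^+}$, the factor $1/(r-2)!$ in Definition 3.4 is exactly Dubrovin's symmetrization factor, the SFT duality between $q_{\gamma^+}$ and $\del/\del q_{\gamma^+}$ becomes the Poincaré pairing, and the cohomological vector field $\tilde{X}$ becomes trivial on homology (the mapping-torus incarnation of the proposition of the introduction asserting $X=0$ for closed $M$). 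Therefore $(\tilde{\Q},\tilde{X},\tilde{\star})$ degenerates at $H=0$ to the graded linear space $\Lambda^+\QH^*(M)$ carrying Dubrovin's big quantum product.

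The step I expect to be the main obstacle is the analytic input of the first paragraph: establishing transversality and compactness for the cobordism moduli spaces $\widehat{\IM}^{\gamma^+}_{\gamma_0,\gamma_1}(\Gamma,A)$ with the constrained marked point — that generic domain-dependent $(H_s,J_s)$ force the forgetful map to $\IM_{r+1}$ off the stratum $\Delta$ while the $s$-monotone $C^0$-bounds keep the curves in a compact part of the open mapping torus so that \cite{BEHWZ} applies. Everything downstream is the same Leibniz-and-orientation bookkeeping as in Section 4, together with the routine Morse--Bott reduction to Gromov--Witten theory at $H=0$.
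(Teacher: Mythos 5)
Your argument follows the same route as the paper: introduce the cobordism moduli spaces $\widehat{\IM}^{\gamma^+}_{\gamma_0,\gamma_1}(\Gamma)$ with the constrained additional marked point satisfying $h_2(z^*)=0$, analyze the codimension-one boundary (the doubling to four boundary types by distinguishing which level is the cobordism one), read off the resulting chain-homotopy identity relating $\tilde\star^{\pm}$ via the chain map $\varphi$, and then specialize to $H=0$, where the branched cover $h$ decouples and the residual $S^1$-symmetry kills $\tilde X$ while the fixed-marker moduli spaces reduce to the Gromov-Witten ones. This matches the paper's proof in structure and content, with your explicit homotopy equation $\varphi^{(1,2)}\tilde\star^+-\tilde\star^-=\LL_{\tilde X^-}\hat\star'$ simply spelling out what the paper summarizes as commutation "up to $\LL_{\tilde X^{\pm}}$-exact terms" by analogy with the satellite construction of \cite{EGH}.
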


\begin{proof} We prove that the isomorphism of tensor fields respects the product structure by showing that $$\varphi^{(0,2)}\circ\tilde{\star}=\tilde{\star}\circ\varphi^{(0,1)}$$ where the isomorphism $\varphi^{(0,2)}:\T^{0,2)}\tilde{\Q}^+_{\tilde{X}^+}\to\T^{(0,2)}\tilde{\Q}^-_{\tilde{X}^-}$ is on the chain level given by $\varphi^{(0,2)}(q^{\Gamma_+}\cdot dq_{\gamma^+_1}\otimes dq_{\gamma^+_2}) = \varphi^{(0,0)}(q^{\Gamma_+})\cdot \varphi^{(0,1)}(dq_{\gamma^+_1})\otimes\varphi^{(0,1)}(dq_{\gamma^+_2})$. Denote by $\widehat{\IM}^{\gamma^+}_1(\gamma_0,\gamma_1,\Gamma)$ the moduli space of $\hat{J}$-holomorphic curves $(\tilde{u},z_0,z_1,z_2,\ldots,z_{r-1},z^*)$ in the topologically trivial symplectic cobordism $\RS\times M$ interpolating between the two choices of auxiliary data $(H^{\pm},J^{\pm})$ and with an additional unconstrained marked point. \\

As in the cylindrical case, there still exists an evaluation map to the circle, $\ev: \widehat{\IM}^{\gamma^+}_1(\gamma_0,\gamma_1,\Gamma)\to S^1$ given by mapping $(\tilde{u},0,1,z_2,\ldots,z_{r-1},z_{\infty},z^*)$ to $h_2(z^*)$ with $\tilde{u}=(h_1,h_2,u):\Si\to M$. Summarizing we can again define new moduli spaces $\widehat{\IM}^{\gamma^+}_{\gamma_0,\gamma_1}(\Gamma)\subset\widehat{\IM}^{\gamma^+}_1(\gamma_0,\gamma_1,\Gamma)$ by fixing the position of the additional marked point $z^*$ using the three special punctures and requiring that it gets mapped to zero. \\

For the proof we now have to consider the codimension one-boundary of the submoduli space $\widehat{\IM}^{\gamma^+}_{\gamma_0,\gamma_1}(\Gamma)$. Instead of two types of two-level curves we now have four types of two-level curves, where the factor two just results from the fact that we have to distinguish which level is cylindrical and which is non-cylindrical. As in the corresponding result for satellites in \cite{EGH}, we obtain that the morphism and the product commute up to terms which are exact for the Lie derivatives with respect to $\tilde{X}^{\pm}$. \\

Finally, note that in the case when $H=0$, the new moduli spaces used for the definition of the big pair-of-pants product count Floer solutions $u:\Si\to M$ satisfying the Cauchy-Riemann equation $\CR_J(u)=0$ with varying conformal structure and fixed asymptotic markers. Since each such solution indeed extends (by the removable singularity theorem) to a map from the closed sphere to $M$ and we can hence also the asymptotic markers are not needed anymore, we precisely end up with the moduli spaces of $J$-holomorphic spheres in $M$ defining the big quantum product. On the other hand, when $H=0$, the cohomological vector field $\tilde{X}$ is vanishing. For this observe that, in contrast to the moduli spaces for the big pair-of-pants product, we now count Floer solutions with (simultaneously) rotating asymptotic markers. After setting $H=0$ it follows that we still arrive at moduli spaces of $J$-holomorphic spheres but with a free $S^1$-symmetry given by the unconstrained rotating asymptotic markers. \end{proof}

In particular, up to isomorphism the cohomology F-manifold structure is contained in the Frobenius manifold structure on quantum homology given by all rational Gromov-Witten invariants and hence has already been computed in many cases, see \cite{DZ}. \\

Furthermore, extending what we already remarked in the section on differential graded manifolds, by abstract homotopy transfer we claim that there exists a cohomological vector field $X\in\T^{(1,0)}\Q$ but also a vector field product $\tilde{\star}'\in\T^{(1,2)}\Q$ on $\Q:=\HC_*^{\cyl}(M_{\phi})$ such that $(\tilde{\Q},\tilde{X},\tilde{\star})$ and $(\Q,X,\star)$ define the same cohomology F-manifold, up to homotopy. 

\section{$L_{\infty}$-structure and Reeb dynamics}

In this section we show how the newly introduced $L_{\infty}$-structure in Hamiltonian Floer theory can be used to prove the existence of (multiple) closed Reeb orbits. In order to link our new invariant to Reeb dynamics, we consider Hamiltonians which are special in the sense that they are time-independent, $C^2$-small in the interior and depending only on the $\IR$-coordinate in the cylindrical end. \\

In this case it is well-known, see e.g. \cite{A}, \cite{R}, that the one-periodic orbits of $H$ and its multiples $kH$, $k\in\IN$, correspond to critical points of $H$ in the interior or project onto closed orbits of the Reeb vector field on the contact-type boundary in case they sit in the cylindrical end. It follows that the chain space of $\HC^{\cyl}_*(M_{\phi})=\bigoplus_k\HF_*(\phi^k)$ is generated by critical points and closed Reeb orbits. More precisely, while a critical point gives a generator for $\HF_*(\phi^k)$ (on the chain level) for each $k\in\IN$, for the closed Reeb orbits the situation is slightly more complicated: First, depending on the period of the closed Reeb orbit, a closed Reeb orbit appears as a generator of $\HF_*(\phi^k)$ only if $k$ is large enough. Note that the latter is precisely the reason why the symplectic homology $\SH_*(M)$ of $M$ is defined as a direct limit of Floer homology groups, $\SH_*(M)=\lim_{k\to\infty}\HF_*(\phi^k)$, see \cite{R}. But even more important, since in the definition of Hamiltonian Floer theory we need to consider parametrized orbits, we emphasize that the choice of a time-independent Hamiltonian indeed leads to a Morse-Bott case. As described in \cite{BO}, it follows that each closed Reeb orbit indeed gives two generators for $\HF_*(\phi^k)$ for $k$ large enough, which can be obtained by viewing the closed Reeb orbitas an orbit with a fixed parametrization or as an $S^1$-family of parametrized orbits. Note that using the relation between parametrized Hamiltonian orbits in $M$ and closed orbits in the mapping torus $M_{\phi}$ discussed above, this amounts to having a fixed closed orbit or an $S^1$-family of orbits in $M_{\phi}$.\\

First we show that the results from \cite{F1} immediately lead to the proof of the following 

\begin{theorem}
If the $L_{\infty}$-structure on $\HC^{\cyl}_*(M_{\phi})=\bigoplus_k\HF_*(\phi^k)$ is not trivial, then there is at least one closed Reeb orbit on the contact-type boundary of $M$. In particular, the $L_{\infty}$-structure is trivial in the case of closed symplectic manifolds.
\end{theorem}

\begin{proof} 
We show that this can be deduced from the results in \cite{F1}. Assume that there is no closed Reeb orbit on the contact-type boundary. Then it follows that all one-periodic Hamiltonian orbits correspond to critical points. Furthermore, we can directly assume, by the maximum principle, that the holomorphic curves stay in the part of $M$ where $H$ is still $C^2$-small. In the same way as for closed symplectic manifolds it can then be shown that, after passing to homology, the corresponding cohomological vector field $\tilde{X}$ and hence the $L_{\infty}$-structure is indeed trivial. This follows from the fact that all moduli spaces of holomorphic curves with three or more punctures then come with an $S^1$-symmetry as proven in \cite{F1}. Note that in this case all periodic orbits correspond to critical points, so that the simultaneously rotating asymptotic markers are unconstrained. \\

We emphasize that the domain-dependent Hamiltonian perturbations defined in \cite{F1} arise as special case of the domain-dependent Hamiltonian perturbations defined in the appendix when each map $H(\vec{k}):\IM_1(\vec{k})\to C^{\infty}(M)$ is given by $H^{(r+1)}:\IM_{r+2}\to C^{\infty}(M)$ in the sense that it factors through the map $\ft: \IM_1(\vec{k})\to\IM_{r+2}$ forgetting the asymptotic markers and the multiplicities, $$H(\vec{k})=H^{(r+1)}\circ\ft:\, \IM_1(\vec{k})\to \IM_{r+2}\to C^{\infty}(M). $$
\end{proof}

As mentioned above, for each $k\in\IN$ the chain space for $\HF^*(\phi^k)$ is generated by critical points as well as closed Reeb orbits up to a maximal period. In order to be able to drop the request of the maximal period one often prefers to work with the symplectic homology $\SH_*(M)$ of $M$ which is defined as a direct limit of Floer homology groups, $\SH_*(M)=\lim_{k\to\infty}\HF_*(\phi^k)$, see \cite{R} for details on the construction of the direct limit. As an immediate consequence we find

\begin{corollary}
If the Lie bracket on the symplectic homology of $M$ does not vanish, then there is at least one closed Reeb orbit on the contact-type boundary of $M$.
\end{corollary}

Indeed it suffices to observe that the nontriviality of the Lie bracket on $\SH_*(M)$ immediately implies that, for $k,\ell$ large enough, the restricted Lie bracket $m_2^{k,\ell}:\HF_*(\phi^k)\times\HF_*(\phi^{\ell})\to\HF_*(\phi^{k+\ell})$ has to be nontrivial.  \\
 
Going beyond, we now show how the results of \cite{F2} can be used to prove the existence of multiple simple Reeb orbits. For this we first recall, see \cite{A}, \cite{R}, that there is a Batalin-Vilkovisky (BV) operator $\Delta:\SH_*(M)\to\SH_{*-1}(M)$ on symplectic homology, defined by counting Floer cylinders with unconstrained asymptotic markers. Its key property for our purposes lies in the fact that it maps parametrized Reeb orbits to their unparametrized versions and maps generators corresponding to critical points and unparametrized Reeb orbits to zero. In order to see that the definition in \cite{R} and \cite{A} immediately leads to the definition of a BV operator on $\HC^{\cyl}_*(M_{\phi})=\bigoplus_k\HF_*(\phi^k)$, it suffices to observe that the BV operator on $\SH_*(M)$ is explicitly defined through maps from $\HF_*(\phi^k)$ to $\HF_{*-1}(\phi^k)$. 

\begin{center}
\includegraphics[height=4cm]{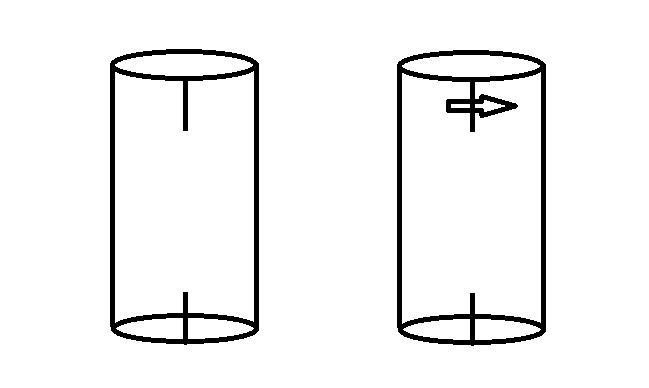}\\
\small{Boundary operator $\del$ (left) and BV operator $\Delta$ (right) in comparison}
\end{center}

\begin{theorem}
Assume that $M$ is a Liouville manifold. If the $L_{\infty}$-structure is not trivial on $\Ker\Delta\subset\HC^{\cyl}_*(M_{\phi})=\bigoplus_k\HF_*(\phi^k)$, then there either exist two simple closed Reeb orbits or one homologically trivial Reeb orbit on the contact-type boundary of $M$.
\end{theorem}

\begin{proof}
Here we show that this follows from \cite{F1} combined with the results from \cite{F2} using a proof by contradiction. Indeed let us assume from now on to the contrary that there exists only one simple closed Reeb orbit on the contact-type boundary which represents a non-trivial class in $H_1(M)$ (with $\IR$-coefficients). Assuming that we have chosen a Hamiltonian $H:M\to\IR$ of the special form above, that is, which is  which is still time-independent, $C^2$-small in the interior and depending only on the $\IR$-coordinate in the cylindrical end, note that we are in the Morse-Bott case now. By studying all the moduli spaces that a priori could contribute to the $L_{\infty}$-structure, we show that the $L_{\infty}$-structure still has to vanish on $\Ker\Delta\subset\bigoplus_k\HF_*(\phi^k)$. \\

First, it follows from the theorem 3.1 above that, if a moduli space $\IM^{\gamma}(\Gamma)$ gives a non-zero contribution, then at least one of the orbits in $\{\gamma\}\cup\Gamma$ must project to a $k$-fold covering of the simple closed Reeb orbit on the contact-type boundary of $M$. Moreover, if the orbits $\gamma_1,\ldots,\gamma_s$ in $\Gamma$ project onto the $k_0$-,...,$k_{r-1}$-fold iterates of the closed Reeb orbit, then it follows from homological reasons that indeed $k=k_0+\ldots+k_{r-1}$. Note that, in order to explicitly include critical points in the interior of $M$, here we make the convention that $k_i=0$ if $\gamma_i$ corresponds to a critical point. \\

Let us assume that $M$ is a Liouville manifold, in particular, $\omega$ is exact, and that $H=0$ in the interior of $M$. Using the relation between action of orbits and the $\omega$-energy of holomorphic curves in $\IR\times M_{\phi}$ (in the sense of \cite{BEHWZ}, 5.3) from \cite{F1}, we get that, in the Morse-Bott limit where $H$ vanishes in the interior of $M$, all holomorphic curves in $\IM^{\gamma}(\Gamma)$ have trivial $\omega$-energy. As in \cite{F2} we can deduce from (\cite{BEHWZ}, lemma 5.4) that they have image entirely contained in the image of a single orbit. In other words, $\IM^{\gamma}(\Gamma)$ entirely consists of holomorphic curves which are branched covers of the trivial cylinder over one of the periodic orbits in $M_{\phi}$ in the $S^1$-family of parametrized orbits given by the closed Reeb orbit. \\

Since we are in the Morse-Bott case now, note that, by slight abuse of notation, here $\gamma$ and the orbits $\gamma_1,\ldots,\gamma_s$ in $\Gamma$ are possibly viewed as $S^1$-family of closed orbits in $M_{\phi}$; in the notation of \cite{BO} this means that $\gamma$ can stand for $\overline{\gamma}$ or $\underline{\gamma}$ (unless it corresponds to a critical point). In our simple situation with just one simple Reeb orbit, it directly follows from the definition of the BV-operator $\Delta$ in \cite{A} that the kernel of $\Delta$ is spanned, apart from the generators corresponding to critical points, by the generators corresponding to the $S^1$-families of orbits in $M_{\phi}$. \\

It follows that, after restricting to $\Ker\Delta\subset\HC^{\cyl}_*(M_{\phi})$, the $L_{\infty}$-structure only gets contributions from orbit curves in the sense of \cite{F2}. Indeed, by viewing everything in terms of Floer curves in $M$, recall from proposition 2.7 that the coefficients of the $L_{\infty}$-structure count Floer curves with simultaneously rotating asymptotic markers. When restricted to $\Ker\Delta$ it follows that the single orbit in the positive end must correspond to a fixed orbit, since else the rotating asymptotic markers could again not be fixed which again would lead to an $S^1$-symmetry on the moduli space. On the other hand, in the latter case we can indeed forget about the rotating asymptotic markers, that is, we just need to count branched covers of orbit cylinders as in \cite{F2}. \\

Note that at the moment we assume that the Hamiltonian $H$ is zero in the interior of $M$; in particular, if $k_i=0$, then the corresponding puncture is an unconstrained additional marked point in the sense of \cite{EGH}. Instead of perturbing $H$ to be a $C^2$-small Morse function, note that we can use the additional marked points to define evaluation maps which in turn can be used to pullback de Rham homology classes on $M$ in order to integrate them over the moduli space. As shown in \cite{F2}, by dimension reasons the only possible contributions from orbit curves with one or more additional marked points come from orbit cylinders with one additional marked, where the corresponding coefficient is given by pairing of the homology class of degree one with the homology class in $H_1(M)$ represented by the closed Reeb orbit. Since the critical points are generators for the relative cohomology classes in $H^*(M,\del M)$ and the orbit indeed represents a class in $H_1(\del M)$, it follows that this pairing indeed gives zero. \\

It follows that, for $H$ sufficiently $C^2$-small in the interior of $M$, the only possible contributions to the $L_{\infty}$-structure restricted to $\Ker\Delta$ come from orbit curves without additional marked points, i.e., where all orbits in $\{\gamma\}\cup\Gamma$ indeed correspond to iterates of the closed Reeb orbit on the contact-type boundary of $M$. While for orbit cylinders the Fredholm index is always zero, there indeed exist examples of moduli spaces of branched covers which by index reasons could possibly contribute to the $L_{\infty}$-structure. On the other hand, we have shown in \cite{F2} that, after perturbing the Cauchy-Riemann operator in order to achieve regularity, the count of elements in the perturbed moduli spaces indeed gives zero - in contradiction to our assumption that the $L_{\infty}$-structure is nontrivial on $\Ker\Delta$. \\

In order to recall the main ideas from \cite{F2}, note that the moduli space is a complex manifold of complex dimension greater or equal to one so that, when the Fredholm index is assumed to be one, the actual dimension of the moduli space must be strictly larger than its virtual dimension expected by the Fredholm index. Note that this in turn implies that the moduli space cannot be transversally cut out by the Cauchy-Riemann operator, even for generic choices of $J$. In \cite{F2} it is shown that for orbit curves the cokernels of the linearized Cauchy-Riemann operator fit together to a smooth obstruction vector bundle $\Coker\CR_J$. By induction on the dimension of the unperturbed moduli space of branched covers it is shown in \cite{F2} that for every pair of coherent and transversal sections in $\Coker\CR_J$ the algebraic count of zeroes agree and that the resulting Euler number vanishes, $\chi(\Coker\CR_J) = 0.$ \\

In order to finish the proof of the theorem by contradiction, it just remains to comment on the perturbation schemes used in order to prove regularity for the appearing moduli space. \\

While for the general definition of the $L_{\infty}$-structure (as well as for the moduli spaces with orbits corresponding to critical points) we have used domain-dependent Hamiltonian perturbations, we have used (as in \cite{F2}) obstruction bundle sections in order to get regularity for the moduli spaces of orbit curves; in particular, we assume that the Hamiltonian is unperturbed in the cylindrical end of $M$. By choosing domain-dependent Hamiltonian perturbations which coherently connect the two given special choices in the sense of the corresponding definitions in \cite{F1}, \cite{F2} and assuming that for the cobordism the obstruction bundle section is again extended to a neighborhood of the orbit similar as in the proof of the proposition 3.1 of \cite{F2}, it follows from standard arguments that the resulting invariants are still the same. In particular, as in proposition 4.1 in \cite{F2} it follows that we can still use that the orbit curves do not contribute. Note that for the necessary monotonicity assumption we can possibly replace the appearing Hamiltonians by a suitable multiple. Finally note that in the case when we claimed to get a $S^1$-symmetry on the moduli space of orbit curves, the underlying moduli space of branched covers gets an extra $S^1$-factor (corresponding to the fact that the rotating asymptotic markers are not fixed). The  extra $S^1$-action now directly lifts an $S^1$-action on the obstruction bundle, so choosing an $S^1$-equivariant section in it gives a regular moduli space which still carries this $S^1$-symmetry; in particular, the Morse-Bott situation does not cause additional regularity problems.       
\end{proof}
 
\begin{remark} 
The proof of our theorem indeed shows that there must exist closed Reeb orbits on the contact-type boundary of $M$ such that their first homology classes are linearly dependent in $H_1(M)$. 
\end{remark}

As above there also exists an immediate corollary which just makes use of the Lie bracket and the BV operator on symplectic homology.
\begin{corollary}
If there exist $\alpha,\beta\in\SH_*(M)$ with $\Delta(\alpha)=0=\Delta(\beta)$ and $[\alpha,\beta]\neq 0$, then there either exist two simple closed Reeb orbits or one homologically trivial Reeb orbit on the contact-type boundary of $M$.
\end{corollary}

\section{Relation with mirror symmetry}

Assume that $M$ and $M^{\vee}$ are open Calabi-Yau manifolds which are mirror to each other in the sense of homological mirror symmetry.  While in the case of closed Calabi-Yau manifolds $M$ and $M^{\vee}$ the corresponding closed-string version relates the Gromov-Witten theory of $M$ with the (extended) deformation theory of complex structures on $M^{\vee}$, passing from closed to open manifolds, Gromov-Witten theory has to be replaced by symplectic homology, see \cite{Pas}. More precisely, it is conjectured that there exists a linear isomorphism between the symplectic homology $\SH_*(M)$ and the Dolbeault cohomology $\H^*(M^{\vee},\bigwedge^* T_{M^{\vee}})$ of polyvector fields which intertwines the natural BV operators and Lie brackets on both sides.\\

Before we can explain below how mirror symmetry combined with our results from the last section can be used to predict the existence of (multiple) closed Reeb orbits, we first quickly recall the basics of deformation theory of complex manifolds, where we refer to \cite{H} for a detailed exposition. \\

Note that a choice of an almost complex structure $J$ on $M^{\vee}$ defines a unique splitting of the complexified tangent bundle into $(1,0)$- and $(0,1)$-part, $$TM^{\vee}\otimes\IC=T^{1,0}\oplus T^{0,1}.$$ On the other hand, a complex structure $J$ can be characterized by the fact that the $(0,1)$-part of the complexified tangent bundle is closed under the Lie bracket, $[T^{0,1},T^{0,1}]\subset T^{0,1}$. While it easily follows that, near the fixed complex structure $J^{\vee}$ on $M^{\vee}$, the space of almost complex structures can be identified with the space $\IA^{(0,1)}(T_{M^{\vee}})$ of $(0,1)$-forms with values in the holomorphic tangent bundle $T_{M^{\vee}}=T^{1,0}$ of $M^{\vee}=(M^{\vee},J^{\vee})$, the subset of complex structures agrees locally with the solution set of the Maurer-Cartan equation $$\CR q \;+\;\frac{1}{2}[q,q]\,=\,0,\,\,q\in\IA^{(0,1)}(T_{M^{\vee}}).$$ Here $\CR:\IA^{(0,q)}(T_{M^{\vee}})\to \IA^{(0,q+1)}(T_{M^{\vee}})$ is the Dolbeault operator and $[\cdot,\cdot]: \IA^{(0,p)}(T_{M^{\vee}})\otimes\IA^{(0,q)}(T_{M^{\vee}})\to \IA^{(0,p+q)}(T_{M^{\vee}})$ is the natural Lie bracket on polyvector-valued forms, see \cite{BK}. \\

In order to arrive at the algebraic invariants relevant for mirror symmetry, we slightly need to generalize the above setup and study the extended deformation theory of complex structures, which is now locally modelled by the dg Lie algebra of polyvector-valued forms $(\IA^{(0,*)}(\bigwedge^* T_{M^{\vee}}),\CR,[\cdot,\cdot])$. While the Dolbeault cohomology of polyvector fields is now defined as the homology of $\CR$, $$\H^*(M^{\vee},\bigwedge^* T_{M^{\vee}})\,:=\,H_*(\IA^{(0,*)}(\bigwedge^* T_{M^{\vee}}),\CR),$$ the Lie bracket on $\IA^{(0,*)}(\bigwedge^* T_{M^{\vee}})$ immediately descends to a Lie bracket $[\cdot,\cdot]$ on $H_*(M^{\vee},\bigwedge^* T_{M^{\vee}})$. Finally, in order to introduce a natural BV-operator on $H_*(M^{\vee},\bigwedge^* T_{M^{\vee}})$, observe that the classical $\del$-operator on the space $\IA^{(p,q)}(M^{\vee})$ of $(p,q)$-forms on $M^{\vee}$ defines an operator $\Delta$ on $\IA^{(0,q)}(\bigwedge^p T_{M^{\vee}})$ using the isomorphism $\IA^{(0,q)}(\bigwedge^p T_{M^{\vee}})\cong\IA^{(n-p,q)}(M^{\vee})$ given by a holomorphic volume form $\Omega\in\IA^{(n,0)}(M^{\vee})$. Note that this is the point where we explicitly use that $M^{\vee}$ is assumed to be Calabi-Yau. Then the compatibility of the $\del$-operator with the $\CR$-operator implies that $\Delta$ defines a BV-operator on $\H^*(M^{\vee},\bigwedge^*T_{M^{\vee}})$. \\

As mentioned above, if $M$ and $M^{\vee}$ are mirror to each other, then $\SH_*(M)$ is supposed to be isomorphic to  $\H^*(M^{\vee},\bigwedge^*T_{M^{\vee}})$ and the isomorphism is supposed to be compatible with the Lie brackets and BV-operators on both sides. This said, we get the following corollary to our results from the last section.

\begin{corollary}
If the Lie bracket on $\H^*(M^{\vee},\bigwedge^* T_{M^{\vee}})$ does not vanish, then there is at least one closed Reeb orbit on the contact-type boundary of $M$. If there exist $\alpha,\beta\in\H^*(M^{\vee},\bigwedge^* T_{M^{\vee}})$ with $\Delta(\alpha)=0=\Delta(\beta)$ and $[\alpha,\beta]\neq 0$, then there either exist two simple closed Reeb orbits or one homologically trivial Reeb orbit on the contact-type boundary of $M$.
\end{corollary}

\section{Applications to quantum cohomology of the quintic}

In this section we show how our newly defined big pair-of-pants product and corollary 3.3 can be used to give a precise formulation of a conjecture due to Seidel. In the first section in \cite{Se} he sketches a way for computing the small quantum product of a complex projective variety by counting Floer curves with arbitary many punctures in a suitable divisor complement. More precisely, let $\overline{M}=(\overline{M},\omega)$ be a smooth complex projective variety with trivial canonical bundle and $D\subset\overline{M}$ a smooth hyperplane section in it whose homology class is Poincare dual to $\omega$. Then the complement $M:=\overline{M}\backslash D$ is an exact symplectic manifold with cylindrical end $\IR^+\times V$ over the unit normal bundle $S^1\to V\to D$ of $D$ in $\overline{M}$ with trivial first Chern class. \\

Note that the small quantum cohomology $\QH^*(\overline{M})$ is given as the tensor product of singular cohomology $\H^*(\overline{M})$ with the Novikov ring $\Lambda^0$ of power series in a formal variable $t$ of degree zero. Let $\star^k$ denote the part of the small quantum product defined by counting holomorphic spheres which intersect the divisor $D$ $k$-times. Defining the small quantum product on $\QH^*(\overline{M})$ as the formal sum $\star =\sum_k \star^k t^k$, note that it can also be viewed as a family of deformed products $\star_t:\H^*(\overline{M})\otimes\H^*(\overline{M})\to\H^*(\overline{M})$. Restricting further to the case where $\overline{M}$ denotes the hypersurface of degree $5$ in $\mathbb{C}\mathbb{P}^4$, it is known that the small quantum product is only nontrivial in degree two, that is, it restricts to a family of products $\star_t: \H^2(\overline{M})\otimes\H^2(\overline{M})\to\H^4(\overline{M})$, where it actually counts isolated rational curves (and their multiple covers) for a generic choice of compatible almost complex structure. Note that $\H^2(\overline{M})$ is generated by $\omega$ and that $\H^4(\overline{M})$ is actually isomorphic to $\H^2(\overline{M})$ via Poincare duality by mapping $\omega\wedge\omega$ to $\omega$. \\

On the other hand, as outlined in \cite{Se}, it follows from the hard Lefschetz theorem that the symplectic cohomology group $\SH^2(M)$ of the exact symplectic manifold $M$ is also one-dimensional. Let $H:M\to\IR$ be an asymptotically linear Hamiltonian, where we now explicitly assume that $H$ depends only on the $\IR$-coordinate in the cylindrical end and is equal to zero away from the cylindrical end; in particular, $H$ is of Morse-Bott type. Assuming that the linear slope is sufficiently large, it follows that $\HF^0(\phi^k)$ is isomorphic to $\SH^2(M)$ for all $k\in\IN$, where the canonical generator of $\HF^0(\phi^k)$, again denoted by $\omega$, can be represented by the Morse-Bott family of closed Reeb orbits of period one given by all fibres of the $S^1$-bundle $S^1\to V\to D$. For the difference between the two gradings, note that in this paper we use the new degree convention motivated by \cite{EGH}. \\

After using the formal variable $t$ in order to write a general element in $\HF^0(\phi)$ as $q=t\omega$, we claim that it follows from an obvious $S^1$-symmetry of the relevant moduli spaces that we indeed have $X(q)=0$. Since obviously also $E(q)=0$ for the Euler vector field, it follows from corollary 3.3 that we obtain a family of deformed pair-of-pants products $$\star_q: \bigoplus_k\HF^0(\phi^k)\otimes\bigoplus_k\HF^0(\phi^k)\to\bigoplus_k\HF^2(\phi^k).$$ Note that we actually claim (without proof) that the deformed products still live on the same Floer cohomology groups. On the other hand, note the family of deformed products can again be expanded as $\star_q=\sum_k \star^k q^k$, where $\star^k:\HF^0(\phi^{k_0})\otimes\HF^0(\phi^{k_1})\to\HF^2(\phi^{k_0+k_1+k})$ is defined by counting Floer curves with $k+3$ cylindrical ends.\\

With this we are ready for our attempt to formalize the conjecture sketched in \cite{Se}. Let $\omega\wedge\omega$ denote the generator of $\HF^2(\phi^{k_0+k_1+k})$ represented by the Morse-Bott family of closed Reeb orbits of period one given by all fibres of the $S^1$-bundle $S^1\to V\to Z$ where the subdivisor $Z\subset D$ is chosen such that it is Poincare dual to $\omega\wedge\omega\in\H^4(\overline{M})$. 

\begin{conjecture}
The coefficient of the deformed pair-of-pants product $\omega\star^k\omega$ of the two canonical generators of $\HF^0(\phi^{k_0})$ and $\HF^0(\phi^{k_1})$ in front of the generator $\omega\wedge\omega\in\HF^2(\phi^{k_0+k_1+k})$ agrees with the count holomorphic spheres of degree $k$ in the quintic hypersurface $\overline{M}$.
\end{conjecture}

In \cite{Se} it is outlined how the Fukaya category of $\overline{M}$ can be computed using the (wrapped) Fukaya category of the divisor complement $M$. In the same way as Seidel considers this as a promising approach to understand the Fukaya category of complex projective variety and hence also of open-string mirror symmetry, we claim that the above conjecture provides a novel approach to understand its closed-string version; its proof is work in progress. 

\section*{Appendix: Transversality using domain-dependent Hamiltonians}

While the required transversality for all appearing moduli spaces can be established using the polyfold theory of Hofer-Wysocki-Zehnder and has indeed already been established using an entirely different approach by J. Pardon in \cite{Par}, in this appendix we additionally show how to adapt the results of the author in \cite{F1} to establish the necessary nondegeneracy of orbits and transversality for all appearing moduli spaces. Since everything is only a mild generalization of the results from \cite{F1}, we only focus on the changes that need to be made, and refer for details to the detailed paper \cite{F1}. \\

We want to emphasize that, even after employing domain-dependent Hamiltonian perturbations, we still keep the monotonicity features for the Floer curves. For this we will assume that the Hamiltonian perturbations are in fact fixed (and hence domain-independent) outside a compact region containing the closed Hamiltonian orbits, so that a maximum principle still exists. On the other hand, the resulting class of perturbations is still large enough in order to achieve transversality via the Sard-Smale theorem, since the holomorphic curves by the maximum principle never leave this compact region.  Note that it is not possible to achieve transversality using domain-dependent almost complex structures $J$ on $M$, since the latter do not affect the orbit curves studied in \cite{F2}. \\   

For the discussion we have to distinguish between domain-stable holomorphic curves (the underlying punctured sphere is already stable in the sense that it has no nontrivial automorphisms, which means that it carries at least three punctures) and domain-unstable holomorphic curves like holomorphic spheres, holomorphic planes and holomorphic cylinders. \\

\noindent\emph{Holomorphic spheres, planes and cylinders}\\

First, it is a well-known result from Gromov-Witten theory, see \cite{MDSa}, that one can prove regularity for all appearing moduli spaces of holomorphic spheres when the underlying symplectic is semi-monotone. \\ 

As in \cite{F1} we observe next that there exist no holomorphic planes in $\IR\times M_{\phi}$, which simply follows from the fact that there is no branched covering map from the plane to the cylinder. \\

On the other hand, we have seen that the cylindrical contact homology complex of $M_{\phi}$ is precisely given by the sum of the Floer homology complexes for all powers $\phi^k$ of the Hamiltonian symplectomorphism $\phi$. It follows that the transversality problem for domain-unstable curves in SFT of (Hamiltonian) mapping tori reduces to the transversality results for symplectic Floer homology. \\

Apart from assuming monotonicity in order to be able to deal with bubbling-off of holomorphic spheres as described above, it is a classical result (see \cite{MDSa}), that one can prove nondegeneracy for all fixed points and transversality for all moduli spaces of cylinders when one considers for every $k\in\IN$ a sufficiently generic time-dependent Hamiltonian function $H=H^k: S^1\to M$. For this recall from proposition \ref{cylindrical} that the moduli space $\IM^{\gamma^+}_{\gamma^-}$ is only non-empty when $\gamma^+$ and $\gamma^-$ have the same period $k$ by homological reasons. In this case we have that $\tilde{u}=(h,u):\RS\to\IR\times M_{\phi}\cong\RS\times M$ is a $\tilde{J}$-holomorphic cylinder precisely when $h$ is a $k$-fold unbranched covering map from the cylinder onto itself and $u:\RS\to M$ satisfies the Floer equation $\CR_{J,H,k}(u)=\del_s u + J(u) \del_t u + \nabla H^k=0$ with the $1/k$-periodic Hamiltonian $H^k_t=kH_{kt}$, where $H:S^1\to M$ is the one-periodic Hamiltonian used to define the Hamiltonian symplectomorphism $\phi$. While the $1/k$-periodicity of $H^k$ leads to the existence of multiply-covered cylinders, the latter problem can easily be resolved by perturbing $H^k$ to a generic one-periodic Hamiltonian whose time-one map still agrees with $\phi^k$. Since the generic one-periodic perturbation of $H^k$ can be viewed as a $k$-valued perturbation of the original Hamiltonian $H$, it follows that the transversality for cylindrical contact homology follows from the classical transversality result for Hamiltonian Floer homology after one considers multi-valued perturbations depending on the period $k$ of the orbits. \\  

\noindent\emph{Domain-stable holomorphic curves}\\

It remains to prove transversality for holomorphic curves with three or more punctures. While these curves lead to the involved algebraic structures discussed in \cite{EGH} and in this paper, from the point of transversality they actually cause less problems (up to the compatibility problem with the choices for the other moduli spaces) than the domain-unstable holomorphic curves. Indeed, the latter are the reason why transversality is not proved for symplectic field theory in general, which in turn was the starting point for the polyfold project of Hofer, Wysocki and Zehnder. \\

Indeed it was shown in \cite{F1} that one can prove transversality for all moduli spaces of domain-stable holomorphic curves when one introduces domain-dependent Hamiltonian perturbations, generalizing the Hamiltonian perturbations used for the moduli spaces of holomorphic cylinders discussed above. Here one uses that the underlying punctured sphere has no nontrivial automorphisms, so that one can allow the Hamiltonian to depend on points of the punctured sphere, see \cite{F1} for details. Furthermore it was shown in \cite{F1} that the resulting class of perturbations is indeed large enough to prove transversality for generic choices and that all choices can be made coherent in the sense that they are compatible with compactness and gluing of moduli spaces, which also involves the moduli spaces of domain-unstable holomorphic curves. In order to have both the symmetry condition as well as regularity, we again need to consider multi-valued Hamiltonian perturbations which destroy all multiply-covered cylinders, see \cite{CMS} for the precise definitions.  \\     

While we claim that the main results carry over naturally, here is a short discussion of how the setup of \cite{F1} needs to be improved to cover the case of general Hamiltonian mapping tori. \\

First, since we now need to employ time-dependent Hamiltonians for the cylinders, we now can no longer work with the moduli space $\IM_{r+1}$ of punctured Riemann spheres. Instead we want to use that for every moduli space $\IM^{\gamma^+}(\Gamma)$ there exists a natural map to the moduli space $\IM(\vec{k})$ of holomorphic maps from $(r+1)$-punctured sphere to the cylinder, see the proof of proposition 2.2, viewed as the moduli space for full contact homology when the symplectic manifold is the point. Here $\vec{k}=(k_0,\ldots,k_{r-1})$ is the ordered set of periods of the orbits in $\Gamma$ and the map is defined by forgetting the map $u:\Si\to M$. In other words, it only remembers the conformal structure and, in contrast to the construction in \cite{F1}, also the asymptotic markers and hence the map $h$ to the cylinder (up to $\IR$-shift). \\

After introducing an unconstrained additional marked point, we obtain the corresponding universal curve $\IM_1(\vec{k})\to\IM(\vec{k})$. Generalizing the setup in \cite{F1}, we now define a domain-dependent Hamiltonian perturbation as a map $$H(\vec{k}): \IM_1(\vec{k})\to C^{\infty}(M).$$ The fibre over each point $j\in\IM(\vec{k})$ (which now stands for the conformal structure and the asymptotic markers) defines a Hamiltonian function which depends on points on the corresponding Riemann surface with cylindrical ends. \\

After extending the universal curve to the compactification of $\IM(\vec{k})$, note that the fibre is a compact Riemann surface with boundary circles. The resulting $S^1$-parametrization near each puncture will be viewed as time coordinate for the time-dependent Hamiltonian perturbation used to prove transversality for the corresponding cylinder. Note that every end automatically has a period assigned to it. Apart from the fact that the multi-valued Hamiltonian perturbations fix the domain-dependent Hamiltonian perturbations in the cylindrical ends (see \cite{F1}), we claim that the geometrical setup to define coherent domain-dependent Hamiltonians from \cite{F1} naturally extends from the classical Deligne-Mumford moduli space of punctured spheres $\IM_{r+1}$ to the new moduli space $\IM(\vec{k})$. \\

For this observe that it follows from the standard compactness result in \cite{BEHWZ} that the codimension-one boundary $\del^1\IM(\vec{k})$ of each moduli space has components of the form $\IM(\vec{k}_1)\times\IM(\vec{k}_2)$.  This in turn implies that the codimension-one boundary $\del^1\IM_1(\vec{k})$ of the universal curve has components of the form $\IM_1(\vec{k}_1)\times\IM(\vec{k}_2)$ and $\IM(\vec{k}_1)\times\IM_1(\vec{k}_2)$, depending on whether the additional marked points is on the upper or lower component. Then we require that 
\begin{eqnarray*} 
H(\vec{k})|_{\IM_1(\vec{k}_1)\times\IM(\vec{k}_2)}&=& H(\vec{k}_1)\circ\pi_1,\\
H(\vec{k})|_{\IM(\vec{k}_1)\times\IM_1(\vec{k}_2)}&=&H(\vec{k}_2)\circ\pi_2,
\end{eqnarray*} 
where $\pi_{1,2}$ denotes the projection on the first or second factor, respectively. In particular note that $\IM_1(\vec{k})$ can still be compactified to a smooth manifold with boundary and its boundary strata are moduli spaces of curves have a less number of punctures, so that the definition of coherent Hamiltonian perturbations from \cite{F1} generalizes in the obvious way. \\

We emphasize that the domain-dependent Hamiltonian perturbations defined in \cite{F1} arise as special case when each map $H(\vec{k}):\IM_1(\vec{k})\to C^{\infty}(M)$ is given by $H^{(r+1)}:\IM_{r+2}\to C^{\infty}(M)$ in the sense that it factors through the map $\ft: \IM_1(\vec{k})\to\IM_{r+2}$ forgetting the asymptotic markers and the multiplicities, $$H(\vec{k})=H^{(r+1)}\circ\ft:\, \IM_1(\vec{k})\to \IM_{r+2}\to C^{\infty}(M). $$ 
$ $\\
On the other hand, it immediately follows that the resulting class of Hamiltonian perturbations is still large enough to prove transversality for a generic choice, as can be seen easily from the proof in \cite{F1}. Note that now the universal moduli space is the zero set of the universal Cauchy-Riemann operator $$\CR:\;\BB^{\gamma^+}(\Gamma)\times\mathcal{H}(\vec{k})\to\EE^{\gamma^+}(\Gamma)$$  in the universal Banach space bundle $\EE^{\gamma^+}(\Gamma)$ over the universal Banach manifold $\BB^{\gamma^+}(\Gamma)\times\mathcal{H}(\vec{k})$, where $\mathcal{H}(\vec{k})$ is the space of maps from $\IM_1(\vec{k})$ to $C^{\infty}(M)$. Then it can be shown as in \cite{F1} that universal Cauchy-Riemann operator is surjective and hence we obtain regularity for generic choices by the Sard-Smale theorem as in the well-known transversality theorem for somewhere-injective curves. \\  

\noindent\emph{Forgetful map is a submersion}\\

By allowing the choice of domain-dependent Hamiltonian perturbation to vary in the space $\mathcal{H}=\mathcal{H}(\vec{k})$, let $\widetilde{\IM}^{\gamma^+}(\Gamma)$ denote the corresponding universal moduli space. In the last part of this appendix we want to show that the forgetful map from $\widetilde{\IM}^{\gamma^+}(\Gamma)$ to the underlying moduli space $\IM_{r+1}$ of $r+1$-punctured spheres is a submersion. Indeed we want to prove that this continues to hold for the universal moduli spaces $\widetilde{\IM}^{\gamma^+}_{\gamma_0,\gamma_1}(\Gamma)\subset \widetilde{\IM}^{\gamma^+}(\gamma_0,\gamma_1,\Gamma)$ over the submoduli spaces  $\IM^{\gamma^+}_{\gamma_0,\gamma_1}(\Gamma)\subset \IM^{\gamma^+}(\gamma_0,\gamma_1,\Gamma)$ used to define the big pair-of-pants product. While this would imply that the image of $\widetilde{\IM}^{\gamma^+}_{\gamma_0,\gamma_1}(\Gamma)$ under this forgetful map would meet every subvariety in $\IM_{r+1}$ transversally, the latter would indeed continue to hold for the moduli space $\IM^{\gamma^+}_{\gamma_0,\gamma_1}(\Gamma)$  itself for a generic choice of domain-dependent Hamiltonian perturbation by Sard's theorem. \\

In order to prove this result, it turns out to be easier to first slightly enlarge the space of perturbations. Motivated by the similar result in \cite{We}, from now on we do not only want to allow the Hamiltonian $H$ but also the compatible almost complex structure $J$ to vary. In complete analogy one can define a domain-dependent compatible almost complex structure as map $J(\vec{k}): \IM_1(\vec{k})\to \mathcal{J}(M,\omega)$, where $\mathcal{J}(M,\omega)$ denotes the space of compatible almost complex structures on $M=(M,\omega)$. In order to obtain coherent choices for all moduli spaces, note that we can proceed precisely as above. \\

For the resulting new universal moduli space, still denoted by $\widetilde{\IM}^{\gamma^+}_{\gamma_0,\gamma_1}(\Gamma)$, we do not only allow $H=H(\vec{k})$ but also $J=J(\vec{k})$ to vary in a coherent domain-dependent fashion, where we denote by $\mathcal{J}=\mathcal{J}(\vec{k})$ the corresponding space of domain-dependent almost complex structures. While just considering domain-dependent almost complex structures is not sufficient to prove transversality due to multiple covers of orbit cylinders, see \cite{F2}, we show that the forgetful map from the enlarged universal moduli space $\widetilde{\IM}^{\gamma^+}_{\gamma_0,\gamma_1}(\Gamma)$ to $\IM_{r+1}$ is indeed a submersion. \\

In order to prove this, let us fix $(h,u,j,J,H)\in\widetilde{\IM}^{\gamma^+}_{\gamma_0,\gamma_1}(\Gamma)$. We need to show that $$\forall y\in T_j\IM=T_j\IM_{r+1}\;\;\exists (\chi,\xi,y,Y,G)\in T_{(h,u,j,J,H)}\widetilde{\IM}^{\gamma^+}_{\gamma_0,\gamma_1}(\Gamma).$$ Note that, in the notation from \cite{F1}, this is equivalent to showing that for every $y\in T_j\IM$ there exist $\chi\in H^{1,p,d}_{\cst}(\Si,\IC)$, $\xi\in H^{1,p}(u^*TM)$, $Y\in T_J\mathcal{J}$ and $G\in T_H\mathcal{H}$ such that \begin{eqnarray*} D_{h,u,j,J,H}\cdot (\chi,\xi,y,Y,G) &=& (\CR\chi + D^1_j y,D_u\xi + D^2_j y + D_J Y + D_H G)\\&\in& L^{p,d}(\Lambda^{0,1})\otimes L^p(u^*TM).\end{eqnarray*} is equal to zero. \\

First, since for every choice of complex structure $j$ on the punctured Riemann surface $\Si$ there exists a branched covering map $h=(h_1,h_2)$ to the cylinder, it follows that for every $y\in T_j\IM$ there exists $\chi\in H^{1,p,d}_{\cst}(\Si,\IC)$ such that $\CR\chi=D^1_j y=i\cdot dh\cdot y$. Note that we use that this continues to hold true when we require that the additional marked point which is fixed by the three special punctures gets mapped to $0\in S^1$ under $h_2$. \\

On the other hand, by setting $\xi$ and $G$ equal to zero, for the second statement it suffices to show that there exists $Y\in T_J\mathcal{J}$ such that $Y\cdot du\cdot j = D_J Y = D^2_j y = J\cdot du\cdot y$. But this follows, as in \cite{We}, by observing that the latter is equivalent to $$Y(h,j,z,u(z))\cdot du(z) = du(z)\cdot y(z)$$ for all points $z\in\Si$. In particular, note that we do \emph{not} need to assume that $u$ is somewhere-injective as $Y$ can be chosen to depend not only on $u(z)$ but also on $h$, $j$ and $z$ itself.


\begin{thebibliography}{}

\bibitem{A} Abouzaid, M., Symplectic homology and Viterbo's theorem, in: J. Latschev and A. Oancea (eds.), Free Loop Spaces in Geometry and Topology. IRMA Lectures in Mathematics and Theoretical Physics Vol. 24, EMS (2015) 

\bibitem{BK} Barannikov, S., Kontsevich, M., Frobenius manifolds and formality of Lie algebras of polyvector fields, Int. Math. Res. Not. 1998, pp. 201-215 (1997)

\bibitem{BEHWZ} Bourgeois, F., Eliashberg, Y., Hofer, H., Wysocki, K. and Zehnder, E., Compactness results in 
      symplectic field theory, Geom. and Top. 7, pp. 799-888 (2003) 

\bibitem{BM} Bourgeois, F. and  Mohnke, K., Coherent orientations in symplectic field theory, Math. Z. 248 (2003)

\bibitem{BO} Bourgeois, F., Oancea, A., Symplectic homology, autonomous Hamiltonians, and Morse-Bott moduli spaces, Duke Math. J. 146, pp. 71-174 (2009)

\bibitem{CMS} Cieliebak, K., Mundet i Riera, I. and Salamon, D., Equivariant moduli problems, 
       branched manifolds, and the Euler class, Topology 42, pp. 641-700 (2003)

\bibitem{CFL} Cieliebak, K., Fukaya, K., and Latschev, J., Homological algebra related to surfaces with boundary, ArXiv preprint 1508.02741 (2015)

\bibitem{CS} Cattaneo, A. and Schaetz, F., Introduction to supergeometry, Rev. Math. Phys. 23, pp. 669-690 (2011)

\bibitem{CL} Cieliebak, K. and Latschev, J., The role of string topology in symplectic field theory,  New perspectives and challenges in symplectic field theory, CRM Proc. Lecture Notes 49, pp. 113-146 (2007)

\bibitem{DZ} Dubrovin, B., Zhang, Y, Normal forms of hierarchies of integrable PDEs, Frobenius manifolds and Gromov-Witten invariants, ArXiv preprint math.DG/0108160 (2001).

\bibitem{EGH} Eliashberg, Y., Givental, A., Hofer, H., Introduction to symplectic field theory, GAFA 2000 Visions in 
Mathematics special volume, part II, pp. 560-673, (2000)

\bibitem{F1} Fabert, O., Contact homology of Hamiltonian mapping tori, Comm. Math. Helv. 85, pp. 203-241 (2010)

\bibitem{F2} Fabert, O., Obstruction bundles over moduli spaces with boundary and the action filtration in symplectic field theory, Math. Z. 269, pp.325-372 (2011)

\bibitem{FOOO} Fukaya, K., Oh, Y., Ohta, H. and Ono, K., Lagrangian intersection Floer theory -  anomaly and obstruction, Part I, II, AMS/IP Studies in Advanced Mathematics, vol 46.1, 46.2, Amer. Math. Soc./International Press (2009)

\bibitem{H} Huybrechts, D., Complex Geometry: An Introduction, Springer

\bibitem{K} Kontsevich, M., Deformation quantization of Poisson manifolds, Lett. Math. Phys. 66, pp. 157-216 (2003)
 
\bibitem{M1} Merkulov, S., Frobenius$_{\infty}$ invariants of homotopy Gerstenhaber algebras I, Duke Math. J. 105, pp. 411-461 (2000)

\bibitem{M2} Merkulov, S., Operads, deformation theory and F-manifolds, in: C. Hertling and M. Marcolli (eds.), Frobenius manifolds: quantum homology and singularities. Vieweg, Wiesbaden (2004).

\bibitem{MDSa} McDuff, D. and Salamon, D., $J$-holomorphic curves and symplectic topology, AMS Colloquium 
Publications 52. AMS, Providence (2004)

\bibitem{McL} McLean, M., Symplectic homology of Lefschetz fibrations and Floer homology of the monodromy map, Sel. Math. New Ser. 18, pp. 473-512 (2012)

\bibitem{Par} Pardon, J., Contact homology and virtual fundamental cycles, Geom.and Top. 20, pp. 779-1034 (2016)

\bibitem{Pas} Pascaleff, J., On the symplectic homology of log Calabi-Yau surfaces, Arxiv preprint 1304.5298 (2013)

\bibitem{PSS} Piunikhin, S., Salamon, D. and Schwarz, M., Symplectic Floer-Donaldson theory and quantum homology, in: C.B. Thomas (ed.), Contact and Symplectic Geometry, pp. 171-200. Cambridge University Press, Cambridge (1996) 

\bibitem{R} Ritter, A., Topological quantum field theory structure on symplectic homology, J. Top. 6, pp. 391-489 (2013)

\bibitem{Se} Seidel, P., Fukaya categories and deformations, Arxiv preprint 0206155 (2002)

\bibitem{Sch} Schwarz, M., On the action spectrum for closed symplectically aspherical manifolds, Pacif. J. Math. 193, pp. 419-461 (2000)

\bibitem{We} Wendl, C., Contact Hypersurfaces in Uniruled Symplectic Manifolds Always Separate, J Lond. Math. Soc. 89, pp. 832-852 (2014)


\end{thebibliography}
\end{document}